\newcommand{\xycenter}[1]{
	\begin{center}
	\mbox{\xymatrix{#1}}
	\end{center}
	}
\theoremstyle{plain}
\newtheorem{theorem}{Theorem}[section]
\newtheorem{proposition}[theorem]{Proposition}
\newtheorem{corollary}[theorem]{Corollary}
\theoremstyle{definition}
\newtheorem{definition}[theorem]{Definition}
\newtheorem{example}[theorem]{Example}
\theoremstyle{remark}
\newtheorem{remark}[theorem]{Remark}
\newcommand{\ioUpper}[2]{\iota^*_{\{#1\}>\{#2\}}}
\newcommand{\ioLower}[2]{\iota_{\{#1\}>\{#2\}*}}
\newcommand{\sheaf}[1]{\mathscr{#1}}
\newcommand{\DD}{\sheaf{D}}
\newcommand{\LL}{\sheaf{L}}
\newcommand{\OO}{\sheaf{O}}
\newcommand{\NN}{\sheaf{N}}
\newcommand{\PP}{\sheaf{P}}
\newcommand{\VV}{\sheaf{V}}
\newcommand{\ZZ}{\sheaf{Z}}
\newcommand{\YY}{\sheaf{Y}}
\newcommand{\XX}{\sheaf{X}}
\newcommand{\Z}{\mathbb Z}
\newcommand{\N}{\mathbb N}
\newcommand{\A}{\mathbb A}
\newcommand{\R}{\mathbb R}
\newcommand{\C}{\mathbb C}
\renewcommand{\P}{\mathbb P}
\newcommand{\Q}{\mathbb Q}
\DeclareMathOperator{\coker}{\mathrm{coker}}
\DeclareMathOperator{\Chow}{\mathrm{CH}}
\DeclareMathOperator{\Chownum}{\mathrm{CH}_{\mathrm{num}}}
\DeclareMathOperator{\Chowprelog}{\mathrm{CH}^*_{\mathrm{prelog}}}
\DeclareMathOperator{\Chowprelognum}{\mathrm{Num}^*_{\mathrm{prelog}}}
\DeclareMathOperator{\Chowprelogsat}{\mathrm{Chow}_{\mathrm{prelog, sat}}}
\DeclareMathOperator{\Chowprelognotdominant}{\mathrm{Chow}_{\mathrm{prelog, nd}}}
\DeclareMathOperator{\Numsatprelog}{\mathrm{Num}_{\mathrm{prelog, sat}}}
\newcommand{\im}{\mathrm{im}}
\begin{document}

% \title[The $p$-torsion in the Brauer group and Chow zero universal triviality]{The $p$-torsion in the Brauer group as an obstruction to universal Chow zero triviality of varieties in characteristic $p$}

\title[Prelog Chow rings]{Prelog Chow rings and degenerations}

%\author[Auel]{Asher Auel}
%\address{Asher Auel, Department of Mathematics, Yale University\\
%New Haven, Connecticut 06511, USA}
%\email{asher.auel@yale.edu}

%\author[Bigazzi]{Alessandro Bigazzi}
%\address{Alessandro Bigazzi, Mathematics Institute, University of Warwick\\
%Coventry CV4 7AL, England}
%\email{A.Bigazzi@warwick.ac.uk}

\author[B\"ohning]{Christian B\"ohning}
\address{Christian B\"ohning, Mathematics Institute, University of Warwick\\
Coventry CV4 7AL, England}
\email{C.Boehning@warwick.ac.uk}

\author[von Bothmer]{Hans-Christian Graf von Bothmer}
\address{Hans-Christian Graf von Bothmer, Fachbereich Mathematik der Universit\"at Hamburg\\
Bundesstra\ss e 55\\
20146 Hamburg, Germany}
\email{hcvbothmer@gmail.com}

\author[van Garrel]{Michel van Garrel}
\address{Michel van Garrel, School of Mathematics, University of Birmingham\\
Birmingham B15 2TT, England}
\email{m.vangarrel@bham.ac.uk}

\date{\today}

% \subjclass[2010]{11E08, 11E20, 11E88, 14D06, 14F22, 14L35, 15A66, 16H05}

\begin{abstract}
For a simple normal crossing variety $X$, we introduce the concepts of prelog Chow ring, saturated prelog Chow group, as well as their counterparts for numerical equivalence. Thinking of $X$ as the central fibre in a (strictly) semistable degeneration, these objects can intuitively be thought of as consisting of cycle classes on $X$ for which some initial obstruction to arise as specializations of cycle classes on the generic fibre is absent. Cycle classes in the generic fibre specialize to their prelog counterparts in the central fibre, thus extending to Chow rings the method of studying smooth varieties via strictly semistable degenerations. After proving basic properties for prelog Chow rings and groups, we explain how they can be used in an envisaged further development of the degeneration method by Voisin et al. to prove stable irrationality of very general fibres of certain families of varieties; this extension would allow for much more singular degenerations, such as toric degenerations as occur in the Gross-Siebert programme, to be usable. 
We illustrate that by looking at the example of degenerations of elliptic curves, which, although simple, shows that our notion of prelog decomposition of the diagonal can also be used as an obstruction in cases where all components in a degeneration and their mutual intersections are rational.
 We also compute the saturated prelog Chow group of degenerations of cubic surfaces.
\end{abstract}

\maketitle

\tableofcontents

\section{Introduction}\label{sIntroduction}
 
Chow rings are intricate, important and hard to compute invariants of algebraic varieties. In this paper, we propose to study Chow rings by means of strictly semistable degenerations. Let $X_K$ be the smooth generic fibre of such a family and let $X$ be its special fibre.  

The first main result of this paper is that we construct in Definition \ref{dPrelogChow} the \emph{prelog Chow ring} $\Chowprelog (X)$ that admits (Theorem \ref{pSpecializationIntoChowPrelog}) a specialization homomorphism from the Chow ring of the generic fibre 
\[
\sigma : \Chow^* (X_K) \longrightarrow \Chowprelog (X).
\]

In Proposition \ref{pBothmer}, we derive a calculation scheme for $\Chowprelog (X)$ in terms of the Chow rings of the components of $X$ and their intersections. 

\

Our interest in the theory derives from the desire to extend Voisin's degeneration method \cite{Voi15} (as developed further by Colliot-Th\'{e}l\`{e}ne/Pirutka \cite{CT-P16}, Totaro \cite{To16}, Schreieder \cite{Schrei17,Schrei18} et al.) to more singular degenerations, for example toric degenerations used in the Gross-Siebert programme \cite{GS06, GS10, Gross11, GS11,GS11a, GS16,GS19}. 

The main idea on how to apply Voisin's method to strictly semistable degenerations can informally be described as follows: suppose $\XX \to \Delta$ is a degeneration of projective varieties $\XX_t$ over a small disk $\Delta$ centered at $0$ in $\C$ with coordinate $t$. Suppose $\XX_t$ is smooth for $t$ nonzero. Let $\XX^*\to \Delta^*$ be the induced family over the punctured disk obtained by removing the central fibre $\XX_0$. 
We would like to prove that a very general fibre $\XX_t$ is not retract rational (or weaker, not stably rational). Arguing by contradiction and assuming to the contrary that $\XX_t$ is stably rational, we obtain (cf. \cite[Proof of Thm. 1.1]{Voi15}) that, after replacing $t$ by $t^k$ for some positive integer $k$ and shrinking $\Delta$, there is a relative decomposition of the diagonal on $\XX^*\times_{\Delta^*}\XX^*\to \Delta^*$: there exists a section $\sigma\colon \Delta^* \to \XX^*$ and a relative cycle $\ZZ^* \subset \XX^*\times_{\Delta^*}\XX^*$ together with a relative divisor $\DD^* \subset \XX^*$ such that, for all $t\in \Delta^*$
\[
\Delta_{\XX_t} = \XX_t\times \sigma (t) + \ZZ^*_t \: \mathrm{in}\: \mathrm{CH}^* (\XX_t\times \XX_t)
\]
and $\ZZ^* \subset \DD^* \times_{\Delta*} \XX^*$. Closing everything up in $\XX\times_{\Delta}\XX$, and intersecting with $\XX_0\times\XX_0$, we can specialize this to obtain a decomposition of the diagonal on $\XX_0\times \XX_0$.

If the singularities of $\XX_0$ are sufficiently mild (e.g. only nodes, but more general classes of singularities are admissible), then a resolution $\tilde{\XX}_0$ inherits a decomposition of the diagonal. Now we can derive a contradiction to our initial assumption that $\XX_t$ is stably rational for $t$ very general since one can obstruct the existence of decompositions of the diagonal on nonsingular projective varieties by, for example, nonzero Brauer classes, and other unramified invariants. However, we can also try to bypass the necessity that $\XX_0$ have mild singularities if we are willing to obstruct \emph{directly the existence of a decomposition of the diagonal on $\XX_0\times \XX_0$ that arises as a ``limit" of decompositions of the diagonal on the fibres of $\XX^*\times_{\Delta^*}\XX^*\to \Delta^*$}. Here the qualifying relative clause is important: if the degeneration $\XX_0$ is sufficiently drastic, e.g. $\XX_0$ could be simple normal crossing with toric components, then decompositions of the diagonal may well exist, but still possibly none that arise as a limit in the way described above. 

The desire to single out decompositions of the diagonal for $\XX_0$ that would stand a chance to arise via such a limiting procedure naturally leads to the idea of endowing $\XX_0$ with its natural log structure $\XX_0^{\dagger}$ (cf. \cite[Chapter 3.2]{Gross11}) obtained by restricting the divisorial log structure for $(\XX, \XX_0)$ to $\XX_0$, and develop some kind of log Chow theory on $\XX_0^{\dagger}$ in which cycles carry some extra decoration that encodes information about the way they can arise as limits. Although a promising theory that may eventually lead to a realization of this hope is currently being developed by Barrott \cite{Bar18}, it is still at this point unclear to us if and how it can be used for our purposes, so we take a more pedestrian approach in this article, always keeping the envisaged geometric applications in view.

While we believe that the correct framework is ultimately likely to be the general theory of log structures in the sense of Deligne, Faltings, Fontaine, Illusie, Kato et al. as exposed in \cite{Og18}, our point of departure in Section \ref{sPrelogSNC} is the following simple observation: given a strictly semistable degeneration, cycle classes in $\Chow_* (\XX_0)$ that arise as specializations in fact come from cycle classes on the normalization of $\XX_0$ satisfying an obvious coherence/compatibility condition that we call the prelog condition following Nishinou and Siebert \cite{NiSi06,Ni15} (in the case of curves). Despite its simplicity, the idea is very effective in applications. We cast it in the appropriate algebraic structures in Section \ref{sPrelogSNC}. This takes the shape of the prelog Chow ring of a simple normal crossing scheme and its cousin the numerical prelog Chow ring, which is easier to handle computationally. We also discuss a relation between the prelog condition and the Friedman condition. %$d$-semistability in Friedman's sense. 

In Section \ref{sSpecializations}, we treat specialization homomorphisms of strictly semistable degenerations into prelog Chow rings, and in Section \ref{sBaseChange} we deal with the problems that arise by the necessity to perform a ramified base change $t\mapsto t^k$ in Voisin's method outlined above. This leads to the concepts of saturated prelog Chow groups and their numerical counterparts. 

In Section \ref{sDecDiag} we define the concept of prelog decomposition of the diagonal and prove an analogue of Voisin's specialisation result in this case. 

In Section \ref{sExampleCubic} we compute the saturated prelog Chow group of a degeneration of cubic surfaces. We explicitly recover the 27 lines  as prelog cycles in the central fibre.

In Section \ref{sEllipticCurves}, we consider the case of a degeneration of a family of elliptic curves, realized as plane cubics, into a triangle of lines. We show that the central fibre cannot have a prelog decomposition of the diagonal, showing in particular again that a smooth elliptic curve does not have universally trivial Chow group of zero cycles. This is, although well-known, very reassuring because it shows that the concept of prelog decomposition of the diagonal is also a useful obstruction when dealing with degenerations all of whose components and mutual intersections of components are rational.

In Appendix \ref{sTropicalDegenerations} we explain some connections to the Gross-Siebert programme. 

Let us also mention that the concepts of prelog Chow rings and groups we introduce are eminently computable in even nontrivial examples such as degenerations of self-products of cubic threefolds, compare \cite{BBG19}.

Ideas related to the ones proposed in this article have been pursued in \cite{FM81, BGS94, NiShi19, NiOt19, Schrei19}, but we do not see how these could yield results for degenerations all of whose components and mutual intersections of components are rational.

\section*{Acknowledgement}

We thank Lawrence Barrott, Jean-Louis Colliot-Th\'el\`ene, Alessio Corti, Mathieu Florence, Mark Gross, Stefan Schreieder and Claire Voisin for valuable discussions surrounding the ideas presented in this paper. This project has received funding from the European Union's Horizon 2020 research and innovation programme under the Marie Sklodowska-Curie grant agreement No 746554 and has been supported by Dr.\ Max R\"ossler, the Walter Haefner Foundation and the ETH Z\"urich Foundation. The first author would like to acknowledge the stimulating environment and discussions of the 2019 AIM Workshop Rationality Problems in Algebraic Geometry that were helpful for coming to terms with various aspects of the present paper.
The first author was supported by the EPSRC New Horizons Grant EP/V047299/1.

\section{Prelog Chow rings of simple normal crossing schemes}\label{sPrelogSNC}
We work over the complex numbers $\C$ throughout. 

Let $X=\bigcup_{i\in I} X_i$ be a simple normal crossing (snc) scheme; here $I$ is some finite set, and all irreducible components $X_i$ are smooth varieties. Moreover, for a nonempty subset $J \subset I$, we denote by $X_J$ the intersection $\bigcap_{j\in J} X_j \subset X$. In this way, each $X_J$ is a smooth variety (possibly not connected). The irreducible components of $X_J$ then form the $(|J|-1)$-dimensional cells in the dual intersection complex of $X$. It is a regular cell complex in general, and simplicial if and only if all $X_J$ are irreducible. For nonempty subsets $J_1\subset J_2$ of $I$, we denote by
\[
\iota_{ J_2 > J_1}\colon X_{J_2} \hookrightarrow X_{J_1}
\]
and by
\[
\iota_J \colon X_J \hookrightarrow X
\]
the inclusions. Let
\[
\nu \colon X^{\nu} = \bigsqcup_{i\in I} X_i \to X
\]
be the normalization. 

\begin{definition}\label{dRingOfCompatibleClasses}
Denote by 
\[
R (X)=R \subset \Chow^* (X^{\nu}) = \bigoplus_{i\in I} \Chow^* (X_i)
\]
the following subring of the Chow ring of the normalization, which we call the \emph{ring of compatible classes}: elements in $R$ are tuples of classes $(\alpha_i)_{i\in I}$ with the property that for any two element subset $\{ j,k\} \subset I$
\[
\iota^*_{\{ j,k\} > \{j\}} (\alpha_j) = \iota^*_{\{ j,k\} > \{k\}} (\alpha_k).
\]
We call this property the \emph{prelog condition}. 
Furthermore, denote by 
\[
M = M(X)  = \Chow_* (X)
\]
the Chow group of $X$. 
\end{definition}

Notice that there is in general no well-defined intersection product on $M$. We will show, however, that one can turn $M$ into an $R$-module. We note that $R(X)$ agrees with Fulton-MacPherson's \emph{operational Chow ring}  \cite{FM81}.

\begin{definition}\label{dPairing}
Let $\alpha=(\alpha_i)_{i\in I}$ be an element in $R$ and let $Z$ be a prime cycle (=irreducible subvariety) on $X$. Let $J \subset I$ be the largest subset such that $Z \subset X_J$ and let $j\in J$ be arbitrary. 
Then we define 
\[
\langle \alpha , Z\rangle := \iota_{J, *} \left( \iota^*_{J > \{ j \}}(\alpha_j) . [Z] \right) .
\]
This is independent of the choice of $j$ since if $j'$ is another element of $J$, the class $\iota^*_{J > \{ j' \}}(\alpha_{j'})$ is the same as $\iota^*_{J > \{ j \}}(\alpha_j)$ since by the definition of $R$  we have that $\iota^*_{\{j, j'\} > \{ j\}}(\alpha_j) = \iota^*_{\{j, j'\} > \{ j'\}}(\alpha_{j'})$, so the definition is well-posed. 
If $Z$ is an arbitrary cycle on $X$, we define $\langle \alpha , Z\rangle$ by linearity. 
\end{definition}

\begin{proposition}\label{pIntersectionPairingPrelog}
If $Z_1$ and $Z_2$ are rationally equivalent cycles on $X$, then
\[
\langle \alpha , Z_1\rangle = \langle \alpha , Z_2\rangle .
\]
In particular, the pairing descends to rational equivalence on $X$ and makes $M=\Chow_* (X)$ into an $R$-module. The push forward map induced by the normalization
\[
\nu_*\mid_{R(X)} \colon R=R(X) \to M
\]
is an $R$-module homomorphism. Indeed, $\nu_* \colon \bigoplus_i \Chow_* (X_i) \to \Chow_* (X)$ is an $R$-module homomorphism.
\end{proposition}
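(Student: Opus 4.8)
The plan is to reduce every assertion to intersection theory on a single smooth stratum $X_J$, which is legitimate precisely because each $X_J$ is smooth. The device that makes this possible is a reformulation of the pairing that is \emph{uniform} across all strata containing a given cycle. For nonempty $J\subset I$ and any $j\in J$, set $\beta_J:=\iota^*_{J>\{j\}}(\alpha_j)\in\Chow^*(X_J)$; exactly as in Definition \ref{dPairing}, the prelog condition makes $\beta_J$ independent of the chosen $j$. First I would record the compatibility of these classes under further restriction: if $J\subset J'$, then choosing $j\in J\subset J'$ and using functoriality of pullback along $X_{J'}\hookrightarrow X_J\hookrightarrow X_{\{j\}}$ gives
\[
\iota^*_{J'>J}(\beta_J)=\iota^*_{J'>J}\iota^*_{J>\{j\}}(\alpha_j)=\iota^*_{J'>\{j\}}(\alpha_j)=\beta_{J'}.
\]

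The crux is the following reformulation, which I expect to be the main obstacle since the pairing is defined stratum-by-stratum. For a prime cycle $Z$ let $J_Z$ be the largest subset with $Z\subset X_{J_Z}$. I claim that for \emph{every} nonempty $J$ with $Z\subset X_J$ (equivalently $J\subset J_Z$),
\[
\langle\alpha,Z\rangle=\iota_{J,*}\bigl(\beta_J\,.\,[Z]_{X_J}\bigr),
\]
where $[Z]_{X_J}\in\Chow_*(X_J)$ is the class of $Z$ viewed as a subvariety of $X_J$. By definition $\langle\alpha,Z\rangle=\iota_{J_Z,*}(\beta_{J_Z}.[Z]_{X_{J_Z}})$; factoring $\iota_{J_Z}=\iota_J\circ\iota_{J_Z>J}$, substituting $\beta_{J_Z}=\iota^*_{J_Z>J}(\beta_J)$ from the compatibility above, and applying the projection formula for the closed immersion $\iota_{J_Z>J}$ of smooth varieties yields
\[
\iota_{J_Z>J,*}\bigl(\iota^*_{J_Z>J}(\beta_J).[Z]_{X_{J_Z}}\bigr)=\beta_J\,.\,\iota_{J_Z>J,*}[Z]_{X_{J_Z}}=\beta_J\,.\,[Z]_{X_J};
\]
pushing forward by $\iota_{J,*}$ gives the claim. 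It is exactly this projection-formula identity, powered by the compatibility of the $\beta_J$ (hence by the prelog condition), that frees the value $\langle\alpha,Z\rangle$ from the stratum $J_Z$ and lets us compute it on any ambient smooth $X_J$.

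With this in hand, well-definedness on rational equivalence is immediate. It suffices to treat a generator: an irreducible subvariety $W\subset X$ and $f\in k(W)^*$, giving the cycle $\divisor_W(f)$. Since $X$ is snc and $W$ is irreducible, $W\subset X_J$ for $J:=J_W$, and every component $V$ of $\divisor_W(f)$ satisfies $V\subset W\subset X_J$. Applying the reformulation with this single fixed $J$ to each $V$ and using linearity,
\[
\langle\alpha,\divisor_W(f)\rangle=\iota_{J,*}\bigl(\beta_J\,.\,[\divisor_W(f)]_{X_J}\bigr).
\]
Now $[\divisor_W(f)]_{X_J}=0$ in $\Chow_*(X_J)$ by the very definition of rational equivalence on $X_J$, and because $X_J$ is smooth the cap product $\Chow^*(X_J)\otimes\Chow_*(X_J)\to\Chow_*(X_J)$ descends to rational-equivalence classes; hence the right-hand side vanishes. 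Thus the pairing factors through $M=\Chow_*(X)$.

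Finally I would check the module axioms and the $R$-linearity of $\nu_*$. Biadditivity is clear, and $1=(1,\dots,1)$ acts trivially because $\beta_J=1$ gives $\langle 1,Z\rangle=\iota_{J,*}[Z]_{X_J}=[Z]$. For associativity I would first extend the reformulation by linearity to: for any $\gamma\in\Chow_*(X_J)$, one has $\langle\alpha,\iota_{J,*}\gamma\rangle=\iota_{J,*}(\beta_J.\gamma)$. Then for a prime cycle $Z$ with $J=J_Z$, writing $\beta'_J$ for the class attached to $\alpha'$ and using that pullback is a ring map so $\beta_J\beta'_J$ is the class attached to $\alpha\alpha'$,
\[
\alpha.(\alpha'.[Z])=\langle\alpha,\iota_{J,*}(\beta'_J.[Z]_{X_J})\rangle=\iota_{J,*}\bigl(\beta_J.\beta'_J.[Z]_{X_J}\bigr)=\langle\alpha\alpha',Z\rangle,
\]
the middle step using associativity of the intersection product on the smooth $X_J$. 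For the normalization, the $R$-action on $\bigoplus_i\Chow_*(X_i)=\Chow_*(X^\nu)$ is the componentwise cap product $\alpha.(\gamma_i)_i=(\alpha_i.\gamma_i)_i$; applying the extended reformulation with $J=\{i\}$, where $\beta_{\{i\}}=\alpha_i$, gives $\langle\alpha,\iota_{\{i\},*}\gamma_i\rangle=\iota_{\{i\},*}(\alpha_i.\gamma_i)$, and summing over $i$ yields $\nu_*(\alpha.(\gamma_i)_i)=\alpha.\nu_*((\gamma_i)_i)$. Restricting to $R$ (via Poincaré duality on the smooth $X_i$) gives the stated homomorphism $\nu_*\mid_{R(X)}$.
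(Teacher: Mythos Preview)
Your proof is correct and follows essentially the same route as the paper's. Both arguments hinge on the projection formula to show that $\langle\alpha,Z\rangle$ can be computed on any smooth stratum containing $Z$, and then exploit that a generator of rational equivalence on $X$ comes from an irreducible $W$ lying in some smooth stratum, where the intersection product respects rational equivalence. The only cosmetic difference is that the paper pushes all the way down to a single component $X_i$ (the case $J=\{i\}$ of your reformulation) and phrases the flexibility as ``write $Z=\sum_i Z_i$ with $Z_i$ supported on $X_i$, then $\langle\alpha,Z\rangle=\sum_i\iota_{\{i\},*}(\alpha_i.Z_i)$'', whereas you allow any $X_J$; you also spell out the unit and associativity axioms more explicitly than the paper does.
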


\begin{proof}
The main point is the following consequence of the projection formula that gives a way to calculate the pairing $\langle \alpha , Z \rangle$ in a very flexible way: if $Z$ is a prime cycle contained in $X_J$ as in Definition \ref{dPairing}, then we can write
\begin{gather*}
\langle \alpha , Z\rangle = \iota_{J, *} \left( \iota^*_{J > \{ j \}}(\alpha_j) . [Z] \right) = \iota_{\{ j\} , *}\iota_{J> \{ j\}, *} \left( \iota^*_{J > \{ j \}}(\alpha_j) . [Z] \right) \\
=\iota_{\{ j\} , *} \left( \alpha_j . \iota_{J> \{ j\}, *}[Z] \right) .
\end{gather*}
Moreover, as already remarked in Definition \ref{dPairing}, here $j\in J$ is arbitrary and the result independent of it. Hence, for an arbitrary cycle $Z$ on $X$ and $\alpha\in R$, we can compute $\langle \alpha , Z\rangle$ as follows: first we write, in whichever way we like,  
\[
Z = \sum_{i\in I} Z_i
\]
where $Z_i$ is a cycle supported on $X_i$, then form the intersection products $\alpha_i . Z_i$ on $X_i$, push these forward to $X$ and sum to get the cycle $\langle \alpha , Z\rangle$. In particular, this makes it clear that if $Z_1$ and $Z_2$ are rationally equivalent on $X$, then $\langle \alpha , Z_1\rangle = \langle \alpha , Z_2\rangle$ as elements of $M$. Indeed, cycles of dimension $d$ rationally equivalent to zero are sums of cycles $T$ on $X$ that arise as follows: take an irreducible subvariety $Y \subset X$ of dimension $d+1$, its normalization $\nu_Y \colon Y^{\nu} \to Y$, and let $T$ be $\nu_{Y, *}$ of the divisor of zeros and poles of a rational function on $Y^{\nu}$. Now $Y$ is necessarily contained entirely within one of the irreducible components of $X$, $X_i$ say, and hence $T$ is rationally equivalent to zero on $X_i$. Hence $\langle \alpha , T\rangle =0$ since we have the flexibility to compute this entirely on $X_i$.

To show that $\nu_* \colon \bigoplus_i \Chow_* (X_i) \to M$ is an $R$-module homomorphism, we take two elements $\alpha = (\alpha_i)_{i\in I}\in R, \beta = (\beta_i)_{i\in I} \in \bigoplus_i \Chow_* (X_i)$ and represent $\beta_i$ by a cycle $Z_i$ on $X_i$. All we have to show then is that
\[
\nu_* (\alpha \beta ) = \langle \alpha , Z \rangle, \quad Z := \sum_{i\in I} \iota_{\{ i\}, *}(Z_i) .
\]
But this directly follows from the definition of $\nu_*$ and the way we can compute the pairing. 
\end{proof}

\begin{definition}\label{dPrelogChow}
We call the quotient $R(X)/(\mathrm{ker}\, \nu_*\mid_{R(X)})$ the prelog Chow ring of $X$, and denote it by $\Chowprelog (X)$ when graded by codimension and by $\Chow^{\rm prelog}_*(X)$ when graded by dimension. It is indeed naturally a ring since $\mathrm{ker}\,\nu_*\mid_{R(X)}$ is an ideal in $R(X)$. 
\end{definition}

\begin{proposition}\label{pKernelNu}
There is an exact sequence
\xycenter{
\bigoplus \Chow_*(X_{ij})  \ar[r]^\delta
&\bigoplus \Chow_*(X_{i})  \ar[r]^-{\nu_*} 
&\Chow_*(X) \ar[r]
& 0 
}
where for $z_{ij} \in  \Chow_*(X_{ij})$ with $i<j$ 
\begin{align*}
	\bigl(\delta(z_{ij})\bigr)_a 	
	&= \left\{ \begin{matrix}  
	     \ioLower{ij}{i}(z_{ij}) & \text{if $a=i$,} \\
	     -\ioLower{ij}{j}(z_{ij}) & \text{if $a=j$,} \\
	     0 & \text{otherwise.}
	\end{matrix} \right.
\end{align*}
\end{proposition}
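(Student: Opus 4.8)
The plan is to verify the three defining properties of the asserted sequence separately: surjectivity of $\nu_*$, the inclusion $\im\delta\subseteq\ker\nu_*$, and the reverse inclusion $\ker\nu_*\subseteq\im\delta$. The first two are formal. For surjectivity, every prime cycle $Z$ on $X$ is contained in at least one component $X_i$, so $[Z]=\nu_*[Z]$ with $[Z]$ regarded on $X_i$; hence the generators of $\Chow_*(X)$ lie in the image. For $\nu_*\circ\delta=0$, observe that $\iota_{\{i\}}\circ\iota_{\{i,j\}>\{i\}}=\iota_{\{i,j\}}=\iota_{\{j\}}\circ\iota_{\{i,j\}>\{j\}}$ as maps $X_{ij}\hookrightarrow X$, so applying $\nu_*$ to $\delta(z_{ij})$ yields $\iota_{\{i,j\}*}(z_{ij})-\iota_{\{i,j\}*}(z_{ij})=0$.

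The substance is the inclusion $\ker\nu_*\subseteq\im\delta$, which I would prove in two stages. First I would establish the analogous exactness at the level of cycle groups $Z_*$: that $\sum_i B_i=0$ in $Z_*(X)$ for cycles $B_i$ on $X_i$ forces $(B_i)=\delta((z_{ij}))$ for suitable cycles $z_{ij}$ on the $X_{ij}$. Fix a prime cycle $Z$ on $X$ and let $J\subseteq I$ be the maximal subset with $Z\subseteq X_J$. Each pushforward $\iota_{\{i\}*}$ with $i\in J$ carries $[Z]$ to $[Z]$ with multiplicity one, so the coefficient of $Z$ in $\sum_i B_i$ is $\sum_{i\in J}n_i$, where $n_i$ is the coefficient of $Z$ in $B_i$; the hypothesis gives $\sum_{i\in J}n_i=0$. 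Viewing $Z$ as a subvariety of each $X_{ij}$ with $\{i,j\}\subseteq J$ (via $X_J\hookrightarrow X_{ij}$), I must choose multiplicities $c_{ij}$ with $\sum_{j>i}c_{ij}-\sum_{j<i}c_{ji}=n_i$ for every $i\in J$. This is exactly the problem of writing the sum-zero vector $(n_i)_{i\in J}$ as the boundary of a $1$-chain on the complete graph with vertex set $J$, oriented so that the edge $\{i,j\}$ with $i<j$ has boundary $\langle i\rangle-\langle j\rangle$; since that graph is connected, such an integral $1$-chain exists. Assembling the cycles $c_{ij}[Z]$ over all $Z$ produces $(z_{ij})$ with $\delta((z_{ij}))=(B_i)$.

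Second I would descend to rational equivalence. Given $(\beta_i)\in\ker\nu_*$, represent $\beta_i$ by a cycle $B_i$ on $X_i$; then $\sum_i B_i\sim 0$ on $X$. Here I would reuse the key observation from the proof of Proposition \ref{pIntersectionPairingPrelog}: any $(d{+}1)$-dimensional subvariety $Y\subseteq X$ carrying a rational function whose divisor enters the rational equivalence lies entirely in a single component $X_i$. Grouping these divisors by component lets me write $\sum_i B_i=\sum_i R_i$ as an identity of cycles, where each $R_i$ is a cycle on $X_i$ that is rationally trivial on $X_i$. Replacing $B_i$ by $B_i-R_i$ leaves $\beta_i=[B_i]$ unchanged, yet now $\sum_i(B_i-R_i)=0$ in $Z_*(X)$. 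The cycle-level statement then gives $(B_i-R_i)=\delta((z_{ij}))$, and passing to rational-equivalence classes — legitimate because $\delta$ is assembled from the proper pushforwards $\ioLower{i,j}{i}$ — yields $(\beta_i)=\delta(([z_{ij}]))$.

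The hard part is the descent in the second stage rather than the combinatorics of the first: everything hinges on the fact that rational equivalences on the reducible scheme $X$ do not mix its components, so that the failure of $\sum_i B_i$ to vanish as a cycle can be corrected component by component using classes already trivial in each $\Chow_*(X_i)$. The combinatorial realization is routine once set up, and triple and higher intersection loci cause no difficulty, since the complete graph on any index set $J$ with $|J|\ge 2$ is connected and the relevant pushforwards along closed immersions preserve multiplicities.
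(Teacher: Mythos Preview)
Your proof is correct but takes a genuinely different route from the paper's. The paper simply invokes \cite[Ex.~1.8.1]{Ful98}---the exact sequence $\Chow_k Y' \to \Chow_k Y \oplus \Chow_k Z' \to \Chow_k Z \to 0$ associated to a proper map $p\colon Z'\to Z$ that is an isomorphism away from a closed subscheme $Y\hookrightarrow Z$---and applies it inductively, peeling off one component $X_i$ at a time. You instead give a direct, self-contained argument: a combinatorial lemma at the cycle level (writing a sum-zero integer vector on the vertex set $J$ as the boundary of a $1$-chain on the complete graph), followed by a descent to rational equivalence using the fact that each $(d{+}1)$-dimensional subvariety of $X$ lies in a single component. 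Your approach is more elementary and makes the mechanism fully explicit; it also avoids a minor wrinkle hidden in the inductive approach, namely that the fibre $Y'=X_i\cap\bigl(\bigcup_{j\neq i}X_j\bigr)$ is not a disjoint union of the $X_{ij}$, so one still needs a surjection $\bigoplus_{j\neq i}\Chow_*(X_{ij})\twoheadrightarrow\Chow_*(Y')$ to finish. The paper's approach, on the other hand, is much shorter and situates the statement within Fulton's general framework, which is useful conceptually. Both arguments ultimately rest on the same structural fact you isolate in your second stage: rational equivalences on $X$ do not mix components.
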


\begin{proof}
We recall that by \cite[Ex. 1.8.1]{Ful98} if 
\[
\xymatrix{
Y' \ar[r]^j \ar[d]^q & Z' \ar[d]^{p}\\
Y \ar[r]^i & Z
}
\]
is a fibre square, with $i$ a closed embedding, $p$ proper, such that $p$ induces an isomorphism of $Z' - Y'$ onto $Z- Y$, then: there is an exact sequence
\[
\xymatrix{
\Chow_k Y' \ar[r]^{a\quad\quad} & \Chow_k Y \oplus \Chow_k Z' \ar[r]^{\quad\quad b} & \Chow_k Z \ar[r] & 0
}
\] 
where $a (\alpha ) = (q_* \alpha , \: -j_* \alpha), \: b(\alpha, \beta ) = i_* \alpha + p_* \beta$. 
Apply this inductively with $Y$ one component of an snc scheme and $Z'$ all remaining components. 
\end{proof}

\begin{example}\label{eP2F1}
Consider the snc scheme $X$ obtained by gluing $\P^2$ and the Hirzebruch surface $\mathbb{F}_1$ along a line $L$ in $\P^2$ identified with the $(-1)$-section of $\mathbb{F}_1$. Then $\Chow_1(X)$ is of rank 2 generated by the class of $L$ and the fibre class $F$ of $\mathbb{F}_1$.  $\Chow^{\rm prelog}_{1} X$ on the other hand is of rank 1 generated by the class $(L,F)$.

\end{example}

\begin{definition}\label{dFriedman}
Let $X$ be a snc scheme with at worst triple intersections. %{\color{red} can we drop the assumption at worst triple intersections?}.
We say that $X$ satisfies the \emph{Friedman condition} if for every intersection $X_{ij} = X_i\cap X_j$ we have
\[
\NN_{X_{ij}/X_i}\otimes \NN_{X_{ij}/X_j} \otimes \OO (T) = \OO_{X_{ij}}.
\]
Here $T$ is the union of all triple intersections $X_{ijk}$ that are contained in $X_{ij}$. The Friedman condition is commonly also referred to as $d$-semistability.
\end{definition}

\begin{remark}\label{rFriedman}
By \cite[Def. 1.9 and Cor. 1.12]{Fried83} any $X$ that is smoothable with smooth total space has trivial infinitesimal normal bundle and in particular satisfies the Friedman condition. 
\end{remark}

The following Proposition describes a relation between the prelog condition, Friedman condition and Fulton's description of the kernel of $\nu_*$.

\begin{proposition}\label{pBothmer}
Let $X$ be an snc scheme that has at worst triple intersections and satisfies the Friedman condition. Then the following diagram commutes
 \xycenter{
  & 0 \ar[d] & 0 \ar[d] & \\
  & R(X) \ar[d] \ar[r] & \Chowprelog(X) \ar[d] \ar[r] & 0 \\
\bigoplus \Chow^*(X_{ij}) \ar[d]^-{\rho'} \ar[r]^\delta
&\bigoplus \Chow^*(X_{i})  \ar[r]^-{\nu_*} \ar[d]^-\rho
&\Chow_*(X) \ar[r]
& 0 \\
\bigoplus \Chow^*(X_{ijk}) \ar[r]^-{\delta'}
&\bigoplus \Chow^*(X_{ij})
 }
 
\newcommand{\II}{\{1,\dots,n\}}

Here the maps $\rho , \rho', \delta'$ are defined as follows, using the convention $a<b<c$, $i<j<k$:

\begin{align*}
	\bigl(\rho(z_{i})\bigr)_{ab} 	
	&= \left\{ \begin{matrix}  
	     \ioUpper{ab}{i}(z_{i}) & \text{if $i=a$} \\
	     -\ioUpper{ab}{i}(z_{i}) & \text{if $i=b$} \\
	     0 & \text{otherwise}
	\end{matrix} \right. \\
	\bigl(\rho'(z_{ij})\bigr)_{abc} 	
	&= \left\{ \begin{matrix}  
	     \ioUpper{abc}{ij}(z_{ij}) & \text{if $(i,j)=(a,b)$} \\
	     -\ioUpper{abc}{ij}(z_{ij}) & \text{if $(i,j)=(a,c)$} \\
	     \ioUpper{abc}{ij}(z_{ij}) & \text{if $(i,j)=(b,c)$} \\
	     0 & \text{otherwise}
	\end{matrix} \right. \\
	\bigl(\delta'(z_{ijk})\bigr)_{ab} 	
	&= \left\{ \begin{matrix}  
	     -\ioLower{ijk}{ab}(z_{ijk}) & \text{if $(a,b)=(i,j)$} \\
	     \ioLower{ijk}{ab}(z_{ijk}) & \text{if $(a,b)=(i,k)$} \\
	     -\ioLower{ijk}{ab}(z_{ijk}) & \text{if $(a,b)=(j,k)$} \\
	     0 & \text{otherwise}
	\end{matrix} \right. 
\end{align*}
 Notice that being in the kernel of $\rho$ amounts to the prelog condition.
 \end{proposition}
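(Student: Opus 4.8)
The plan is to dispose first of the formal parts and isolate the one square where something must actually be proved. The two columns and the upper square carry no real content: the left column is exact because, by the final remark, a tuple $(z_i)_i$ lies in $\ker\rho$ exactly when $\ioUpper{ab}{a}(z_a)=\ioUpper{ab}{b}(z_b)$ for all $a<b$, which is the prelog condition, so $R(X)=\ker\rho$; the right column is exact because $\Chowprelog(X)=R(X)/(\ker\nu_*\mid_{R(X)})$ injects into $M=\Chow_*(X)$ by construction; the upper square commutes because the composite $R(X)\to\Chowprelog(X)\to M$ is by definition $\nu_*\mid_{R(X)}$; and the middle row is the exact sequence of Proposition \ref{pKernelNu}. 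So the only substantive assertion is that the lower left square commutes, namely $\rho\circ\delta=\delta'\circ\rho'$, and it is precisely here that the Friedman condition is consumed. I would reduce this to checking the identity on a generator $z_{ij}\in\Chow^*(X_{ij})$ with $i<j$, comparing the two composites component by component in $\bigoplus\Chow^*(X_{ab})$. Since $\delta(z_{ij})$ is supported on the components $i,j$ and $\rho'(z_{ij})$ on the triples containing $\{i,j\}$, both $(\rho\delta z_{ij})_{ab}$ and $(\delta'\rho' z_{ij})_{ab}$ vanish unless $\{a,b\}$ meets $\{i,j\}$, and every nonzero off-diagonal contribution runs through a single third index $k$ with $X_{ijk}\neq\emptyset$.

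For an off-diagonal pair, say $\{a,b\}=\{i,k\}$ with $k\neq j$, the relevant term of $\rho\delta$ is $\ioUpper{ik}{i}\ioLower{ij}{i}(z_{ij})$. The two divisors $X_{ij},X_{ik}\subset X_i$ form a transverse (Tor-independent) fibre square with intersection $X_{ijk}$, so the clean base-change identity $\ioUpper{ik}{i}\ioLower{ij}{i}=\ioLower{ijk}{ik}\ioUpper{ijk}{ij}$ applies and rewrites this term as $\ioLower{ijk}{ik}\ioUpper{ijk}{ij}(z_{ij})$. Tracking the signs dictated by the orderings $a<b$ and $i<j<k$ in the definitions of $\rho,\rho',\delta,\delta'$, I would check case by case, according to whether $k<i$, $i<k<j$ or $k>j$, that this equals the corresponding term of $\delta'\rho'$. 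No hypothesis on normal bundles enters here; these components match by the functoriality of proper pushforward and Gysin pullback alone.

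The diagonal component $\{a,b\}=\{i,j\}$ is the crux. Here $(\rho\delta z_{ij})_{ij}=\ioUpper{ij}{i}\ioLower{ij}{i}(z_{ij})+\ioUpper{ij}{j}\ioLower{ij}{j}(z_{ij})$, and the self-intersection formula $\ioUpper{ij}{i}\ioLower{ij}{i}(-)=c_1(\NN_{X_{ij}/X_i})\cdot(-)$ turns this into $\bigl(c_1(\NN_{X_{ij}/X_i})+c_1(\NN_{X_{ij}/X_j})\bigr)\cdot z_{ij}$. The Friedman condition $\NN_{X_{ij}/X_i}\otimes\NN_{X_{ij}/X_j}\otimes\OO(T)=\OO_{X_{ij}}$ yields $c_1(\NN_{X_{ij}/X_i})+c_1(\NN_{X_{ij}/X_j})=-[T]=-\sum_k[X_{ijk}]$, and the projection formula converts $[X_{ijk}]\cdot z_{ij}$ into $\ioLower{ijk}{ij}\ioUpper{ijk}{ij}(z_{ij})$. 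Hence $(\rho\delta z_{ij})_{ij}=-\sum_k\ioLower{ijk}{ij}\ioUpper{ijk}{ij}(z_{ij})$, which is exactly $(\delta'\rho' z_{ij})_{ij}$ as read off from the definitions of $\rho'$ and $\delta'$.

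The hard part is not conceptual but bookkeeping: I expect the main obstacle to be confirming that the signs produced by the three possible positions of $k$ relative to $\{i,j\}$ in $\rho'$ and $\delta'$ conspire to reproduce, with the correct sign, each off-diagonal term of $\rho\delta$, and that on the diagonal the two normal-bundle contributions combine into precisely the class $-[T]$ supplied by Friedman. The geometric inputs, the self-intersection formula and transverse base change, are standard; the single place where a genuine hypothesis is used is the diagonal, where the Friedman condition is exactly what trades the sum of the two normal-bundle classes for the triple-locus class, so that the diagram, which is a priori only commutative ``up to the infinitesimal normal-bundle obstruction,'' commutes on the nose.
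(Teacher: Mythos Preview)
Your proposal is correct and follows essentially the same route as the paper: both arguments observe that only the commutativity of the lower square $\rho\circ\delta=\delta'\circ\rho'$ is nontrivial, split the verification on a generator $z_{ij}$ into the cases $|\{i,j\}\cap\{a,b\}|=0,1,2$, handle the off-diagonal case via the base-change identity for the transverse pair $X_{ij},X_{ik}\subset X_i$, and settle the diagonal case by the self-intersection formula combined with the Friedman condition. The paper checks a single representative off-diagonal subcase and declares the rest similar, whereas you gesture at the full case split over the position of $k$; otherwise the two proofs are the same.
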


\begin{proof}
%{\color{red} check this proof and check notation}
It remains to be seen that the lower square commutes. For this we want to prove
\[
	\bigl((\delta' \circ \rho' )(z_{ij})\bigr)_{ab} = \bigl((\rho \circ \delta)(z_{ij})\bigr)_{ab}. 
\]
There are three cases:

{\bf Case 1}: $|\{i,j\} \cap \{a,b\}| = 0$. In this case both sides of the equation are $0$.

{\bf Case 2}: $|\{i,j\} \cap \{a,b\}| = 1$. In this case we can assume $\{i,j\} \cup \{a,b\} = \{i,j,k\}$. Depending on the relative size of the indices involved there are a number of cases to consider. We check only the case $a=j, b=k$, the others are similar. The left hand side then is
\[
             \bigl((\delta' \circ \rho' )(z_{ij})\bigr)_{jk}
             = \bigl( \delta' ( \ioUpper{ijk}{ij}(z_{ij}) \pm \dots)\bigr) _{jk}
             = -\ioLower{ijk}{jk} \ioUpper{ijk}{ij}(z_{ij}).
\]
Similarly the right hand side is
\begin{align*}
             \bigl((\rho \circ \delta)(z_{ij})\bigr)_{jk}
             &= \Bigl(\rho \bigl(\ioLower{ij}{i}(z_{ij})-\ioLower{ij}{j}(z_{ij})\bigr)\Bigr)_{jk}\\
             &= 0 - \Bigl(\rho \bigl( \ioLower{ij}{j}(z_{ij})\bigr)\Bigr)_{jk}\\
            &= - \ioUpper{jk}{j} \ioLower{ij}{j}(z_{ij}).%\\
\end{align*}
So our claim is just the commutativity of the diagram
\xycenter{
	\Chow^*(X_{ij}) \ar[r]^{\iota_*} \ar[d]^{\iota^*} & \Chow^*(X_j) \ar[d]^{\iota^*}\\
	\Chow^*(X_{ijk}) \ar[r]^{\iota_*} & \Chow^*(X_{jk}).
}

{\bf Case 3}: $\{i,j\} = \{a,b\}$. Here we get on the left hand side
\begin{align*} 
	\bigl((\delta' \circ \rho' )(z_{ij})\bigr)_{ij}
             &= \left( \delta' \left ( \sum_k \pm \ioUpper{ijk}{ij}(z_{ij}) \right)\right) _{ij} \\
             &= - \sum_k   \ioLower{ijk}{ij}(z_{ij}) \ioUpper{ijk}{ij}(z_{ij})
\end{align*}
since by our sign convention $\rho'$ and $\delta'$ always induce opposite signs in this situation.
On the right hand side we use the Friedman relation:
\begin{align*}
             \bigl((\rho \circ \delta )(z_{ij})\bigr)_{ij}
             &= \Bigl(\rho \bigl(\ioLower{ij}{i}(z_{ij})-\ioLower{ij}{j}(z_{ij})\bigr)\Bigr)_{ij}\\
            &= \ioUpper{ij}{i} \ioLower{ij}{i}(z_{ij}) - \bigl(-\ioUpper{ij}{j} \ioLower{ij}{j}(z_{ij})\bigr)\\
            &= N_{X_{ij}/X_i} \cdot z_{ij} + N_{X_{ij}/X_j} \cdot z_{ij} \\
           &= \left( -\sum_k \ioLower{ijk}{ij}(X_{ijk})  \right) \cdot z_{ij} \\
           & = -\sum_k \ioLower{ijk}{ij} \ioUpper{ijk}{ij} (z_{ij}). %\\
\end{align*}
\end{proof}

One drawback of $\Chowprelog (X)$ is that it is rather hard to compute with in examples, for instance it can be very far from being finitely generated. Instead, we would like to have an object constructed using numerical equivalence that receives at the very least an arrow from $\Chowprelog (X)$. 

\begin{definition}\label{dBothmer}
Let $X$ be an snc variety. Then we define $R_{\mathrm{num}}(X)$ and $ \Chowprelognum (X)$ via the following  diagram induced by the diagram in Proposition \ref{pBothmer}: 
 \xycenter{
  & 0 \ar[d] & 0 \ar[d] & \\
  & R_{\mathrm{num}}(X) \ar[d] \ar[r] & \Chowprelognum (X) \ar[d] \ar[r] & 0 \\
\bigoplus \Chownum^*(X_{ij})  \ar[r]^\delta
&\bigoplus \Chownum^*(X_{i})  \ar[r]^-{} \ar[d]^-\rho
&\coker (\delta ) \ar[r]
&  0 \\
&\bigoplus \Chownum^*(X_{ij}) & 
 }
 
\newcommand{\II}{\{1,\dots,n\}}

Notice that being in the kernel of $\rho$ amounts to the prelog condition. 
  
 %We define the \emph{saturated numerical prelog Chow group} $\Numsatprelog^* (X)$ as the saturation of $ \Chowprelognum (X)$ in the lattice $\coker (\delta )/(torsion)$. Notice that this is the same definition as in \cite[Def. 4.5]{BBvG19-1}.  
 \end{definition}

%{\color{red} we still have to decide if we really want to divide out the ideal generated, or just the image of $\mathrm{ker}(\nu_*\mid_{R(X)})$ in $R_{\mathrm{num}}(X)$, and we have to say that the diagram in Proposition \ref{pBothmer} still exists with slight modifications}

\begin{proposition}\label{pHomoIntoNumprelog}
The natural projection map
\[
\varpi \colon R(X) \to R_{\mathrm{num}}(X)
\]
maps classes in the kernel of $\nu_*\mid_{R(X)}$ into $\im\, \delta \cap R_{\mathrm{num}}(X)$. Hence we obtain an induced homomorphism
\[
\bar{\varpi}\colon \Chowprelog (X) \to \Chowprelognum (X).
\]
\end{proposition}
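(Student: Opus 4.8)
The plan is to exploit the two descriptions of $\ker\nu_*$ already available, namely the explicit presentation of Proposition \ref{pKernelNu} and the defining diagram of Definition \ref{dBothmer}, together with the fact that both $\delta$ and $\rho$ are assembled from Gysin pullbacks and proper pushforwards, which are compatible with numerical equivalence. First I would check that $\varpi$ is genuinely well-defined, i.e. that $\varpi(R(X)) \subseteq R_{\mathrm{num}}(X)$. A tuple $(\alpha_i) \in R(X)$ lies in the kernel of the Chow-level map $\rho$, and since $\rho$ is built entirely out of pullbacks $\iota^*$, it suffices to know that these carry numerically trivial classes to numerically trivial classes. This follows from the projection formula: for a regular embedding $\iota$ and any test class $\beta$ of complementary dimension one has $\deg(\iota^*\alpha \cdot \beta) = \deg(\alpha \cdot \iota_*\beta)$, so $\alpha \equiv_{\mathrm{num}} 0$ forces $\iota^*\alpha \equiv_{\mathrm{num}} 0$. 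Hence reduction modulo numerical equivalence commutes with $\rho$ and sends $\ker\rho$ into $\ker\rho = R_{\mathrm{num}}(X)$.

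The heart of the argument is then a short diagram chase. Proposition \ref{pKernelNu} identifies $\ker\nu_* = \im\delta$ inside $\bigoplus_i \Chow_*(X_i)$, so any $x \in \ker(\nu_*\mid_{R(X)}) = R(X) \cap \im\delta$ can be written $x = \delta(y)$ for some $y \in \bigoplus \Chow_*(X_{ij})$. Because $\delta$ is a signed combination of proper pushforwards $\iota_*$, which preserve numerical equivalence (again by the projection formula), reduction commutes with $\delta$: writing $\bar y$ for the image of $y$ in the numerical groups, one gets $\varpi(x) = \delta(\bar y) \in \im\delta$. Combined with the previous paragraph this gives $\varpi(x) \in \im\delta \cap R_{\mathrm{num}}(X)$, which is the first assertion. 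The induced map $\bar\varpi$ then exists formally, since $\Chowprelog(X) = R(X)/\ker(\nu_*\mid_{R(X)})$ while, by Definition \ref{dBothmer}, $\Chowprelognum(X)$ is the image of $R_{\mathrm{num}}(X)$ in $\coker\delta$, that is $R_{\mathrm{num}}(X)/(\im\delta \cap R_{\mathrm{num}}(X))$; a map killing the source kernel and landing in the target sublattice descends to the quotients.

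The one point requiring genuine care, and the step I expect to be the main obstacle, is matching the two incarnations of $\delta$: the boundary map of Proposition \ref{pKernelNu} is written on the dimension-graded groups $\Chow_*$, whereas the $\delta$ of Definition \ref{dBothmer} and of the present statement lives on the codimension-graded numerical groups $\Chownum^*$. On each smooth component these gradings are identified by Poincar\'e duality, and I would verify that this identification intertwines the two $\delta$'s and is compatible with $\varpi$, so that the relation $x = \delta(y)$ in the sense of Proposition \ref{pKernelNu} really does exhibit $\varpi(x)$ as an element of $\im\delta$ in the sense of Definition \ref{dBothmer}. Once this bookkeeping is in place, together with the two compatibility facts (pushforward preserving, and Gysin pullback preserving, numerical triviality), the remainder is the purely formal chase sketched above.
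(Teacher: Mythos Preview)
Your proposal is correct and follows the same underlying idea as the paper; the paper's entire proof is the single sentence ``This is clear by construction,'' so you have simply unpacked what the authors regard as obvious (namely that the numerical diagram of Definition \ref{dBothmer} is induced from the Chow-level one, with $\delta$ and $\rho$ descending because pushforward and Gysin pullback respect numerical equivalence). Your point (3) about reconciling dimension versus codimension gradings is harmless but a bit over-cautious: on each smooth $X_i$ the groups $\Chow_*$ and $\Chow^*$ coincide as abelian groups and the two $\delta$'s are literally given by the same signed pushforward formula, so no nontrivial compatibility check is needed there.
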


\begin{proof}
This is clear by construction.
\end{proof}

\section{Specialization homomorphisms into prelog Chow rings}\label{sSpecializations}

We start by recalling some facts about specialization homomorphisms, following the original paper \cite{Ful75} or the standard reference \cite{Ful98}. Let
\[
\pi \colon \XX \to C
\]
be a flat morphism from a variety $\XX$ to a nonsingular curve $C$. We fix a distinguished point $t_0 \in C$, and denote by $X$ the scheme-theoretic fibre $\XX_{t_0} = \pi^{-1}(t_0)$. Let $i \colon X \to \XX$ be the inclusion. As in \cite[\S 4.1, p. 161]{Ful75} (or \cite[Chapter 2.6]{Ful98}) one can then define a ``Gysin homomorphism"
\[
i^* \colon \Chow_{k} (\XX ) \to \Chow_{k-1} (X)
\]
by defining the map $i^* \colon Z_k (\XX ) \to Z_{k-1} X$ and checking that this descends to rational equivalence; on the level of cycles, if an irreducible subvariety $V$ of $\XX$ satisfies $V \subset X$, one defines $i^* (V) =0$, and otherwise as $i^* (V) = V_{t_0}$, where $V_{t_0}$ is the cycle associated to the zero scheme on $V$ of a regular function defining $X$ inside $\XX$ in a neighborhood of $X$ (notice that $X$ is a principal Cartier divisor in $\XX$ after possibly shrinking $C$). In the latter case, the class of $V_{t_0}$, well-defined as an element in $\Chow_{k-1} (|X|\cap V)$, is then the intersection $X\cdot V$ of $V$ with the Cartier divisor $X$; see also \cite[Chapter 2.3 ff.]{Ful98} for further information on this construction.

From now on we will want to work more locally on the base, hence assume that $C= \mathrm{Spec}\, R$ is a \emph{curve trait}, by which we mean that $R$ is a discrete valuation ring that is the local ring of a point on a nonsingular curve, or a completion of such a ring. By \cite[\S 4.4]{Ful75}, all what was said above remains valid in this set-up. Let $\C=R/\mathfrak{m}$ be the residue field of $R$, and $K$ the quotient field. We then have the special fibre $X_{\C}=X$ and generic fibre $X_K$ with inclusions $i\colon X \to \XX$ and $j \colon X_K\to \XX$. 

By \cite[Prop. in \S 1.9]{Ful75}, there is an exact sequence
\[
\xymatrix{
\Chow_{p+1} X \ar[r]^{i_*} & \Chow_{p+1} \XX \ar[r]^{j^*} & \Chow_{p} X_K \ar[r] & 0
}
\]
and $i^*i_* =0$, so one gets that there is a unique map $\sigma=\sigma_{\XX}$ forming 
\[
\xymatrix{
  & \Chow_p X_K \ar[dd]^{\sigma} \\
\Chow_{p+1} \XX \ar[ru]^{j^*} \ar[rd]^{i^*} & \\
 & \Chow_{p} X
}
\]
This $\sigma$ is called the \emph{specialization homomorphism}.

\begin{definition}\label{dStrictlySemistable}
The flat morphism $\pi \colon \XX \to C$ with $C$ a curve trait, or a nonsingular curve, with marked point $t_0\in C$, is called a strictly semistable degeneration if $X=\pi^{-1}(t_0)$ is reduced and simple normal crossing and $\XX$ is a regular scheme. 
\end{definition}

The advantage of $\XX$ being regular is that then all the components $X_i$ of $X$ are Cartier divisors in $\XX$, and we can form intersections with each of them separately. 

\begin{theorem}\label{pSpecializationIntoChowPrelog}
If $\pi \colon \XX \to C$ is a strictly semistable degeneration, $\sigma$ takes values in $\Chowprelog (X)$. 
\end{theorem}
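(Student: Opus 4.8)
The plan is to produce, for each class $\gamma \in \Chow_p(X_K)$, a compatible class $\alpha = (\alpha_i)_{i\in I} \in R(X)$ satisfying $\nu_*(\alpha) = \sigma(\gamma)$. Since by Definition \ref{dPrelogChow} we have $\Chowprelog(X) = R(X)/\ker(\nu_*|_{R(X)}) \cong \im(\nu_*|_{R(X)}) \subset \Chow_*(X)$, producing such an $\alpha$ is precisely the assertion that $\sigma(\gamma)$ lies in $\Chowprelog(X)$.

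To build $\alpha$, I would first use the surjectivity of $j^* \colon \Chow_{p+1}(\XX) \to \Chow_p(X_K)$ from the exact sequence recalled above to choose a cycle $V$ on $\XX$ with $j^*[V] = \gamma$; one may arrange that every component of $V$ dominates $C$, so that no component is contained in $X$. By the definition of the specialization homomorphism, $\sigma(\gamma) = i^*[V] = X\cdot V$, the intersection with the Cartier divisor $X$. Because $\XX$ is regular and $X$ is reduced simple normal crossing, each component $X_i$ is itself a Cartier divisor in $\XX$ and $[X] = \sum_{i\in I}[X_i]$; I therefore set
\[
\alpha_i := X_i\cdot V \in \Chow_*(X_i).
\]
The equality $\nu_*(\alpha) = \sigma(\gamma)$ is then immediate, since pushing each $\alpha_i$ forward to $X$ and summing gives $\sum_{i}X_i\cdot V = \bigl(\sum_i X_i\bigr)\cdot V = X\cdot V = i^*[V]$.

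The substantive point is to verify that $\alpha$ obeys the prelog condition $\ioUpper{jk}{j}(\alpha_j) = \ioUpper{jk}{k}(\alpha_k)$ for every pair $\{j,k\}$. Here the hypotheses enter decisively: since $\XX$ is regular and $X$ is SNC, the smooth divisor $X_{jk} = X_j\cap X_k \subset X_j$ is cut out by the restriction to $X_j$ of the Cartier divisor $X_k$ on $\XX$, so the Gysin pullback $\ioUpper{jk}{j}$ agrees with intersection by that Cartier divisor. Consequently
\[
\ioUpper{jk}{j}(\alpha_j) = X_k\cdot(X_j\cdot V), \qquad \ioUpper{jk}{k}(\alpha_k) = X_j\cdot(X_k\cdot V),
\]
and these two refined classes, both supported on $X_{jk}\cap V$, coincide by the commutativity of intersecting with Cartier divisors \cite[Ch. 2]{Ful98}. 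This gives $\alpha\in R(X)$ and hence $\sigma(\gamma) = \nu_*(\alpha)\in\Chowprelog(X)$.

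I expect the main obstacle to be the bookkeeping in the last paragraph: one must match the Gysin/restriction maps $\ioUpper{jk}{j}$, defined through the regular embeddings $X_{jk}\hookrightarrow X_j$, with genuine Cartier-divisor intersections on the regular total space $\XX$, and confirm that the two refined cycle classes agree before commutativity is applied. By contrast, independence of the construction from the choice of lift $V$ requires no separate argument, as it is already encoded in the well-definedness of $\sigma$ via $i^*i_* = 0$.
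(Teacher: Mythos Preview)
Your proposal is correct and follows essentially the same approach as the paper: both arguments define $\alpha_i := X_i \cdot V$ using that the components $X_i$ are Cartier on the regular total space, observe that $\sum_i \alpha_i = X \cdot V = \sigma(\gamma)$, and then verify the prelog condition via the commutativity $X_j \cdot (X_i \cdot V) = X_i \cdot (X_j \cdot V)$ of refined intersections with Cartier divisors \cite[Cor.~2.4.2]{Ful98}, after identifying $\iota^*_{\{j,k\}>\{j\}}$ with intersection by the Cartier divisor $X_k|_{X_j} = X_{jk}$. The only differences are presentational: you frame the argument starting from a class $\gamma$ on the generic fibre and lifting it, while the paper works directly with an irreducible $V \subset \XX$ not contained in $X$.
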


\begin{proof}
The technical heart of the proof is that by \cite[Chapter 2]{Ful98}, given a $k$-cycle $\alpha$ and Cartier divisor $D$ on some algebraic scheme, one can construct an intersection class $D\cdot \alpha \in \Chow_{k-1} (|D|\cap |\alpha |)$, satisfying various natural properties. Let $\alpha =[Y]$ be the class of a subvariety $Y$. If $Y \not\subset |D|$, then $Y$ restricts to a well-defined Cartier divisor on $D$ whose associated Weil divisor is defined to be $D\cdot \alpha$; if $Y\subset |D|$, then one defines $D\cdot \alpha$ as the linear equivalence class of any Weil divisor associated to the restriction of the line bundle $\OO (D)$ to $Y$. 

Let now $V$ be an irreducible subvariety of $\XX$ not contained in $X$. We have $X=\bigcup_i X_i$, and all components $X_i$ are Cartier. By \cite[Prop. 2.3 b)]{Ful98}, one has in $\Chow_* (|X|\cap V)$
\[
X \cdot V= (\sum_i X_i) \cdot V = \sum_i X_i \cdot V. 
\]
Hence defining $\alpha_i = X_i \cdot V$ (viewed as classes in $\Chow_* (X_i)$), we will have proved the Proposition once we show that $(\alpha_i)$ is in the ring $R(X)$ of compatible classes. This follows from the important commutativity property of the pairing between Cartier divisors and cycles on any algebraic scheme: if $D, D'$ are Cartier divisors and $\beta$ a cycle, then
\[
D \cdot (D' \cdot \beta) = D' \cdot (D\cdot \beta )
\]
as classes in $\Chow_* (|D|\cap |D'|\cap |\beta|)$ by \cite[Cor. 2.4.2 of Thm. 2.4]{Ful98}. We now only have to unravel that this boils down to the property we seek to prove: indeed, 
\[
X_j \cdot \alpha_i = X_i \cdot \alpha_j 
\]
and it follows from the definitions that we can compute $X_j \cdot \alpha_i$ as follows: $X_j$ restricts to a well-defined Cartier divisor on the subvariety $X_i$, which is nothing but the Cartier divisor associated to $X_i\cap X_j$; intersecting that Cartier divisor with the cycle $\alpha_i$ on $X_i$ gives $X_j \cdot \alpha_i$; analogously, for $X_i \cdot \alpha_j $ with the roles of $i$ and $j$ interchanged. Hence $(\alpha_i)$ is in $R(X)$. 
\end{proof}

\begin{remark}\label{rWhySmoothTotalSpace}
The assumption that the total space $\XX$ be nonsingular in Theorem \ref{pSpecializationIntoChowPrelog} is essential, and it is useful to keep the following example in mind: take a degeneration of a family of plane conics into two lines, and then consider the product family of this with itself. The central fibre of the product family is a union of four irreducible components all of which are isomorphic to $\P^1\times \P^1$ glued together along lines of the rulings as in Figure \ref{PrelogIllustration2} on the left-hand side. The cycle indicated in green is the specialization of the diagonal, and the red cycles 1, 2, 3 are all rationally equivalent, but the intersections with the green cycle are \emph{not} rationally equivalent. Moreover, the central fiber in \textcircled{1} is not snc, but in fact, one can nevertheless define the prelog Chow ring as in Section \ref{sPrelogSNC}: one only needs the intersections of every two irreducible components to be smooth for this, and all results of that Section, in particular, Proposition \ref{pIntersectionPairingPrelog} hold with identical proofs if in addition the intersection of every subset of the irreducible components is still smooth, which is the case here. So what is wrong? The point is that the specialization of the diagonal is \emph{not} in $\Chowprelog (X)$ here. For example, the green cycle in \textcircled{1} on the upper left hand $\P^1\times\P^1$ intersects the mutual intersection of the two upper $\P^1\times\P^1$'s in a point, but there is no cycle on the upper right-hand $\P^1\times\P^1$ to match this to satisfy the pre-log condition. 

However, suppose we blow up one of the four components in the total space (this component is a non-Cartier divisor in the total space), thus desingularizing the total space and getting a new central fibre as in \textcircled{2}. Then the specialization of the diagonal will look like the green cycle and \emph{is} in $\Chowprelog (X)$. The red cycle in 1 here is equivalent to the red cycle in 2 and, if we try to move it across to the upper left-hand irreducible component, we see that the cycle 1 is equivalent to the dashed cycle 3, with multiplicities assigned to the irreducible components of the cycle as indicated. The intersection with the green cycle remains constant in accordance with Proposition \ref{pIntersectionPairingPrelog}: first it is $0$, then it is $+1-1=0$ again.

\begin{figure}[h]
	\centering
	\includegraphics[scale=0.45]{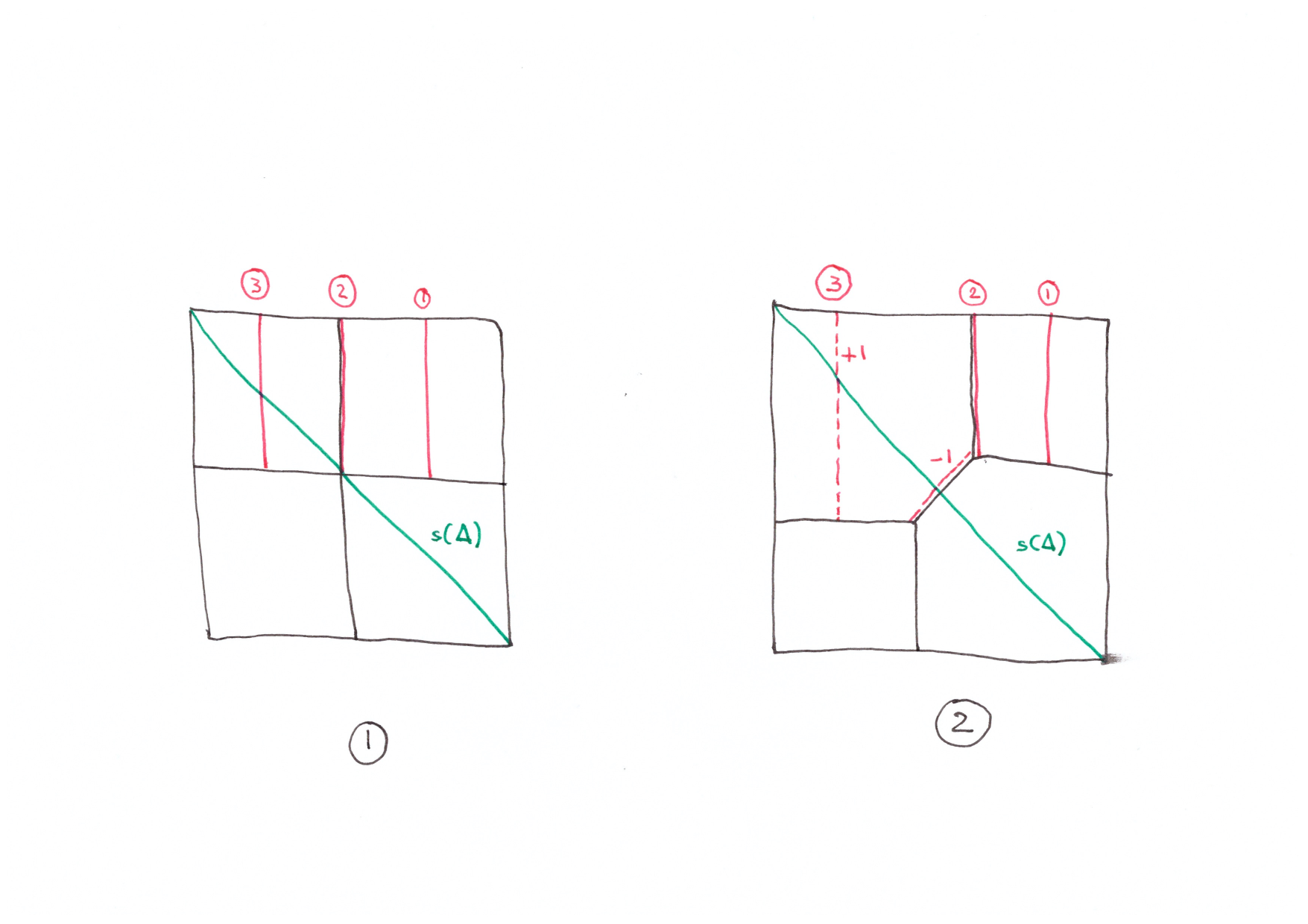}
	\caption{}\label{PrelogIllustration2}
\end{figure}
\end{remark}

\begin{remark}
One cannot expect $\sigma$ to be injective, at least not for point classes ($p=0$), as the degeneration of an elliptic curve into a cycle of rational curves shows. 
\end{remark}

\begin{example}\label{edegnormalcone}
Returning to the setup of Example \ref{eP2F1},
let $L$ be a line in $\P^2$ and consider the strictly semistable family $\XX \to \A^1$ obtained by blowing up $L\times\{0\}$ in $\P^2\times\A^1$ (degeneration to the normal cone). Restricting to a curve trait, denote by $X_K\cong\P^2$ the generic fibre and by $X$ the special fibre. $X$ has two components, $\P^2$ and the Hirzebruch surface $\mathbb{F}_1$, glued by identifying $L$ with the $(-1)$-section of $\mathbb{F}_1$. Then the image under $\sigma$ of the hyperplane class is the generator $(L,F)$ of $\Chow^{\rm prelog}_{1} X$ and $\sigma$ is an isomorphism.

\end{example}

\section{Ramified base change and saturated prelog Chow groups}\label{sBaseChange}

We keep working in the setup of the previous Section, and consider a strictly semistable degeneration $\pi \colon \XX \to C$. Suppose that $\beta \colon C' \to C$ is some cover of smooth curves or curve traits, in general ramified at the distinguished point $t_0\in C$. Suppose $t_0'$ is a distinguished point in $C'$ mapping to $t_0$ under $\beta$. We consider the fibre product diagram
\[
\xymatrix{
\XX' = \XX\times_C C'\ar[d]^{\pi'}  \ar[r]^{\quad\quad\beta'} & \XX \ar[d]^{\pi}\\
C' \ar[r]^{\beta} & C 
}
\]
Then $\XX'$ will in general be singular. However, we can still prove that the specialization homomorphism $\sigma_{\XX'}$ will take values, modulo torsion, in a group that is very similar to the prelog Chow ring of $X=\XX_{t_0}$.

\begin{definition}\label{dSaturatedPrelogChowGroup}
Let $X$ be a simple normal crossing variety. We define \emph{the saturated prelog Chow group} $\Chowprelogsat_{,*} (X)$ as the saturation of the image of $\Chowprelog (X)$ in $\coker (\delta )/(torsion)$ with $\delta$ as in the diagram of Proposition \ref{pBothmer}.

\medskip

We define the \emph{saturated numerical prelog Chow group} $\Numsatprelog^* (X)$ as the saturation of the image of $ \Chowprelognum (X)$ in the lattice $\coker (\delta )/(torsion)$ with $\delta$ as in the diagram in Definition \ref{dBothmer}.
\end{definition}

\begin{proposition}\label{pSpecializationIntoSat}
With the notation introduced above, the specialization homomorphism $\sigma_{\XX'}$ associated to $\pi'\colon \XX'\to C'$ takes values in the group $\Chowprelogsat_{,*} (X)$ after we mod out torsion from $\Chow_* (X)$. 
\end{proposition}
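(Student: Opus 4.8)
The plan is to re-run the argument of Theorem~\ref{pSpecializationIntoChowPrelog} on $\XX'$, the only genuine difficulty being that $\XX'$ is no longer regular. If $\beta$ has ramification index $e$ at $t_0'$, so that $\beta^{*}t=(\text{unit})\cdot s^{e}$ for a uniformizer $s$ on $C'$, then along each double locus $X_{ij}$ the space $\XX'$ acquires, out of the local semistable model $xy=t$, an $A_{e-1}$-singularity $xy=s^{e}$ transverse to $X_{ij}$. The reduced central fibre $X'_{\mathrm{red}}$ is canonically isomorphic to $X$, because $\beta'$ restricts to an isomorphism on each closed stratum; but its components $X_i'$ are no longer Cartier in $\XX'$, so we can no longer split the central fibre into components and intersect with each separately. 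First I would note that $\sigma_{\XX'}$ is nonetheless defined: the construction recalled in Section~\ref{sSpecializations} needs only that $\pi'$ be flat with $C'$ a curve trait and that the central fibre be a principal Cartier divisor, namely $\{s=0\}=\divisor(s)$, and regularity of the total space is never used there. The idea is then to replace each $X_i$ by the honest Cartier divisor $D_i:=\beta'^{*}X_i$ on $\XX'$ (the pullback of the Cartier divisor $X_i$ on the regular scheme $\XX$); locally $\divisor(x)=e\,X_i'$, so $D_i=e\,X_i'$ as a Weil divisor, and the resulting factor $e$ is precisely what the saturation in the definition of $\Chowprelogsat_{,*}(X)$ is designed to absorb.

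Concretely, I would lift a class $\gamma\in\Chow_p(X_{K'})$ to some $\Gamma\in\Chow_{p+1}(\XX')$ via the surjection $j'^{*}$, so that $\sigma_{\XX'}(\gamma)=i'^{*}\Gamma=\{s=0\}\cdot\Gamma$, and set $\alpha_i:=D_i\cdot\Gamma$, a refined intersection class supported on $X_i'=|D_i|$, hence an element of $\Chow_*(X_i')=\Chow_*(X_i)$. The first computation is that $(\alpha_i)_i$ represents $e$ times the specialization: summing the Cartier divisors gives $\sum_i D_i=\beta'^{*}\bigl(\sum_i X_i\bigr)=\beta'^{*}\divisor(t)=\divisor(s^{e})=e\,\{s=0\}$, whence $\nu_*\bigl((\alpha_i)_i\bigr)=\sum_i\iota_{i,*}(D_i\cdot\Gamma)=e\,(\{s=0\}\cdot\Gamma)=e\,\sigma_{\XX'}(\gamma)$ as classes in $\Chow_*(X)=\coker(\delta)$, using additivity of intersection with Cartier divisors.

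The heart of the argument is to check that $(\alpha_i)_i$ satisfies the prelog condition, i.e. lies in $R(X)$. By Fulton's commutativity of intersections with Cartier divisors (\cite[Cor. 2.4.2 of Thm. 2.4]{Ful98}), $D_j\cdot(D_i\cdot\Gamma)=D_i\cdot(D_j\cdot\Gamma)$ in $\Chow_*(X_{ij})$. Since $\alpha_i$ is supported on $X_i'$, the left-hand side equals $(D_j|_{X_i'})\cdot\alpha_i$; here the delicate point is the multiplicity. Although $D_j=e\,X_j'$ as a Weil divisor, restricting the \emph{Cartier} divisor $D_j=\beta'^{*}X_j$ to $X_i'$ and using that $\beta'$ maps $X_i'$ isomorphically onto $X_i$ gives $D_j|_{X_i'}=(\beta'|_{X_i'})^{*}(X_j|_{X_i})=[X_{ij}]$ with multiplicity \emph{one} (in the local model $\{xy=s^{e}\}$ the defining equation $y$ of $D_j$ restricts to a coordinate on $X_i'=\{x=s=0\}$, vanishing to order one at $X_{ij}$). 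Therefore $D_j\cdot\alpha_i=\ioUpper{ij}{i}(\alpha_i)$ and symmetrically $D_i\cdot\alpha_j=\ioUpper{ij}{j}(\alpha_j)$, so commutativity yields exactly $\ioUpper{ij}{i}(\alpha_i)=\ioUpper{ij}{j}(\alpha_j)$, which is the prelog condition.

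Putting these together, modulo torsion $e\,\sigma_{\XX'}(\gamma)=\nu_*\bigl((\alpha_i)_i\bigr)$ with $(\alpha_i)_i\in R(X)$, so $e\,\sigma_{\XX'}(\gamma)$ lies in the image of $\Chowprelog(X)$ inside $\coker(\delta)/(\mathrm{torsion})$; hence $\sigma_{\XX'}(\gamma)$ lies in its saturation $\Chowprelogsat_{,*}(X)$, as claimed. I expect the main obstacle to be the bookkeeping of multiplicities just described --- cleanly separating the Weil-divisor identity $D_i=e\,X_i'$ from the multiplicity-one restriction $D_j|_{X_i'}=[X_{ij}]$ --- together with verifying that Fulton's commutativity and the support statements remain valid when $\Gamma$ has components contained in the central fibre, where the intersection classes are computed through the line bundle $\OO(D_i)$ rather than by proper intersection.
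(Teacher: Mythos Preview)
Your proof is correct and follows essentially the same approach as the paper's: both exploit that the components $X_i$ are $\mathbb{Q}$-Cartier on $\XX'$ (the paper abstractly via some $N$ with $NX_i$ Cartier, you explicitly via $D_i=\beta'^{*}X_i=e\,X_i'$), intersect with these Cartier divisors, and then invoke Fulton's commutativity together with the multiplicity-one restriction $D_j|_{X_i}=[X_{ij}]$ to verify the prelog condition. Your explicit realization of the Cartier divisors as pullbacks via $\beta'$ makes the multiplicity bookkeeping (the Weil identity $D_i=e\,X_i'$ versus the restriction $D_j|_{X_i'}=[X_{ij}]$) especially transparent, and is exactly what the paper sketches in words when it says ``$D_i$ then restricts to a well-defined Cartier divisor on $X_j$, namely the Cartier divisor associated to $X_i\cap X_j$ (without multiplicity).''
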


\begin{proof}
The punchline of the argument is very similar to that used in the proof of Theorem \ref{pSpecializationIntoChowPrelog}. The irreducible components $X_i$ of $X$, viewed as the fibre of the family $\pi'\colon \XX'\to C'$ over $t_0'$, are $\Q$-Cartier, thus there is an integer $N$ such that each $D_i:= NX_i$ is Cartier. This is so because the $X_i$ are Cartier in $\XX$, and local equations of $X_i$ in $\XX$ pull back, under $\beta'$, to local equations of $X_i$, with some multiplicity, inside $\XX'$. Hence it makes sense, given an irreducible subvariety $V$ of $\XX'$ not contained in $X$, to form the intersection products $\alpha_i = D_i \cdot V$ and view them as classes in $\Chow_* (X_i)$. In $\Chow_* (X_i)\otimes_{\Z}\Q$, we can then define the classes $\gamma_i := (1/N) \alpha_i$. Clearly, we have again, as in the proof of Theorem \ref{pSpecializationIntoChowPrelog}, 
\[
NX \cdot V= (\sum_i D_i) \cdot V = \sum_i D_i \cdot V. 
\]
Thus $\sigma_{\XX'} (V)$ and $\nu_* (\gamma_i)$ define the same class in $\Chow_* (X)/(\mathrm{torsion})$. Hence it remains to show that $(\gamma_i)$ is in $R(X)^{\Q}$. We use once more the commutativity property for intersections of cycles with Cartier divisors:
\[
D_i\cdot (D_j \cdot V) = D_j \cdot (D_i \cdot V)
\]
holds in $\Chow_* (|D_i|\cap |D_j|\cap |V|)$ by \cite[Cor. 2.4.2 of Thm. 2.4]{Ful98}. We need to convince ourselves that this indeed implies that $(\gamma_i)$ is in $R(X)^{\Q}$. Immediately from the definition, we see that we can compute $D_i \cdot (D_j \cdot V)$ as follows: consider $\alpha_j$ as a class on $X_j$; $D_i$ then restricts to a well-defined Cartier divisor on $X_j$, namely the Cartier divisor associated to $X_i\cap X_j$ (without multiplicity). Then $D_i\cdot (D_j \cdot V)$ is just the intersection of $X_i\cap X_j$ and $\alpha_j$, taken on $X_j$. The same for $D_j \cdot (D_i \cdot V)$ with $i$ and $j$ interchanged. In other words, $N\gamma_i$ and $N\gamma_j$ pull-back to the same class in $\Chow_* (X_i\cap X_j)$, hence the assertion.
\end{proof}

%\begin{remark}\label{sMapSpecializations}
%Keeping the set-up of Proposition \ref{pSpecializationIntoSat}, let $\varrho \colon \XX'' \to \XX'$ be a proper birational morphism of schemes over $C'$ such that $\XX''$ is smooth and $\varrho$ is an isomorphism outside the fibres over $t_0'\in C'$. We have a diagram 
%\[
%\xymatrix{
%\XX'' \ar[rd]^{\pi''} \ar[r]^{\varrho} & \XX' \ar[d]^{\pi'} \\
% & C'
%}
%\]
%Then there is a commutative diagram for the specialization maps
%\[
%\xymatrix{
%\Chow_* (\XX''_{\eta}) \ar[r]^{(\varrho_L)_*}\ar[d]^{\sigma_{\XX''}} & \Chow_* (\XX'_{\eta}) \ar[d]^{\sigma_{\XX'}}\\
%\Chow_* (X'') \ar[r]^{(\varrho\mid_{X''})_*} & \Chow_* (X' )
%}
%\]
%by part (1) of the Proposition on page 165 of \cite{Ful75}. Here $X'$ and $X''$ are the distinguished fibres of $\pi'$ and $\pi''$, and $\XX''_{\eta}$ and $\XX'_{\eta}$ the generic fibres. 
%In particular, $(\varrho\mid_{X''})_*$ maps the image of $\sigma_{\XX''}$ inside $\Chowprelogsat_{,*} (X'')$ to the image of $\sigma_{\XX'}$ inside $\Chowprelogsat_{,*} (X')$. It is not at all clear to us, though, if $(\varrho\mid_{X''})_*$ maps the entire subgroup $\Chowprelogsat_{,*} (X'')$ of $\Chow_* (X'')/(\mathrm{torsion})$ into $\Chowprelogsat_{,*} (X')$, although we observed this to be the case in some examples we considered. We leave it as an open question. 
%\end{remark}

\begin{proposition}\label{pHomoIntoNumsatprelog}
There exists a natural homomorphism of modules
\[
 \Chowprelogsat_{,*}(X) \to \Numsatprelog_{,*} (X).
\]
\end{proposition}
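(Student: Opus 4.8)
The plan is to exhibit the arrow as the restriction of a comparison homomorphism between the two ambient lattices, obtained by mapping the rational-equivalence diagram of Proposition \ref{pBothmer} onto its numerical analogue in Definition \ref{dBothmer} through the quotient maps $\Chow^*(Y)\to\Chownum^*(Y)$ attached to each smooth stratum $Y\in\{X_i,X_{ij}\}$.

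First I would check that these quotient maps are compatible with the operations defining $\delta$. For each regular embedding $\iota=\iota_{\{ij\}>\{i\}}$ of the smooth strata, numerical equivalence is preserved under the proper pushforward $\iota_*$ and the Gysin pullback $\iota^*$: if $\alpha$ is numerically trivial, then for any complementary class $\gamma$ the projection formula yields $\deg(\iota_*\alpha\cdot\gamma)=\deg(\alpha\cdot\iota^*\gamma)=0$ and $\deg(\iota^*\alpha\cdot\gamma)=\deg(\alpha\cdot\iota_*\gamma)=0$, so $\iota_*\alpha$ and $\iota^*\alpha$ remain numerically trivial. Consequently the vertical surjections assemble into a commuting square
\[
\xymatrix{
\bigoplus \Chow^*(X_{ij}) \ar[r]^{\delta} \ar[d] & \bigoplus \Chow^*(X_i) \ar[d] \\
\bigoplus \Chownum^*(X_{ij}) \ar[r]^{\delta_{\mathrm{num}}} & \bigoplus \Chownum^*(X_i)
}
\]
and therefore descend to a homomorphism on horizontal cokernels
\[
\phi\colon \Chow_*(X)=\coker(\delta)\longrightarrow \coker(\delta_{\mathrm{num}}),
\]
which is well defined precisely because the image of $\delta$ is sent into the image of $\delta_{\mathrm{num}}$.

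Next I would pass to torsion-free quotients. Since $\phi$ carries torsion to torsion, it induces a map of lattices $\bar\phi\colon \Chow_*(X)/(\mathrm{torsion})\to \coker(\delta_{\mathrm{num}})/(\mathrm{torsion})$. The same vertical surjections make the square
\[
\xymatrix{
R(X) \ar[r]^{\nu_*} \ar[d]^{\varpi} & \Chow_*(X) \ar[d]^{\phi} \\
R_{\mathrm{num}}(X) \ar[r] & \coker(\delta_{\mathrm{num}})
}
\]
commute, so that $\bar\phi$ carries the image of $\Chowprelog(X)$ into the image of $\Chowprelognum(X)$, compatibly with the map $\bar\varpi$ of Proposition \ref{pHomoIntoNumprelog}; in particular this image lies inside $\Numsatprelog_{,*}(X)$.

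Finally I would run the saturation argument. Given $x\in\Chowprelogsat_{,*}(X)$, by definition $mx$ lies in the image of $\Chowprelog(X)$ for some positive integer $m$, whence $m\,\bar\phi(x)=\bar\phi(mx)\in\Numsatprelog_{,*}(X)$. Since $\Numsatprelog_{,*}(X)$ is saturated inside the lattice $\coker(\delta_{\mathrm{num}})/(\mathrm{torsion})$, this forces $\bar\phi(x)\in\Numsatprelog_{,*}(X)$, so $\bar\phi$ restricts to the desired map; it is a module homomorphism because $\phi$ and all quotient maps in sight are equivariant for the module structures constructed in Proposition \ref{pIntersectionPairingPrelog}. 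The only step that is not purely formal is the compatibility of $\Chow^*\to\Chownum^*$ with both $\iota_*$ and $\iota^*$, which is what makes the comparison square commute and renders $\phi$ well defined; I expect this to be the crux, although it is standard, and everything downstream—including the one-line saturation step—then follows mechanically.
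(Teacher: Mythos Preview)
Your proof is correct and is precisely the detailed unpacking of what the paper dismisses in a single line as ``Clear by construction.'' The paper offers no further argument, so your explicit verification that the quotient maps $\Chow^*\to\Chownum^*$ commute with the $\iota_*$ and $\iota^*$ used in $\delta$, together with the formal saturation step, simply fills in what the authors leave implicit.
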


\begin{proof}
Clear by construction.
\end{proof}

\section{Prelog decompositions of the diagonal}\label{sDecDiag}

We recall some facts concerning the degeneration method in a way that will be suitable to develop our particular view point on it. We will follow \cite{Voi15} and \cite{CT-P16}. 

\begin{definition}\label{dDecomposition}
Let $V$ be a smooth projective variety of dimension $d$ (over any field $K$). We say that $V$ has a \emph{decomposition of the diagonal} if one can write
\[
[\Delta_V] = [V\times p] + [Z] \quad \mathrm{in}\: \Chow_d (V\times V)
\]
where $p$ is a zero-cycle of degree $1$ on $V$ and $Z \subset V\times V$ is a cycle that is contained in $D\times V$ for some codimension $1$ subvariety $D\subset V$.
\end{definition}

This is equivalent to $V$ being universally $\mathrm{CH}_0$-trivial (meaning the degree homomorphism $\deg\colon \mathrm{CH}_0 (V_{K'}) \to \Z$ is an isomorphism for any overfield $K'\supset K$) by \cite[Prop. 1.4]{CT-P16}. If it holds, it holds with $p$ replaced by any other zero-cycle of degree $1$, in particular for $p$ a $K$-rational point if $V$ has any. If $V$ is smooth and projective and stably rational over $K$ (or more generally retract rational), then $V$ has a decomposition of the diagonal by \cite{ACTP17}, see also \cite[Lemm. 1.5]{CT-P16}.

We now consider a degeneration 
\[
\pi_{\VV} \colon \VV \to C
\]
over a curve $C$ with distinguished point $t_0$ and special fibre $V=\VV_{t_0}$. We usually assume a general fibre of $\pi_{\VV}$ to be rationally connected and smooth (in particular, all points will be rationally equivalent on it), since otherwise the question is not interesting. Let $K$ be the function field of $C$. Suppose that a very general fibre of $\pi_{\VV}$ is even stably rational, then this is also true for the geometric generic fibre $\VV_{\bar{K}}$. Indeed, for $b$ outside of a countable union of proper subvarieties of $C$ we have a diagram
\[
\xymatrix{
\VV_{\bar{K}}\ar[d] \ar[r]^{\simeq}_j & \VV_b \ar[d]\\
\mathrm{Spec}\, \bar{K} \ar[r]^{\simeq}_i & \mathrm{Spec}\, \C
}
\]
for some isomorphism $i$ and an isomorphism $j$ of schemes (see \cite[Lemma 2.1]{Vi13}). In particular, $\VV_{\bar{K}}$ then has a decomposition of the diagonal and is universally $\mathrm{CH}_0$-trivial. 

Now suppose that we want to prove that a very general fibre of $\pi_{\VV}$ is not stably rational. We would then assume the contrary, arguing by contradiction, and the classical degeneration method \cite[Thm. 1.14]{CT-P16} would then proceed as follows: \emph{assume that the special fibre $V$ is sufficiently mildly singular, in particular integral with a $\mathrm{CH}_0$-universally trivial resolution of singularities $f\colon \tilde{V}\to V$ (meaning $f_* \colon \mathrm{CH}_0 (\tilde{V}_{K'}) \to \mathrm{CH}_0  (V_{K'})$ is an isomorphism for all overfields $K'\supset K$)}. Then the fact that $\VV_{\bar{K}}$ is universally $\mathrm{CH}_0$-trivial would imply that $\tilde{V}$ is universally $\mathrm{CH}_0$-trivial as well. $\tilde{V}$ being smooth, we can now use various obstructions, such as nonzero Brauer classes, to show that $\tilde{V}$ is in fact not universally $\mathrm{CH}_0$-trivial and get a contradiction.

%In order to talk about prelog decompositions of the diagonal, we introduce the concept of prelog cycles (as opposed to prelog cycle classes).

\begin{definition}\label{dPrelogCycle}
A \emph{prelog $d$-cycle $Z$} on some simple normal crossing scheme $X =\bigcup_i X_i$ is a tuple of $d$-cycles $(Z_i)$ with support $|Z_i|$ on the normalization $X^{\nu}$ such that for every $X_{ij}=X_i\cap X_j$ we have
\begin{enumerate}
\item[(a)]
$[Z_i] .[X_{ij}] = [Z_j]. [X_{ij}]$ in $\Chow_* (X_{ij})$. 
\item[(b)]
$|Z_i|\cap X_{ij} = |Z_j|\cap X_{ij}$, set-theoretically.
\end{enumerate}
Here (a) means that $([Z_i])\in R(X)$. 
\end{definition}

A prelog cycle determines a prelog cycle class in $\Chowprelog(X)$.

\begin{definition}\label{dPrelogDecompositionDiagonal}
Let $\pi_{\VV}\colon \VV \to C$ be a strictly semistable degeneration over a smooth curve $C$ with distinguished point $t_0$. Let $\pi_{\XX}\colon \XX \to C$ be a strictly semistable family fitting into a diagram
\[
\xymatrix{
\XX \ar[r]^{\varrho} \ar[rd]^{\pi_{\XX}} & \VV \times_C \VV \ar[d] \\
 & C 
}
\]
where the map $\varrho$ is a birational morphism that is an isomorphism outside the central fibres.

 Let $X_i$, $i\in I$, be the irreducible components of $X$, and $X_{ij}=X_i\cap X_j$ their mutual intersections for $i\neq j$. 

We say that $X$ \emph{has a prelog decomposition of the diagonal relative to the given family $\XX\to C$} if there exists a ramified cover $C'\to C$, and a point $t_0'\mapsto t_0$, such that the following hold: let $\sigma_{\XX'}$ be the specialisation homomorphism for the family $\XX'\to C'$ induced by $\XX\to C$
\[
\sigma_{\XX'} \colon \Chow_* (\VV_L\times_L \VV_L) \to \Chow_* (\XX_{t_0'}).
\]
Define 
\[
\zeta = \sigma_{\XX'}([\Delta_{\VV_L}]) - \sigma_{\XX'}([\VV_L \times \{p_L\} ]) 
\]
for $p_L$ an $L$-rational point of $\VV_L$. We use the identification $X = \XX'_{t_0'}$. Then we require that there exist cycles $A_i$ on $X_i$ such that
\begin{enumerate}
\item[(1)]
$(A_i)$ is a prelog cycle on $X$.
\item[(2)]
Let $\nu\colon \bigcup_{i\in I} X_i \to X$ be the normalisation. Then the \emph{cycle}
\[
\sum_{i\in I} \nu_* A_i
\]
is divisible by $r\in \mathbb{N}_{>0}$ as a \emph{cycle} on $X$ (not a cycle class!) and the class of the cycle
\[
\frac{1}{r} \sum_{i\in I} \nu_* A_i
\]
is equal to $\zeta$. 
\item[(3)]
None of the $A_i$ dominate any component of $V$ under the morphism $\mathrm{pr}_1\circ \varrho : X \to V\times V \to V$. 
\end{enumerate}
\end{definition}

Notice that a family $\XX \to C$ satisfying the assumptions of the above Definition can be obtained according to \cite[Prop 2.1]{Har01} by some succession of blow-ups of components of the central fibre of $\VV \times_C \VV \to C$ that are not Cartier in the total space. 

\begin{theorem}\label{tPrelogDecompositionInherited}
Given a strictly semistable degeneration $\VV\to C$ such that $\VV_{\bar{K}}$ has a (Chow-theoretic) decomposition of the diagonal, and given any strictly semistable modification $\XX \to C$ of the product family $\VV\times_C \VV \to C$, the central fibre $X$ has a prelog decomposition of the diagonal relative to $\XX\to C$.
\end{theorem}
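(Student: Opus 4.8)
The plan is to unwind the definitions and show that the element $\zeta$, which by construction lands in the saturated prelog Chow group, is represented by an honest prelog cycle satisfying conditions (1)--(3). Since $\VV_{\bar K}$ has a decomposition of the diagonal, after passing to a suitable field extension $L$ (the function field of a ramified cover $C'\to C$) we have a decomposition
\[
[\Delta_{\VV_L}] = [\VV_L\times\{p_L\}] + [Z'] \quad\text{in } \Chow_*(\VV_L\times_L\VV_L)
\]
with $Z'$ supported on $D\times \VV_L$ for some divisor $D$. The key point is that $\zeta = \sigma_{\XX'}([\Delta_{\VV_L}]) - \sigma_{\XX'}([\VV_L\times\{p_L\}]) = \sigma_{\XX'}([Z'])$, so the whole problem is to understand the specialization of the ``tail'' $[Z']$ and verify it can be written as $\frac1r\sum_i \nu_* A_i$ for a prelog cycle $(A_i)$ whose components avoid dominating $V$ under $\mathrm{pr}_1\circ\varrho$.

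First I would apply Proposition \ref{pSpecializationIntoSat} to the (generally singular) family $\XX'\to C'$: this shows directly that $\sigma_{\XX'}$ takes values in $\Chowprelogsat_{,*}(X)$ after modding out torsion, which already produces the rational prelog data. Concretely, the proof of that Proposition constructs, for an irreducible $V$ not contained in $X$, classes $\gamma_i = (1/N)\, D_i\cdot V$ on $X_i$ lying in $R(X)^{\Q}$, with $\nu_*(\gamma_i)$ equal to $\sigma_{\XX'}(V)$ in $\Chow_*(X)/(\text{torsion})$. Applying this to the cycle $Z'$ (rather than the full diagonal) and clearing denominators by the integer $N$ gives integral cycles $A_i := N D_i\cdot Z'$ on $X_i$, so that $\frac1r\sum_i \nu_* A_i = \zeta$ with $r = N^2$ (or whatever common multiple the construction dictates); this is condition (2), and the membership $(\gamma_i)\in R(X)^{\Q}$ gives the numerical prelog condition (a) of Definition \ref{dPrelogCycle}, i.e.\ part of condition (1). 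The refinement of (1) beyond Proposition \ref{pSpecializationIntoSat} is the set-theoretic compatibility (b), which should follow because the supports $|A_i|\cap X_{ij}$ and $|A_j|\cap X_{ij}$ both arise as the intersection of the single cycle $|Z'|$ (closed up in $\XX'$) with the codimension-two stratum over $t_0'$, hence coincide.

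The crucial geometric input, and the step I expect to be the main obstacle, is condition (3): none of the $A_i$ may dominate a component of $V$ under $\mathrm{pr}_1\circ\varrho$. This is exactly where the hypothesis that $Z'$ is supported on $D\times\VV_L$ for a divisor $D$ enters. Because $\mathrm{pr}_1(Z')\subseteq D$ is contained in a proper subvariety of $\VV_L$, its closure in $\XX'$ (equivalently in $\VV\times_C\VV$) does not dominate the first factor; the specialization $A_i$ is built from intersecting this closure with the central fibre, and since intersection with Cartier divisors cannot increase the dimension of the image under a fixed morphism, the image $\mathrm{pr}_1\circ\varrho(|A_i|)$ remains inside (the specialization of) $D$ and so fails to dominate any component of $V$. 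I would need to argue carefully that closing up and specializing commutes with the projection in the required sense --- that is, that the non-dominance of $Z'$ over the first factor is preserved under $\sigma_{\XX'}$ through the birational model $\varrho$. This is plausible because $\varrho$ is an isomorphism away from the central fibres, so $\mathrm{pr}_1\circ\varrho$ agrees with the original projection on the generic fibre, and the dimension bound on images propagates to the limit; nonetheless making this precise, keeping track of which components $X_i$ actually receive nonzero $A_i$, is the delicate part of the argument.

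Finally I would assemble the pieces: the tuple $(A_i)$ constructed above satisfies (1) by the prelog and set-theoretic compatibility just verified, (2) by the divisibility and the identity $\frac1r\sum_i\nu_* A_i = \zeta$ coming from Proposition \ref{pSpecializationIntoSat}, and (3) by the non-dominance inherited from the tail $Z'$. Hence $X$ has a prelog decomposition of the diagonal relative to $\XX\to C$, as claimed.
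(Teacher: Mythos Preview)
Your approach is the paper's approach: pass to a finite extension $L$, take the closure $\ZZ$ of the tail cycle in $\XX'$, set $A_i$ equal to the intersection of $\ZZ$ with the $\Q$-Cartier divisors $rX_i$, and verify (1)--(3). Three places deserve tightening.

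First, for condition (1)(a) it is not enough to cite $(\gamma_i)\in R(X)^{\Q}$, since Definition~\ref{dPrelogCycle} asks for the \emph{integral} prelog condition. What the proof of Proposition~\ref{pSpecializationIntoSat} actually establishes (and what the paper spells out again here via the local model $xy=t^r$) is that $D_i=rX_i$ restricted to $X_j$ is the Cartier divisor $X_{ij}$ \emph{without multiplicity}, so that $\ZZ\cdot(rX_i)\cdot(rX_j)=A_i\cdot[X_{ij}]$ already integrally. In particular your extra factor of $N$ is superfluous: $A_i:=D_i\cdot\ZZ$ works with $r=N$, not $N^2$.

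Second, condition (2) demands divisibility of $\sum_i\nu_*A_i$ by $r$ \emph{as a cycle}, not just as a class. This holds because no component of $\ZZ$ lies in $X$, so each $D_i\cdot\ZZ$ is an honest cycle and $\sum_i D_i\cdot\ZZ=(rX)\cdot\ZZ=r\,(X\cdot\ZZ)$ by additivity of Cartier intersection at the level of cycles; you should say this.

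Third, you are overestimating (3). No dimension argument or delicate commutation is needed: it is a pure inclusion of supports. Since $\ZZ$ is the closure in $\XX'$ of something contained in $\DD\times_{C'}\VV'$, one has $\varrho(|\ZZ|)\subset\overline{\DD\times_{C'}\VV'}\subset\VV'\times_{C'}\VV'$; hence $\mathrm{pr}_1\circ\varrho(|A_i|)\subset\mathrm{pr}_1\bigl(\overline{\DD\times_{C'}\VV'}\cap(V\times V)\bigr)\subset\overline{\DD}\cap V$, a divisor in $V$. That is the paper's entire argument for (3).
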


\begin{proof}
Since $\VV_{\bar{K}}$ has a (Chow-theoretic) decomposition of the diagonal, there is a finite extension $L\supset K$ corresponding to a ramified cover $C'\to C$ such that $\VV_{L}:=\VV_K\times_KL$ has a decomposition of the diagonal. This means that there is a cycle $Z_L$ on $\VV_L\times \VV_L$ and an $L$-rational point $p_L$ of $\VV_L$ such that 
\[
[\Delta_{\VV_L}] = [\VV_L \times \{p_L\}] + [Z_L]
\]
and $Z_L$ does not dominate $\VV_L$ via the first projection.

Consider $\pi_{\XX'}\colon \XX'\to C'$ and a point $t_0'$ mapping to $t_0$. Again identify $\XX'_{t_0'}$ with $X$. The cycle $Z_L$ corresponds to a relative cycle $\ZZ_U$ over some open subset $U$ in $C'$. The image of the specialisation map applied to $Z_L$ is obtained as follows: take the closure $\ZZ$ of $\ZZ_U$ in $\XX'$ and intersect this with $X$, which is a Cartier divisor since 
\[
X =\pi_{\XX'}^{-1}(t_0'). 
\]
This gives a well-defined cycle class in $\Chow_* (X)$ which is $\sigma_{\XX'}(Z_L)$. Since $\XX'\to C'$ is a base change of a strictly semistable family $\XX\to C$, in particular, $\XX$ is smooth, all components $X_i$ of $X$ are $\mathbb{Q}$-Cartier on $\XX'$; hence we can find a natural number $r$ such that every $rX_i$ is Cartier on $\XX'$. More precisely, if the map of germs of pointed curves $(C', t_0') \to (C, t_0)$ is locally given by $t\mapsto t^s$, then locally around a general point on the intersection of two components of $X$, the family $\XX$ is given by $xy -t =0$, and the family $\XX'$ by $xy-t^s=0$, and we can choose $r=s$.

 We can define cycles $A_i$ on $X_i$ by intersecting $rX_i$ with $\ZZ$. The properties (a) and (b) of Definition \ref{dPrelogCycle} are then satisfied. For (b) this is clear by construction. For (a) note that we have 
 \[
 \ZZ . (rX_i).(rX_j) = \ZZ . (rX_j).(rX_i)
 \]
 as cycle classes on $\ZZ\cap X_i\cap X_j$. Let us interpret the left hand side: firstly $\ZZ.(rX_i) =A_i$, a cycle class supported on $X_i$. To intersect this further with $rX_j$, one has to restrict the Cartier divisor $rX_j$ to $A_i$. To do this one can first restrict $rX_j$ to $X_i$ to get a Cartier divisor on $X_i$ which one then restricts further to $A_i$. Now $rX_j$ restricted to $X_i$ is nothing but the Cartier divisor $X_{ij}$ on $X_i$: from the local equations above, if $X_i$ is locally given by $x=t=0$ and $X_j$ by $y=t=0$, and the Cartier divisor $rX_j$ by $y=0$ on $\XX'=(xy-t^r=0)$, then restricting the equation $y=0$ to $x=t=0$ gives the Cartier divisor $X_{ij}=(x=y=t=0)$ on $X_i=(x=t=0)$. Thus $\ZZ . (rX_i).(rX_j)  = A_i .[X_{ij}]$ and analogously $\ZZ . (rX_j).(rX_i) = A_j. [X_{ij}]$. This shows (a). Hence Definition \ref{dPrelogDecompositionDiagonal}(1) holds.
 
  The cycle 
\[
\sum_{i\in I} \nu_* A_i
\]
is by construction equal \emph{as a cycle} to $r$ times the cycle obtained by intersecting $\ZZ$ with $X$ (since they are equal as cycle classes supported on $|\ZZ|\cap |X|$ by Fulton's refined intersection theory). Hence Definition \ref{dPrelogDecompositionDiagonal}(2) holds. 

For the property Definition \ref{dPrelogDecompositionDiagonal}(3), notice that there exists a relative divisor $\DD_U$ in $\VV'$ over $U$, where $\VV'\to C'$ is obtained via base change from $\VV\to C$, such that $\ZZ_U$ is contained in $\DD \times_{C'} \VV'$. Then $\ZZ \cap X$ is contained in the intersection of the closure of $\DD \times_{C'} \VV'$ in $\XX'$ with $X$, which maps to the intersection of the closure of $\DD \times_{C'} \VV'$ in $\VV'\times_{C'}\VV'$ with $V\times V$ under $\varrho$. 
\end{proof}

\begin{remark}\label{rAI}
Notice that the class
\[
\left[ \frac{1}{r} \sum_{i\in I} \nu_* A_i \right]
\]
is an element of $\Chowprelogsat (X)$. Moreover, the classes in $\Chowprelogsat (X)$ arising in this way from prelog cycles $(A_i)$ satisfying (1), (2), (3) in Definition \ref{dPrelogDecompositionDiagonal} form a subgroup $\Chowprelognotdominant (X)$ of $\Chowprelogsat (X)$. With this $X$ has a prelog decomposition of the diagonal relative to the given family $\XX\to C$ if $\zeta \in \Chowprelognotdominant (X)$, where we defined $\zeta = \sigma_{\XX'}([\Delta_{\VV_L}]) - \sigma_{\XX'}([\VV_L \times \{p_L\} ]) $.
\end{remark}

\begin{remark}\label{rStrengthening}
In the situation of Theorem \ref{tPrelogDecompositionInherited} we can also assume more strongly that the central fibre $X$ has a prelog decomposition of the diagonal relative to $\XX\to C$ satisfying the following additional condition: every connected component of the cycle $\sum_i \nu_* A_i$ of Definition \ref{dPrelogDecompositionDiagonal} satisfies Condition (1) of that Definition separately. 
\end{remark}

\section{Saturated prelog Chow groups of degenerations of cubic surfaces}\label{sExampleCubic}

In this section we consider a degeneration of a smooth cubic surface $S$ into three planes. More precisely let $f=0$ be an equation of $S$ and let
\xycenter{
	\YY  \ar[d] \ar[r] & \P^3 \times \A^1 \ar[d] \\
	\A^1 \ar@{=}[r] & \A^1
}
be the degeneration defined by
\[
	xyz - tf = 0.
\]
Here $x,y,z,w$ are coordinates of $\P^3$ and $t$ is the coordinate of $\A^1$. The central
fiber $Y$ of $\YY$ consists of three coordinate planes. The singularities of $\YY$ lie
on the intersection of two of these planes with $f=0$. We assume $f$ to be generic enough, such that these are three distinct points on each line, none of them in the origin. Figure \ref{fThreePlanes}
depicts this situation.

\begin{figure}[h]
	\centering
	\includegraphics[scale=0.45]{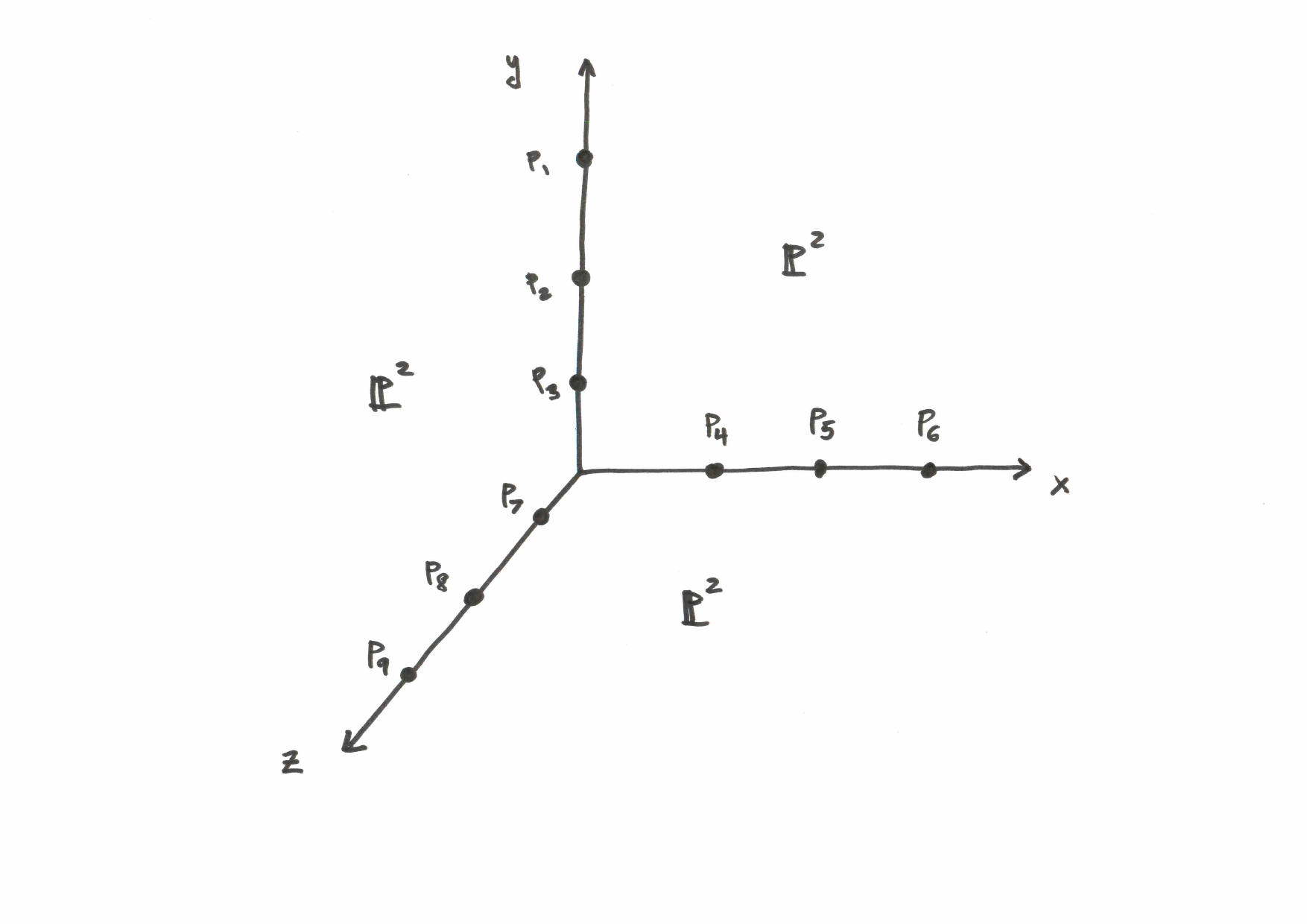}
	\caption{}\label{fThreePlanes}
\end{figure}

To desingularize $\YY$ we first blow up the $z=t=0$ plane in $\YY$. In the central fiber this
amounts to blowing up $P_1,\dots,P_6$ in this plane. Next we blow up the strict transform
of the $x=t=0$ plane which amounts to blowing up $P_7,P_8,P_9$ in that plane. The third plane
is left unchanged.
We thus obtain
a strictly semistable degeneration 
\[
	\pi \colon \XX \to \A^1
\]
whose central fiber $X = X_1 \cup X_2 \cup X_3$ is skeched in Figure \ref{fThreeBlownUpPlanes}, where we
have denoted the exceptional
divisor over $P_i$ by $E_i$.

\begin{figure}[h]
	\centering
	\includegraphics[scale=0.45]{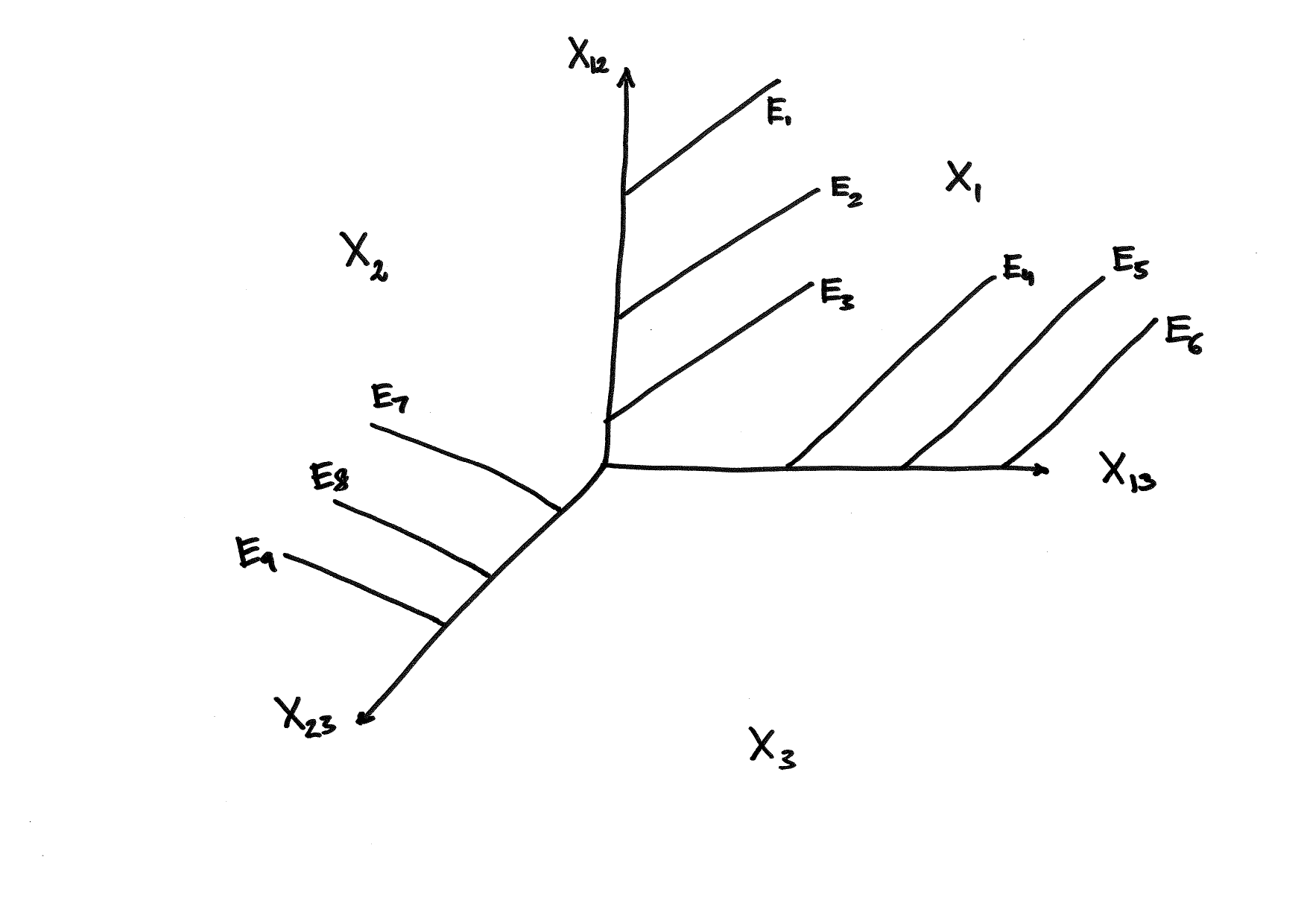}
	\caption{}\label{fThreeBlownUpPlanes}
\end{figure}

\begin{proposition}\label{pCubicSurfaceDegeneration}
 In this situation
\[
	\Chow_{\mathrm{prelog}}^1(X) = \Chow_{\mathrm{prelog,sat}}^1(X) = \Z^7.
\]
which nicely coincides with $\Chow^1(S)$.
\end{proposition}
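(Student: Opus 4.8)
The plan is to compute $\Chow_{\mathrm{prelog}}^1(X)$ directly from the calculation scheme of Proposition \ref{pBothmer}, using the explicit geometry of the three blown-up planes. First I would record the geometry of the components: $X_3$ is the unchanged plane $\cong\P^2$, $X_2$ is $\P^2$ blown up at the three points $P_7,P_8,P_9$, and $X_1$ is $\P^2$ blown up at $P_1,\dots,P_6$. I would then write down the pairwise intersection curves $X_{ij}=X_i\cap X_j$ (each a rational curve inside the relevant planes, meeting the exceptional divisors $E_i$ appropriately) and the triple-intersection points $X_{ijk}$. For each component I would fix a basis of $\Chow^1(X_i)$: for $X_3$ the hyperplane class; for $X_2$ the hyperplane class together with $E_7,E_8,E_9$; for $X_1$ the hyperplane class together with $E_1,\dots,E_6$. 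This gives $\bigoplus_i\Chow^1(X_i)=\Z^{1+4+7}=\Z^{12}$.

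Next I would make the prelog condition explicit. By Proposition \ref{pBothmer}, $R(X)$ in codimension $1$ is the kernel of $\rho$, i.e.\ the tuples $(\alpha_1,\alpha_2,\alpha_3)$ whose pullbacks to each $X_{ij}$ agree. I would compute the restriction maps $\iota^*_{\{i,j\}>\{i\}}\colon \Chow^1(X_i)\to\Chow^1(X_{ij})\cong\Z$ (degree on the rational intersection curve, adjusted by how the exceptional divisors $E_k$ meet $X_{ij}$). Matching these across the three gluing curves yields a system of linear equations on the $12$ generators, and I would solve it to obtain $R(X)$ as an explicit sublattice. Then I would identify $\ker(\nu_*\mid_{R(X)})$ via the description of $\delta$ in Proposition \ref{pKernelNu}: a class lies in the kernel precisely when it is a boundary $\delta(z_{ij})$, i.e.\ comes from pushing a curve class on some $X_{ij}$ into the two adjacent components with opposite signs. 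Quotienting $R(X)$ by this image gives $\Chow_{\mathrm{prelog}}^1(X)$, which I expect to come out as $\Z^7$ after the bookkeeping, matching $\Chow^1(S)=\Pic(S)\cong\Z^7$ for a smooth cubic surface.

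Finally, for the saturation statement I would argue that the image of $\Chow_{\mathrm{prelog}}^1(X)$ in $\coker(\delta)/(\text{torsion})$ is already saturated, so that $\Chow_{\mathrm{prelog,sat}}^1(X)=\Chow_{\mathrm{prelog}}^1(X)$. Concretely, once $\Chow_{\mathrm{prelog}}^1(X)$ is identified as a free $\Z^7$ sitting inside the torsion-free lattice $\coker(\delta)$, I would check that the quotient of the ambient lattice by this subgroup is itself torsion-free, e.g.\ by exhibiting that the chosen generators extend to a basis of a saturated sublattice or by computing the relevant elementary divisors. The main obstacle will be the careful tracking of the exceptional divisors $E_i$ through the restriction maps to the intersection curves $X_{ij}$: the blow-ups at $P_1,\dots,P_9$ alter the intersection pattern of each plane with its neighbors, and getting the signs and the multiplicities in $\rho$ and $\delta$ exactly right—so that the prelog condition cuts the rank down from $12$ to precisely $7$ and no spurious torsion or saturation defect appears—is where the real work lies. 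A useful consistency check at the end is that the seven prelog generators should, via the specialization of Theorem \ref{pSpecializationIntoChowPrelog} applied to Example \ref{edegnormalcone}-type degenerations, correspond to the classes that reassemble into the $27$ lines on $S$, as anticipated in the introduction.
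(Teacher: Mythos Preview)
Your proposal is correct and follows essentially the same route as the paper: set up $\bigoplus_i\Chow^1(X_i)\cong\Z^{12}$, compute $R(X)=\ker\rho$ from the restriction maps to the three rational curves $X_{ij}$, and then pass to the image under $\nu_*$ (equivalently, mod out by $\im\delta$), checking that the resulting sublattice of $\coker(\delta)$ is free of rank $7$ and saturated. The only minor slip is the phrase ``the prelog condition cuts the rank down from $12$ to precisely $7$'': the prelog condition alone (three independent conditions, since $\rho$ is surjective onto $\Z^3$) gives $R(X)\cong\Z^9$, and it is the further quotient by $\im\delta\cap R(X)$ that brings the rank to $7$; the paper makes this two-step structure explicit by writing down the matrices for $\rho$ and $\delta$ and computing the rank and elementary divisors of the composite $\Z^9\hookrightarrow\Z^{12}\to\Z^9$.
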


\begin{proof}
For the convenience of the reader we give a slow walk through the necessary computations.
A Macaulay2 script doing the same work is available at \cite{BBG-M2}.

We calculate the codimension $1$ part of $\Chowprelog(X)$. Here we have
\[
	\bigoplus_{i=1}^3 \Chow_1(X_i) 
	= \langle H_1,E_1,\dots,E_6,H_2,E_7,\dots,E_9,H_3 \rangle
	= \Z^{12}
\]
The intersections of two components is always a $\P^1$ whose Chow group
we can identify with $\Z$ via the degree map. 
\[
	\bigoplus_{1 \le i < j \le 3} \Chow_1(X_{ij}) = \Z^3.
\]
Finally $X_{1,2,3}$ is a point with $\Chow^0(X_{1,2,3}) = \Z$. The diagram \ref{pBothmer} is
therefore 

\xycenter{
  & 0 \ar[d] & 0 \ar[d] & \\
  & R(X) \ar[d] \ar[r] & \mathrm{CH}^1_{\mathrm{prelog}}(X) \ar[d] \ar[r] & 0 \\
 \Z^3\ar[d]^-{\rho'} \ar[r]^\delta
& \Z^{12} \ar[r]^-{\nu_*} \ar[d]^-\rho
&\Chow_1(X) \ar[r]
& 0 \\
\Z \ar[r]^-{\delta'}
&\Z^3
 }

Now notice that the image of $X_{1,2}$ in $X_1$ has the class
\[
	[\ioUpper{1,2}{1}(X_{1,2})] = H_1-E_1-E_2-E_3,
\]
while its image in $X_2$ has the class
\[
	[\ioUpper{1,2}{2}(X_{1,2})] = H_2.
\]
Similarly
\begin{align*}
	[\ioUpper{1,3}{1}(X_{1,3})] &= H_1-E_4-E_5-E_6, \\
	[\ioUpper{1,3}{3}(X_{1,3})] &= H_3, \\
	[\ioUpper{2,3}{2}(X_{2,3})] &= H_2-E_7-E_8-E_9, \\
	[\ioUpper{2,3}{3}(X_{2,3})] &= H_3.
\end{align*}	
With this we obtain
\[
	\delta = {\left({\begin{array}{cccccccccccc}
      1&{-1}&{-1}&{-1}&0&0&0&{-1}&0&0&0&0\\
      1&0&0&0&{-1}&{-1}&{-1}&0&0&0&0&{-1}\\
      0&0&0&0&0&0&0&1&{-1}&{-1}&{-1}&{-1}\\
      \end{array}}\right)}.
\]
Here for example the first line says that $X_{1,2}$ is mapped to $(H_1-E_1-E_2-E_3)-H_2$.
We also get
\[
	\rho =  {\left({\begin{array}{cccccccccccc}
      1&1&1&1&0&0&0&{-1}&0&0&0&0\\
      1&0&0&0&1&1&1&0&0&0&0&{-1}\\
      0&0&0&0&0&0&0&1&1&1&1&{-1}\\
      \end{array}}\right)}^t.
\]
Here the first column says that $H_1$ intersects $X_{1,2}$ and $X_{1,3}$ but not $X_{2,3}$.
Notice also that the sign convention says that the restriction of a divisor on $X_1$ is positive on 
both $X_{1,2}$ and $X_{1,3}$ because $1$ is the smaller index in both cases. A divisor 
in $X_2$ is restricted with negative sign to $X_{1,2}$ but with positive sign to $X_{2,3}$.  A divisor
on $X_3$ is restricted with negative sign to both $X_{1,3}$ an $X_{2,3}$. 
Finally we get
\[
	\delta' = \begin{pmatrix}
      {-1}&1&{-1}\end{pmatrix} 
      \quad \text{and} \quad
      	\rho =  \begin{pmatrix}
      1&{-1}&1\end{pmatrix}^t.
\]
A good check that we got everything right is that indeed $\delta\rho = \rho' \delta'$ (this is
the Friedman condition).

Now one can check that $\delta$ is injective and that the image of $\delta$ 
is saturated in $\Z^{12}$. This can be done for example by checking that the gcd of the
$3 \times 3$ minors is $1$. Similarly one can check that $\rho$ is surjective.

This shows that the diagram reduces to
\xycenter{
 & 0 \ar[d] & 0 \ar[d] & \\
 & \Z^9 \ar[d]^{\varphi} \ar[r] & \mathrm{CH}^1_{\mathrm{prelog}}(X) \ar[d] \ar[r] & 0 \\
 \Z^3\ar[d]^-{\rho'} \ar[r]^\delta
& \Z^{12} \ar[r]^-{\nu_*} \ar[d]^-\rho
&\Z^9\ar[r]
& 0 \\
\Z \ar[r]^-{\delta'}
&\Z^3
 }

We can compute a representative matrix for $\varphi$ by calculating 
generators for the kernel of $\rho$. Since the image of $\delta$ is saturated
in $\Z^{12}$ a representative matrix for $\nu_*$ is obtained by 
\[
	\ker(\delta^t)^t.
\]
We can thus calculate $\varphi \nu_*$. One can check that this matrix has rank $7$ and
the image is saturated in $\Z^9$. Therefore
\[
	\Chow_{\mathrm{prelog}}^1(X) = \Chow_{\mathrm{prelog,sat}}^1(X) = \Z^7.
\]
\end{proof}

\begin{figure}[h]
	\centering
	\includegraphics[scale=0.45]{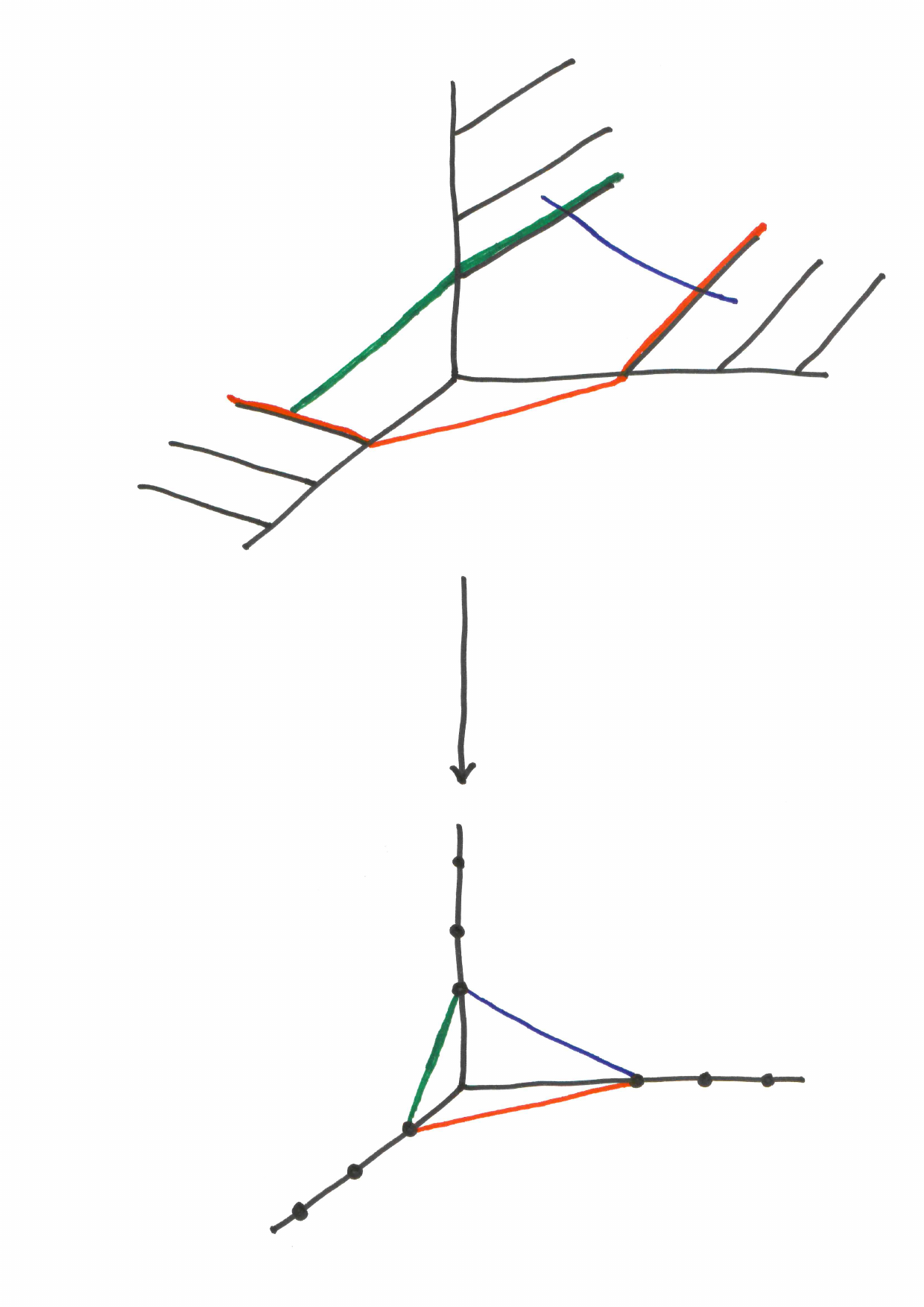}
	\caption{The 27 prelog lines of Proposition \ref{PropositionPrelogLines}.}\label{fPrelogLines}
\end{figure}

We now recall the classical log-geometric count of the 27 lines on a cubic surface. Assume that
we have a degeneration
%\xycenter{
%	& \XX \ar[dr] \\
%	\LL \ar[ru] \ar[dr]  \ar[rr] && \YY \ar[dl] \\
%	 & \A^1
%}
\xycenter{
	\LL \ar[r] \ar[dr] & \XX \ar[d] \ar[r]& \YY \ar[dl] \\
	 & \A^1
}
with the generic fiber of $\LL$ a line in a cubic surface. The special fiber $L \subset Y$ is then a line
in one of the coordinate planes. Its preimage $L \subset X$ is a prelog cycle. Because of the prelog 
condition $L$ cannot intersect the coordinate lines outside of the singular points. Therefore we have on 
each of the 3 planes $3 \times 3$ possibilities to connect $2$ singularities on the $2$ adjacent coordinate lines. So in total we have $3 \cdot 3 \cdot 3 = 27$ possible ``lines'' on $Y$. Using log geometry one can 
then also prove that each of these lines indeed comes from a line on $S$ and that each line in $Y$ occurs only once as a limit. 

Here we exhibit the $27$ prelog cycles on $X$ that map to the above $27$ lines on $Y$:

\begin{proposition}\label{PropositionPrelogLines}
The following $27$ cycles on $X$ are prelog cycles (see Figure \ref{fPrelogLines}):
\begin{align*}
	(H_1-E_i-E_j,0,0) &\quad i\in\{1,2,3\}, j\in\{4,5,6\} \\
	(E_i,H_2-E_j,0) &\quad i\in\{1,2,3\}, j\in\{7,8,9\} \\
	(E_i,E_j,H_3) &\quad i\in\{4,5,6\}, j\in\{7,8,9\}
\end{align*}
These are the preimages in $X$ of lines connecting two singularities of $\YY$ in $Y$.
One can choose $7$ such cycles that generate $\Chow^1_{\mathrm prelog}(X)$.
\end{proposition}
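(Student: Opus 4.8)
The plan is to split the statement into two independent tasks: verifying that all $27$ tuples satisfy the prelog condition, and extracting seven of them that generate $\Chow^1_{\mathrm{prelog}}(X)\cong\Z^7$. For the first task I would make the restriction maps completely explicit. Since each double curve $X_{ij}\cong\P^1$ has $\Chow^1(X_{ij})=\Z$, the map $\ioUpper{i,j}{i}\colon\Chow^1(X_i)\to\Chow^1(X_{ij})$ is intersection with $[X_{ij}]$ on the surface $X_i$, computed from the blow-up relations $H_k^2=1$, $H_k\cdot E_\ell=0$, $E_\ell\cdot E_m=-\delta_{\ell m}$. Feeding in $[X_{1,2}]=H_1-E_1-E_2-E_3$ and $[X_{1,3}]=H_1-E_4-E_5-E_6$ on $X_1$, $[X_{1,2}]=H_2$ and $[X_{2,3}]=H_2-E_7-E_8-E_9$ on $X_2$, and $[X_{1,3}]=[X_{2,3}]=H_3$ on $X_3$, one recovers precisely the matrix $\rho$ of Proposition \ref{pCubicSurfaceDegeneration}, and by Definition \ref{dRingOfCompatibleClasses} condition (a) of Definition \ref{dPrelogCycle} is exactly membership in $\ker\rho$. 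Each family is then a one-line check: for $(H_1-E_i-E_j,0,0)$ with $i\in\{1,2,3\}$, $j\in\{4,5,6\}$ one has $\ioUpper{1,2}{1}(H_1-E_i-E_j)=1-1-0=0$ and $\ioUpper{1,3}{1}(H_1-E_i-E_j)=1-0-1=0$, matching the zero components on $X_2,X_3$; the families $(E_i,H_2-E_j,0)$ and $(E_i,E_j,H_3)$ are identical bookkeeping.

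Next I would settle the set-theoretic compatibility (b) of Definition \ref{dPrelogCycle} by choosing geometric representatives. The class $H_1-E_i-E_j$ is represented by the strict transform of the line in the $z=0$ plane through the two nodes $P_i\in X_{1,2}$ and $P_j\in X_{1,3}$; because a line meets a double curve transversally at a single node, blowing up that node separates the two strict transforms, so this representative is disjoint from $X_{1,2}$ and $X_{1,3}$ set-theoretically, matching the empty supports of the zero cycles on the other components. The exceptional and $H_3$ representatives in the other two families are handled the same way. This shows (b) and completes the proof that all $27$ tuples are genuine prelog cycles.

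For the generation claim I would pass to $\Chow^1_{\mathrm{prelog}}(X)$, which the proof of Proposition \ref{pCubicSurfaceDegeneration} realizes as the rank-$7$ saturated image of $R(X)$ inside $\coker(\delta)=\Z^{12}/\im\delta$; concretely the image of a prelog cycle is its class modulo the three relations $\delta(X_{1,2}),\delta(X_{1,3}),\delta(X_{2,3})$. I would compute the resulting $27$ vectors and then select seven of them modelled on a basis of the $27$ lines in $\Pic$ of a smooth cubic surface — six of exceptional type together with one of $H$-type, in analogy with $e_1,\dots,e_6,\ell-e_1-e_2$ — and verify that the $7\times7$ integer matrix expressing them in a basis of $\Chow^1_{\mathrm{prelog}}(X)$ has determinant $\pm1$, equivalently that their Smith normal form is the identity. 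This is the finite $\Z$-linear-algebra computation carried out in \cite{BBG-M2}, and it simultaneously reproduces the equality $\Chow^1_{\mathrm{prelog}}(X)=\Z^7$ from explicit generators.

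The only genuine obstacle is the last step: one must track carefully which exceptional classes restrict nontrivially to which double curve and then reduce modulo $\im\delta$, since one needs a \emph{unimodular} septuple and not merely one of full rank (a spanning subset of a free abelian group need not contain a basis). The reassuring cross-check is that, under the identification with $\Chow^1(S)$ noted after Proposition \ref{pCubicSurfaceDegeneration}, the $27$ prelog cycles correspond to the $27$ lines and the chosen seven to seven lines forming a $\Z$-basis of $\Pic(S)$; this predicts the answer, but the honest verification remains the determinant computation above.
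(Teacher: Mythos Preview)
Your proposal is correct and follows essentially the same approach as the paper: both verify the prelog condition by computing intersection numbers with the double-curve classes $H_1-E_1-E_2-E_3$, $H_1-E_4-E_5-E_6$, $H_2$, $H_2-E_7-E_8-E_9$, $H_3$ (your $\rho$-kernel check is exactly the paper's argument), and both defer the generation claim to the Macaulay2 computation in \cite{BBG-M2}. Your treatment is slightly more thorough in that you also address the set-theoretic compatibility (b) of Definition~\ref{dPrelogCycle} via explicit representatives, whereas the paper checks only the class-level condition (a); this is a harmless elaboration rather than a different route.
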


\begin{proof}
In the first case $H_1-E_i-E_j$ intersects neither $H_1-E_1-E_2-E_3$, the images of $X_{1,2}$
in $X_1$, nor $H_1-E_4-E_5-E_6$, the image of $X_{1,3}$ in $X_1$. Therefore there need not be cycles on $X_2$ or $X_3$ matching with it on $X_{1,2}$ or $X_{1,3}$.

In the second case $E_j$ intersects the image of $X_{1,2}$ but not the image of $X_{1,3}$ on $X_1$. At the same time $H_2-E_j$ intersects $X_{1,2}$ but not $X_{2,3}$ on $X_2$. Therefore there need not be a cycle on $X_3$ matching with them on the intersection. A similar reasoning proves the third case. 

For a given set of $7$ lines one can check the assertion by calculating their images in $\Chow^1(X) = \Z^9$ and check whether they form a saturated sublattice of rank $7$. A computer program can easily find
a set of $7$ cycles that works. One such is exhibited in \cite{BBG-M2}.
\end{proof}

\section{Degenerations of self-products of elliptic curves and their prelog Chow rings}\label{sEllipticCurves}

We continue with the example of the self-product of an elliptic curve that will serve as an illustration of the degeneration method used to prove stable irrationality. 
Certainly a smooth elliptic curve $E$ is not stably rational and does not have a decomposition of the diagonal. To see the latter, suppose you could write, for rational or even just homological equivalence,
\[
\Delta_E = E\times p + \sum_i a_i (q_i \times E)
\]
with $p, q_i$ points on $E$ and $a_i\in \Z$. We restrict to homological equivalence for simplicity. Intersecting both sides of the equation with $E\times q$ for another point $q\neq p$, we find that $q = \sum a_i q_i$ and we may thus assume that 
\[
\Delta_E = E\times p + q \times E.
\]
%Then (1) intersecting both sides with $E\times p'$ for another point $p'$, we would find (for homological equivalence) that $\Delta_E = E\times p + q\times E$ with $q$ one point. Then (2) 
Then, intersecting this equation with the cycle $T = \{ (x, x + p_0) \mid x\in E\}$, where $+p_0$ is translation on $E$ by a point $p_0$ different from zero, gives a contradiction: the left hand side results in a class of degree $0$ and the right hand side in a class of degree $2$.

\begin{figure}[h]
	\centering
	\includegraphics[scale=0.45]{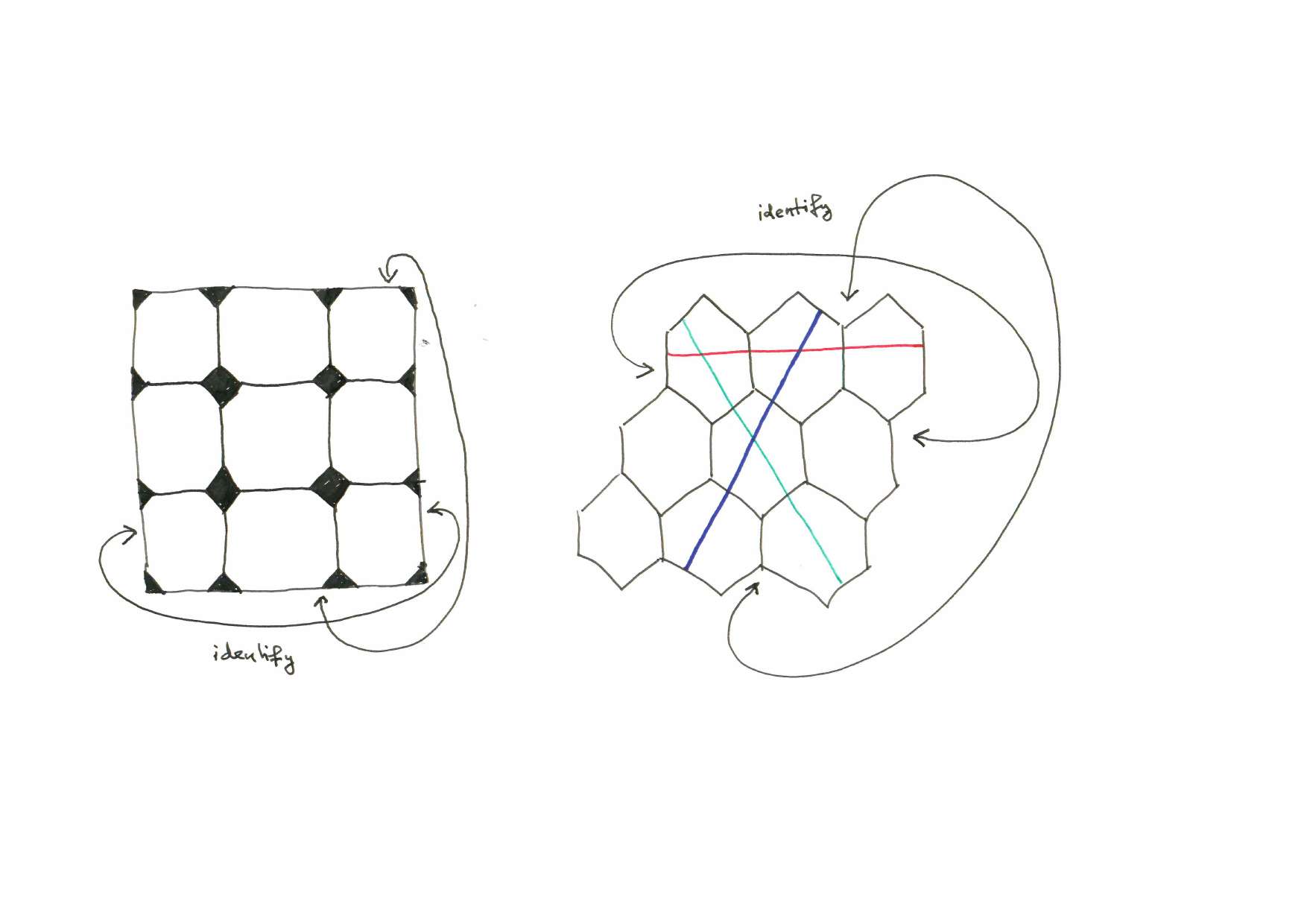}
	\caption{}\label{EllipticCurveDegeneration}
\end{figure}

Whereas this is well known and easy, it is nevertheless reassuring that we can also deduce it from Theorem \ref{tPrelogDecompositionInherited}, considering a degeneration $\pi_{\VV}\colon \VV \to C$ of plane cubic curves into a triangle of lines $V$ (we keep the notation of the preceding Section). Here $\pi_{\VV}$ is strictly semistable, but if we form $\VV\times_{C}\VV \to C$, the total space has singularities at points where four components of the central fibre intersect (which also then fails to be simple normal crossing).

This gives rise to nine nodes of the total space. Blowing these up, we get the picture schematically depicted on the left of Figure \ref{EllipticCurveDegeneration}: each node gets replaced by a $\P^1\times\P^1$ that is a component of the central fibre. However, as components of the new scheme-theoretic central fibre, these $\P^1\times\P^1$'s will have multiplicity $2$, and the resulting family is not strictly semistable. Therefore we contract the lines of one ruling of each of these quadrics to arrive at the picture on the right of Figure \ref{EllipticCurveDegeneration} (we contract in the ``North-West South-East direction" in the Figure). Here each hexagon is a $\P^2$ with three points blown up, and the intersection of two such ``hexagons" is a $(-1)$-curve in each of the surfaces. In this way we obtain a strictly semistable modification $\pi\colon \XX \to C$ of the product family. Here we do not follow the resolution scheme suggested in \cite[Prop 2.1]{Har01} as our method leads to a more symmetric central fibre. The green line in Figure \ref{EllipticCurveDegeneration} indicates the specialization of the diagonal, and the red line that of ``elliptic curve times a point". The blue line is the specialization of ``point times elliptic curve".

On each one of the hexagons, $\P^2$'s blown up in three points, there is thus a basis of the Picard group consisting of the pullback $H$ of the hyperplane class, and the three exceptional divisors $E_1, E_2, E_3$. One can identify the boundary components of each hexagon, proceeding in clockwise direction, with $E_1, H-E_1-E_2, E_2, H-E_2-E_3, E_3, H-E_1-E_3$.

\begin{proposition}\label{pPrelogSatElliptic}
The following hold:
\begin{enumerate}
\item
The classes of the red, green and blue lines in Figure  \ref{EllipticCurveDegeneration} generate $\mathrm{CH}_{\mathrm{prelog}}^1 (X)$ modulo torsion.
\item
The classes of the red, green and blue lines in Figure  \ref{EllipticCurveDegeneration} together with the half of their sum generate $\Chow_{\mathrm{prelog,sat}}^1(X)$.
\end{enumerate}
\end{proposition}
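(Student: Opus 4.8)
The plan is to set up the computation exactly as in the proof of Proposition~\ref{pCubicSurfaceDegeneration}, running through the diagram of Proposition~\ref{pBothmer} but now for the symmetric central fibre consisting of six ``hexagons'' (each a $\P^2$ blown up in three points) meeting pairwise along $(-1)$-curves. First I would fix notation: label the six components $X_1,\dots,X_6$ and, on each, take the basis $H, E_1, E_2, E_3$ of $\Chow^1(X_i)$, so that $\bigoplus_i \Chow^1(X_i) \cong \Z^{24}$. Using the stated identification of the six boundary $(-1)$-curves of each hexagon with the classes $E_1, H-E_1-E_2, E_2, H-E_2-E_3, E_3, H-E_1-E_3$, I would write down the restriction data: for each pairwise intersection $X_{ij}$ (a $\P^1$, whose Chow group is $\Z$ via degree), the two classes $\io{ij}{i}$ and $\io{ij}{j}$ pulling back the intersection curve. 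From this I assemble the matrix $\rho$ whose kernel is $R(X)$ (the prelog condition) and the matrix $\delta$ whose cokernel computes $\Chow_1(X)$, together with $\delta'$ and $\rho'$ on the triple intersections, and I would verify the Friedman identity $\delta\rho = \rho'\delta'$ as the sanity check that the combinatorics of signs and incidences is correct.

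Once the matrices are in place, the first assertion reduces to linear algebra over $\Z$: compute $R(X) = \ker \rho$, then the image of $R(X)$ in $\coker(\delta)/(\mathrm{torsion})$, which is $\mathrm{CH}^1_{\mathrm{prelog}}(X)$ modulo torsion. I would exhibit the red, green, and blue line classes explicitly as prelog tuples $(\alpha_i)_{i}\in R(X)$ (reading their components off Figure~\ref{EllipticCurveDegeneration}), push them into $\coker(\delta)$ via $\nu_*$, and check that these three vectors span the image modulo torsion, most cleanly by showing the quotient lattice has rank $3$ and that the three given classes generate it. For part~(2) the saturated group $\Chowprelogsat_{,*}(X)$ is by Definition~\ref{dSaturatedPrelogChowGroup} the saturation of that image inside $\coker(\delta)/(\mathrm{torsion})$, so I would compute the saturation of the rank-$3$ sublattice spanned by red, green, blue and show that the extra primitive vector needed is exactly half the sum of the three, i.e.\ that red${}+{}$green${}+{}$blue is divisible by $2$ in the saturation but not in the lattice they generate. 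Concretely, one checks that $\tfrac12(\text{red}+\text{green}+\text{blue})$ lies in $\coker(\delta)/(\mathrm{torsion})$ and, together with any two of the three lines, forms a $\Z$-basis of the saturation.

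The main obstacle is bookkeeping rather than conceptual: getting the six-fold symmetric incidence pattern, and in particular the signs dictated by the ordering convention $a<b$ in $\rho$ and $\delta$, correct for every one of the pairwise intersections, since an error in a single sign will break the Friedman check and silently corrupt the lattice computation. A secondary subtlety is that $\coker(\delta)$ may have torsion, so I must be careful to work consistently modulo torsion throughout and to verify that the divisibility phenomenon producing the half-sum in part~(2) is genuinely a saturation effect and not an artifact of torsion. As in Proposition~\ref{pCubicSurfaceDegeneration}, I would organize the verification so that the decisive facts---injectivity of $\delta$, saturatedness of $\im\delta$, surjectivity of $\rho$, and the ranks and saturations of the relevant images---can each be confirmed by a gcd-of-minors or Smith-normal-form computation, and I would point to the accompanying \texttt{Macaulay2} script at \cite{BBG-M2} for the explicit matrices, reserving the exposition here for the identification of the red, green, blue classes and the half-sum.
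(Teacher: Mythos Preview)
Your overall strategy is the same as the paper's: set up the diagram of Proposition~\ref{pBothmer}, assemble the matrices $\delta,\rho,\delta',\rho'$, verify the Friedman identity, and then do the lattice computation (in practice by \texttt{Macaulay2}, see \cite{BBG-M2}) to identify the rank-$3$ image modulo torsion and its saturation. The paper records the outcome in the form of the diagram with $\bigoplus_i\Chow^1(X_i)\cong\Z^{36}$, $\bigoplus_{i<j}\Chow^1(X_{ij})\cong\Z^{27}$, $\bigoplus\Chow^0(X_{ijk})\cong\Z^{18}$, $\coker(\delta)\cong\Z^{11}\oplus\Z/3$, and $R(X)\cong\Z^{11}$, and detects the half-sum by observing that the rank of the relevant $11\times 11$ matrix drops from $3$ to $2$ in characteristic~$2$.

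There is, however, a concrete slip in your bookkeeping that would derail the computation before it starts: the central fibre has \emph{nine} hexagonal components, not six. The family is the self-product of a degeneration of an elliptic curve into a triangle of three lines, so before modification the central fibre already has $3\times 3=9$ components; after blowing up the nine nodes and contracting one ruling of each exceptional quadric you are left with nine copies of $\P^2$ blown up in three points (cf.\ Figure~\ref{Picture42}, where three rows of three hexagons are drawn). Thus $\bigoplus_i\Chow^1(X_i)\cong\Z^{36}$, there are $27$ pairwise intersections and $18$ triple points, matching the paper's dimensions. With six components your matrices would have the wrong shape and the Friedman check would not even make sense combinatorially. Once you correct the count to nine, your plan goes through and coincides with the paper's proof; note also that your anticipated torsion in $\coker(\delta)$ is indeed present (a $\Z/3$ summand), so your caution about working modulo torsion is warranted.
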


\begin{proof}
Part a) is a computation of the same type as in the proof of Proposition \ref{pCubicSurfaceDegeneration} and can be found in \cite{BBG-M2}. We observe that in this case the diagram of Proposition \ref{pBothmer} is of the form

\xycenter{
 & 0 \ar[d] & 0 \ar[d] & \\
 & \Z^{11} \ar[d]^{\varphi} \ar[r] & \mathrm{CH}^1_{\mathrm{prelog}}(X) \ar[d] \ar[r] & 0 \\
 \Z^{27}\ar[d]^-{\rho'} \ar[r]^\delta
& \Z^{36} \ar[r]^-{\nu_*} \ar[d]^-\rho
&\Z^{11}\oplus \Z/3\ar[r]
& 0 \\
\Z^{18} \ar[r]^-{\delta'}
&\Z^{27}
 }
Let $\bar{\nu}_*$ be the composition of $\nu_*$ with the projection to $\Z^{11}$. The composition $\varphi\circ \bar{\nu}_*$ is an $11\times 11$ matrix representing $\mathrm{CH}_{\mathrm{prelog}}^1 (X)$ modulo torsion. A computation done in  \cite{BBG-M2} shows that the rank of this matrix is $3$ in every characteristic except $\mathrm{char}=2$ where it is $2$. The sum of the red, green and blue lines is divisible by $2$, which gives b).
\end{proof}

\begin{corollary}\label{cDecEll}
There is no prelog decomposition of the diagonal for $X$ relative to the family $\pi\colon\XX\to C$ satisfying the additional stronger condition of Remark \ref{rStrengthening}. 
\end{corollary}

\begin{proof}
The class $\zeta$ of Definition \ref{dPrelogDecompositionDiagonal} is the difference between the green and red lines, regardless of the cover $C'\to C$ we may have to pass to.
If $(A_i)$ is a prelog cycle on $X$ satisfying (3) of Definition \ref{dPrelogDecompositionDiagonal}, then applying $\varrho\circ \mathrm{pr}_1$ to $\sum_i \nu_* A_i$ is a union of points in $V$. We want to show that the cycle class of $\sum_i \nu_* A_i$ must be a multiple of the cycle class of the blue line. This will give a contradiction since the classes of the blue, green and red lines are independent over $\Z$. For this it is sufficient to consider the case where $\sum_i \nu_* A_i$ maps to one point $p$ in $V$. It is clear that the support of $\sum_i \nu_* A_i$ is contained in the fibre $(\varrho\circ \mathrm{pr}_1)^{-1}(p)$. We distinguish two cases: (1) $p$ is in the smooth part of the triangle of lines $V$; (2) $p$ is a vertex of that triangle.  
In case (1), we obviously have to take all components of the fibre with equal multiplicities to get a prelog cycle (which is depicted in Figure \ref{EllipticCurveDegeneration}, blue line). In case (2), consider Figure \ref{Picture42}.

\begin{figure}[h]
	\centering
\begin{tikzpicture}[smooth, scale=1.2]
%\draw[step=1cm,gray,very thin] (0,0) grid (6,6);
%
\node at (1.1,-0.1) {$\scriptstyle{\color{blue} A_6}$};
\node at (2.1,1.9) {$\scriptstyle{\color{blue} A_4}$};
\node at (2.9,3.75) {$\scriptstyle{\color{blue} A_2}$};
\node at (1.4,0.5) {$\scriptstyle{\color{blue} A_5}$};
\node at (2.4,2.5) {$\scriptstyle{\color{blue} A_3}$};
\node at (3.4,4.8) {$\scriptstyle{\color{blue} A_1}$};
\node at (4.6,5.2) {$\scriptstyle{\color{blue} B_6}$};
\node at (2.5,1.1) {$\scriptstyle{\color{blue} B_4}$};
\node at (3.7,3.3) {$\scriptstyle{\color{blue} B_2}$};
\node at (2.2,0.5) {$\scriptstyle{\color{blue} B_5}$};
\node at (3.2,2.5) {$\scriptstyle{\color{blue} B_3}$};
\node at (4.2,4.8) {$\scriptstyle{\color{blue} B_1}$};
\foreach \x in {2,4,6}
\draw[thick, red] (\x,4.5) to (\x+1.6,4.5);
\foreach \x in {2,3,4}
\draw[thick, lime] (\x+0.4,9.3-2*\x) to (\x+1.2,7.7-2*\x);
\foreach \x in {0,2,4}
\foreach \y in {0}
   \draw (\x,\y) to (\x+0.8,\y-0.6) to (\x+1.6,\y) to (\x+1.6,\y+1) to (\x+0.8,\y+1.6) to (\x,\y+1) to (\x,\y);
\foreach \x in {1,3,5}
\foreach \y in {2}
   \draw (\x,\y) to (\x+0.8,\y-0.6) to (\x+1.6,\y) to (\x+1.6,\y+1) to (\x+0.8,\y+1.6) to (\x,\y+1) to (\x,\y);
\foreach \x in {2,4,6}
\foreach \y in {4}
   \draw (\x,\y) to (\x+0.8,\y-0.6) to (\x+1.6,\y) to (\x+1.6,\y+1) to (\x+0.8,\y+1.6) to (\x,\y+1) to (\x,\y);
\foreach \x in {0,1,2}
   \draw [snake=snake, segment amplitude=.4mm, segment length=0.4mm] (\x,2*\x) -- (\x+0.8,2*\x-0.6);
\foreach \x in {0,1,2}
   \draw [snake=snake, segment amplitude=.4mm, segment length=0.4mm] (\x+0.8,2*\x+1.6) -- (\x+1.6,2*\x+1);
\foreach \x in {2,3,4}
   \draw [snake=snake, segment amplitude=.4mm, segment length=0.4mm] (\x,2*\x-4) -- (\x+0.8,2*\x-4.6);
\foreach \x in {2,3,4}
   \draw [snake=snake, segment amplitude=.4mm, segment length=0.4mm] (\x+0.8,2*\x-2.4) -- (\x+1.6,2*\x-3);
\foreach \x in {0,1,2}
   \draw[very thick, blue] (\x+0.8,2*\x-0.6) to (\x+1.6,2*\x) to (\x+1.6,2*\x+1);
\foreach \x in {2,3,4}
   \draw[very thick, blue] (\x+0.8,2*\x-2.4) to (\x,2*\x-3) to (\x,2*\x-4);
\end{tikzpicture}
	\caption{}\label{Picture42}
\end{figure}

The prelog cycle can be written as
\[
X=\sum_{i=1}^6 \alpha_i A_i + \sum_{j=1}^6 \beta_j B_j .
\]
The prelog condition at the shaded edges gives:
\begin{gather*}
\alpha_2=\alpha_3, \: \alpha_4=\alpha_5, \: \alpha_6=\alpha_1, \\
\beta_1=\beta_2, \: \beta_3 =\beta_4, \: \beta_5=\beta_6 .
\end{gather*}
Moreover, $\nu_* X$ is in the prelog Chow ring. Let $r, g$ be the intersection numbers of $\nu_* X$ with the element given by the red and green cycles, respectively. We have 
\begin{gather*}
\alpha_1 + \beta_1 = r 
\end{gather*}
and by Proposition \ref{pIntersectionPairingPrelog},  moving the red cycles to the second and third rows of hexagons (i.e., replacing them by rationally equivalent ones) we also get
\[
\alpha_3 + \beta_3 = \alpha_5+ \beta_5=  r .
\]
Similarly, considering the green cycles we find
\[
\alpha_2 +\beta_2 =\alpha_4+\beta_4 = \alpha_6+\beta_6 =g. 
\]
The equations imply that $r$ is equal to $g$ and all $\alpha_i$'s are equal to each other, and the same holds for the $\beta_j$'s.
\end{proof}

\appendix

\section{Interaction with the Gross-Siebert programme}\label{sTropicalDegenerations}

In this section we reformulate the two main examples of this paper in the language of the Gross-Siebert programme. This concerns the degeneration of a cubic surface and the degeneration of the self-product of an elliptic curve. Our goal here is to illustrate how the prelog Chow rings fit with the constructions of the Gross-Siebert programme. We do not give proofs as the results are already proven in the preceding sections. We introduce the concepts and construction of the Gross-Siebert programme that we need and refer the interested reader to \cite{CPS10,Gra20,GHS20} for more details.

While we do not strive for maximal generality, it will be clear how the examples naturally generalize to degenerations of hypersurfaces in smooth projective toric varieties and to degenerations of abelian varieties. The families are described by combinatorial data and we will see how 1-cycles in the general fiber correspond to tropical curves in the dual intersection complex.

For cubic surfaces, we start with the polytope of $\P^3$ polarized by $\mathcal{O}(3)$ as given by the convex hull in $\R^3$ of $(-1,-1,-1)$, $(2,-1,-1)$, $(-1,2,-1)$ and $(-1,-1,2)$. We take its integral regular subdivision obtained by adding the convex hull of each vertex with the edge joining $(-1,-1,-1)$ with $(0,0,0)$. This determines a degeneration of $\P^3$ into the union of three copies of $\P^3$ each polarized by $\mathcal{O}(1)$. Moreover, a generic section of $\mathcal{O}_{\P^3}(3)$ degenerates into the union of three hyperplanes.
This degeneration is however not strictly semistable, there are 9 \emph{focus-focus singularities}, 3 each on the intersection of the hyperplanes, cf.\ Figure \ref{fintercomplexcubic}. These singularities correspond to the locus where the family is not log smooth (and hence also not strictly semistable).

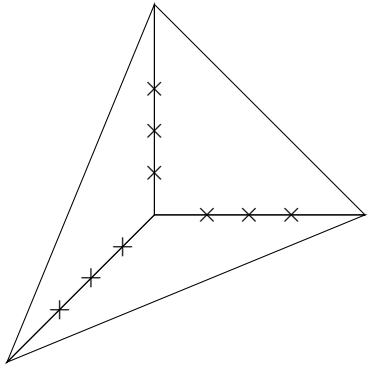
\begin{figure}[h]
\begin{tikzpicture}[smooth, scale=1.4]
\draw (0,0) to (0,2) to (-1.4,-1.4) to (0,0);
\draw (0,0) to (0,2) to (2,0) to (0,0);
\draw (0,0) to (-1.4,-1.4) to (2,0) to (0,0);
\node at (-0.3,-0.3) {$+$};
\node at (-0.6,-0.6) {$+$};
\node at (-0.9,-0.9) {$+$};
\node at (0,0.4) {$\times$};
\node at (0,0.8) {$\times$};
\node at (0,1.2) {$\times$};
\node at (0.5,0) {$\times$};
\node at (0.9,0) {$\times$};
\node at (1.3,0) {$\times$};
\end{tikzpicture}
\caption{The intersection complex for the degeneration of a cubic surface. There are 9 focus-focus singularities along the inner edges.}
\label{fintercomplexcubic}
\end{figure}

One could now proceed by resolving the singularities as in Section \ref{sExampleCubic}. For our purposes, we however already have enough information to read off the $(-1)$-curves from the dual picture, namely the dual intersection complex of Figure \ref{fdualintercomplexcubic}. Indeed, the $(-1)$-curves correspond to the lowest degree tropical curves of \cite{CPS10,Gra20}. These are formed by choosing two of the compact edges and on each edge a focus-focus singularity. From each of the singularities, a ray is emitted along the compact edge into the direction opposite to the third compact edge. When they meet, they combine according to the balancing condition to a ray that is identified with the corresponding ray of the fan. Clearly, there are 27 of them.

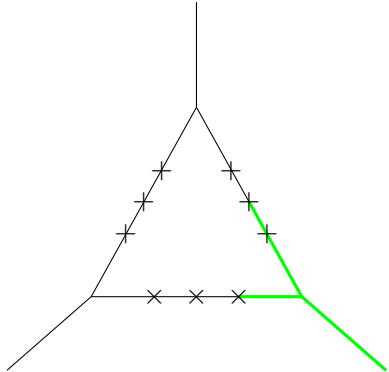
\begin{figure}[h]
\begin{tikzpicture}[smooth, scale=1.4]
\draw (-1,-1) to (1,-1) to (0,0.8) to (-1,-1);
\draw (-1.8,-1.7) to (-1,-1);
\draw (1,-1) to (1.8,-1.7);
\draw (0,0.8) to (0,1.8);
\draw[very thick, green] (0.5,-0.1) to (1,-1) to (0.4,-1);
\draw[very thick, green] (1,-1) to (1.8,-1.7);
\node at (-0.67,-0.4) {$+$};
\node at (-0.5,-0.1) {$+$};
\node at (-0.33,0.2) {$+$};
\node at (0.67,-0.4) {$+$};
\node at (0.5,-0.1) {$+$};
\node at (0.33,0.2) {$+$};
\node at (-0.4,-1) {$\times$};
\node at (0,-1) {$\times$};
\node at (0.4,-1) {$\times$};
\end{tikzpicture}
\caption{A sketch of the dual intersection complex for the degeneration of a cubic surface. Locally around the 3 vertices, the affine structure is given by the fan of $\P^2$. Each of the edges shared by two of the $\P^2$ has 3 focus-focus singularities on it. One of the 27 $(-1)$-curves is drawn in green. For the integral affine manifold with singularities (dual intersection complex) that correctly captures the cuts induced by the focus-focus singularities, see \cite[Figure 1.5(3a)]{Gra20}.}
\label{fdualintercomplexcubic}
\end{figure}

We turn to degenerations of abelian varieties, in particular the degeneration of $E\times E$ from Section \ref{sEllipticCurves}. They fit into the framework of degenerations of abelian varieties as studied by Mumford \cite{Mum72} and Alexeev \cite{Al02}. We follow the general framework by Gross-Hacking-Siebert \cite[Section 6]{GHS20}. While we do not give a full treatment, it will be clear how strictly semistable degenerations as in \cite[Section 6]{GHS20} can be used to study Chow groups of abelian varieties. Moreover, the examples below illustrate the concepts and results of this paper. The degenerations are encoded by combinatorial information on integral affine tori.

\begin{definition}
A differentiable manifold $B$ is an \emph{integral affine manifold} if its transition functions are in ${\rm Aff}(\Z^n):=\Z^n\rtimes{\rm GL}(\Z^n)$. The set $B(\frac{1}{d}\Z)\subseteq B$ of \emph{$1/d$-integral points} is given chartwise locally by the preimage of $\frac{1}{d}\Z^n\subseteq\R^n$. $B$ comes with a sheaf of integral tangent vectors $\Lambda$, resp.\ cotangent vectors $\check{\Lambda}$.

\end{definition}

Let $n\in\N$, let $M\cong\Z^n$ be a lattice and write $M_{\R} := M\otimes_\Z\R$. Let $\Gamma \subseteq M$ be a rank $n$ sublattice. Then the real $n$-torus $B:=M_{\R}/\Gamma$ inherits an integral affine structure from $M_{\R}$. Moreover, a $\Gamma$-periodic integral polyhedral decomposition $\overline{\PP}$ of $M_\R$ induces an integral polyhedral decomposition $\PP$ of $B$. We call the elements of $\PP$ the \emph{cells} of $B$. For a cell $\tau$, we may consider the restriction $\Lambda_\tau$, resp.\ $\check{\Lambda}_\tau$, of $\Lambda$, resp.\ $\check{\Lambda}$, to $\tau$.

This datum is enough to determine the central fiber $\XX_0$. Each cell $\tau$ is an integral polyhedron and hence determines a toric variety $\P_\tau$ and $\XX_0$ is given by gluing the $\P_\tau$ as prescribed by $\PP$.
Next we describe the deformation of $\XX_0$. We are interested in 1-parameter deformations and restrict to that generality here. In the notation of \cite{GHS20}, this means setting $Q=\N$.

\begin{definition}
A \emph{piecewise affine function} on an open set $U\subseteq B$ is a continuous map $U \to \R$ which restricts to an integral affine function on each maximal cell of $\PP$. The sheaf of piecewise affine functions is denoted by $\mathcal{PA}(B)$. %The sheaf $\mathcal{PL}(B)$ of \emph{piecewise linear functions} is the quotient of $\mathcal{PA}(B)$ by the locally constant functions. 
The sheaf $\mathcal{MPA}(B)$ of \emph{multivalued piecewise affine (MPA-) functions} on $B$ is the quotient of $\mathcal{PA}(B)$ by the sheaf of affine functions on $B$. 

\end{definition}

Let $\varphi$ be a MPA-function on $B$, i.e.\ a global section of $\mathcal{MPA}(B)$. Let $\rho$ be a codimension 1 cell separating two maximal cells $\sigma$, $\sigma'$ and let $P\in\rho$ be an interior point. Choose a primitive vector $v\in\Lambda_P$ that complements an integral basis of $\Lambda_{\rho,P}$ to an integral basis of $\Lambda_P$. Consider the integral span $L$ of $v$. We identify $L$ with $\Z$ by choosing the non-negative vectors to point into $\sigma'$. Choosing a representative, $\varphi$ induces a piecewise affine function $\varphi_P:L\otimes_\Z\R=\R\to\R$.
Denote by $n$, resp.\ $n'$, the slope of $\varphi_P$ on $\R_{\leq0}$, resp.\ $\R_{\geq0}$. The \emph{kink} of $\varphi$ along $\rho$ is defined to be $\kappa_\rho(\varphi):=n'-n\in\Z$.
As an MPA-function, $\varphi$ is determined by the set of kinks across codimension 1 cells. $\varphi$ is said to be \emph{convex} if all of its kinks are non-negative.

\begin{example}

Consider $\R^2$ with the standard integral structure and consider the polyhedral decomposition given by the diagonal $x=y$. Denote by $\sigma$, resp.\ $\sigma'$, the upper left, resp.\ bottom right, halfplane. Define the piecewise linear function $\varphi$ by setting $\varphi|_{\sigma}:=y$ and $\varphi|_{\sigma'}:=x$. Choose $P$ to be the origin. Then $L$ as above is for example the span of $(1,0)$ and $\varphi$ is seen to have kink 1.

\end{example}

Let $\tau$ be a cell of codimension 2 and let $\rho_1,\dots,\rho_k$ be the adjacent cells of codimension 1. We work in a chart around a vertex $P\in\tau$. For $1\leq i\leq k$, let $n_i\in\check{\Lambda}_P$ be a primitive cotangent vector normal to $\Lambda_{\rho_i}$. The signs of the $n_i$ are chosen by following a simple loop around the origin of $\Lambda_P\otimes_\Z\R / \Lambda_{\tau,P}\otimes_\Z\R $. The \emph{balancing condition} states that
\[
\sum_{i=1}^k \kappa_{\rho_i}(\varphi)\otimes n_i = 0.
\]
When the balancing condition is satisfied at each vertex $P$ of each codimension 2 cell $\tau$, $\varphi$ is locally single-valued.

While we give a global description of the family below, we can already write down local equations, which detect semistability. We denote the smoothing parameter by $t$. The family is defined order by order and is given over  ${\rm Spec}\, \C[t]/t^{k+1}$ for some $k$. To each cell $\tau$ we define a $\C[t]/t^{k+1}$-algebra $R_\tau$ and gluing morphisms according to \cite[Section 2.2]{GHS20}. The construction is explained by locally at $P$ taking the monoid $\widehat{M_P}$ which is the upper convex hall of the graph of $\varphi$, then the local rings are given as $\C[\widehat{M_P}]/t^{k+1}$, where $t$ is identified with $z^{0\oplus1}$, cf.\ below. We refer for details to \cite{GS11,GHS20} and only give an overview here. Note that there is no wall structure present, which greatly simplifies the construction. Locally, everything is toric and the gluing morphisms are given by identifying monomials in the coordinate rings of algebraic tori. The $\widehat{M_P}$ are the local monoids of the polytope $\Xi_\varphi$ below.

For a maximal cell $\sigma$, $R_\sigma:=\left(\C[t]/t^{k+1}\right)[\Lambda_\sigma]$ gives an algebraic torus over the base. At any point $P$ and for $v\oplus m\in\Lambda_P\oplus\Z$, we write $z^{v\oplus m}$ for the corresponding element in $\C[\Lambda_P\oplus\Z]=\C[\Lambda_P][t]$.
Let $P\in \rho$ be an interior point of a codimension 1 cell $\rho$ separating two cells $\sigma_+$ and $\sigma_-$. Let $v_+\in\Lambda_P$ be an integral vector pointing into $\sigma_+$ and completing an integral basis of $\Lambda_{\rho,P}$ to an integral basis of $\Lambda_{P}$. Denote by $v_-$ the opposite of $v_+$, pointing into $\sigma_-$ and write $Z_+=z^{v_+\oplus \,d\varphi_P(v_+)}$ and $Z_-=z^{v_-\oplus\,d\varphi_P(v_-)}$. Notice that $v_\pm\oplus \,d\varphi_P(v_\pm)\in\widehat{M_P}$ are elements of the boundary. Then
\[
R_\rho := \left(\C[t]/t^{k+1}\right)[\Lambda_\rho][Z_+,Z_-] \,{\big/}\left(Z_+Z_--t^{\kappa_\rho(\varphi)}\right),
\]
reflecting the fact that
\[
\left(v_+\oplus \,d\varphi_P(v_+)\right)+\left(v_-\oplus \,d\varphi_P(v_-)\right)=\kappa_\rho(\varphi)
\]
in $\widehat{M_P}$. Notice that compared to \cite[Equation (2.11)]{GHS20}, the wall-crossing functions $f_{\underline{\rho}}=1$ as we are only considering degenerations of abelian varieties.

We have just described the deformation family away from codimension $\geq 2$ cells. Looking at the shape of the equations, we conclude that in a neighborhood of an interior point of $\P_\rho$, the family is semistable. Notice that while the construction depends on choosing $v_+$, this dependency is removed when identifying $Z_+$ with the corresponding element in $R_{\sigma_+}$. Semistability at codimension $\geq 2$ cells is not guaranteed as the following discussion shows.

At points of cells in codimension 2, the equations depend both on the local structure of how the codimension 1 cells meet and the kinks. We describe two examples, which the reader will recognize from Section \ref{sEllipticCurves}.

\begin{example}\label{eexe}
Start with $M=\Z^2$ and let $\Gamma$ be generated by $(3,0)$ and $(0,3)$. Figure \ref{fpolydecomp2torus} gives a $\Gamma$-periodic polyhedral decomposition of the 2-torus. The central fiber is obtained by gluing copies of $\P^1\times\P^1$. For the deformation, we choose the kink to be 1 across all edges. Alternatively, we could have rescaled $\varphi$ by a factor of 2, which would correspond to choosing the kink to be 2 across all edges. The latter family is obtained by the base change $t\mapsto t^2$.

Locally around interior points of the edges, the family is given by $Z_+Z_-=t$. Locally around the vertex $(1,1)$, write $z_1=z^{(1,0)\oplus1}$, $z_2=z^{(-1,0)\oplus0}$, $z_3=z^{(0,1)\oplus1}$ and $z_4=z^{(0,-1)\oplus0}$. Then the local equation is $z_1z_2=z_3z_4=t$ and hence the family is not semistable.
\end{example}

\begin{figure}[h]
\begin{center}
\begin{tikzpicture}[smooth, scale=1.2]
\draw[step=1cm,gray,very thin] (-1.5,-1.5) grid (4.5,4.5);
\draw[thick] (0,0) to (3,0);
\draw[thick] (0,1) to (3,1);
\draw[thick] (0,2) to (3,2);
\draw[thick] (0,3) to (3,3);
\draw[thick] (0,0) to (0,3);
\draw[thick] (1,0) to (1,3);
\draw[thick] (2,0) to (2,3);
\draw[thick] (3,0) to (3,3);
\node at (0,0) {$\bullet$};
\node at (-0.3,-0.3) {$\scriptstyle{(0,0)}$};
\node at (3,0) {$\bullet$};
\node at (3.3,-0.3) {$\scriptstyle{(3,0)}$};
\node at (0,3) {$\bullet$};
\node at (-0.3,3.3) {$\scriptstyle{(0,3)}$};
\node at (3,3) {$\bullet$};
\node at (3.3,3.3) {$\scriptstyle{(3,3)}$};
\node at (0.5,0.5) {$\scriptstyle{1}$};
\node at (1.5,0.5) {$\scriptstyle{x}$};
\node at (0.5,1.5) {$\scriptstyle{y}$};
\node at (1.5,1.65) {$\scriptstyle{x+y}$};
\node at (1.5,1.35) {$\scriptstyle{-1}$};
\node at (2.5,0.5) {$\scriptstyle{2x-2}$};
\node at (0.5,2.5) {$\scriptstyle{2y-2}$};
\node at (2.5,1.65) {$\scriptstyle{2x+y}$};
\node at (2.5,1.35) {$\scriptstyle{-3}$};
\node at (1.5,2.65) {$\scriptstyle{x+2y}$};
\node at (1.5,2.35) {$\scriptstyle{-3}$};
\node at (2.5,2.65) {$\scriptstyle{2x+2y}$};
\node at (2.5,2.35) {$\scriptstyle{-5}$};
\end{tikzpicture}
\end{center}
\caption{A product polyhedral decomposition of the 2-torus with a representative of a convex MPA-function determined by choosing the kink to be 1 across all edges. The balancing condition is satisfied and the MPA-function is locally single-valued.}
\label{fpolydecomp2torus}
\end{figure}
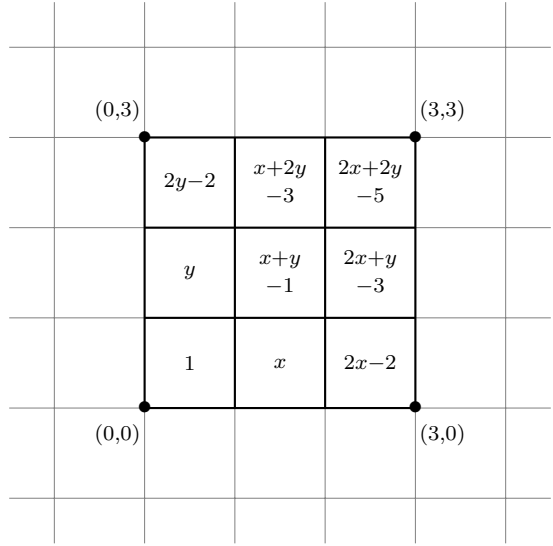

%\begin{figure}[h]
%\begin{center}
%\begin{tikzpicture}[smooth, scale=0.8]
%\draw[step=1cm,gray,very thin] (-1.5,-1.5) grid (7.5,7.5);
%\draw[thick] (0,0) to (6,0);
%\draw[thick] (0,2) to (6,2);
%\draw[thick] (0,4) to (6,4);
%\draw[thick] (0,6) to (6,6);
%\draw[thick] (0,0) to (0,6);
%\draw[thick] (2,0) to (2,6);
%\draw[thick] (4,0) to (4,6);
%\draw[thick] (6,0) to (6,6);
%\node at (0,0) {$\bullet$};
%\node at (-0.3,-0.3) {$\scriptstyle{(0,0)}$};
%\node at (6,0) {$\bullet$};
%\node at (6.3,-0.3) {$\scriptstyle{(6,0)}$};
%\node at (0,6) {$\bullet$};
%\node at (-0.3,6.3) {$\scriptstyle{(0,6)}$};
%\node at (6,6) {$\bullet$};
%\node at (6.3,6.3) {$\scriptstyle{(6,6)}$};
%\end{tikzpicture}
%\end{center}
%\caption{A polyhedral decomposition of the 2-torus coming from }
%\label{fpolydecomp2torus}
%\end{figure}

\begin{example}\label{eexestrict}
Start with $M=\Z^2$ and let $\Gamma$ be generated by $(6,3)$ and $(3,6)$. Figure \ref{fstrictlypolydecomp2torus} gives a $\Gamma$-periodic polyhedral decomposition of the 2-torus. The central fiber is given by gluing copies of degree 6 del Pezzo surfaces. For the deformation, we choose the kink to be 1 across all edges.

Locally around interior points of the edges, the family is given by $Z_+Z_-=t$. Locally around the vertex $(2,2)$, write $z_1=z^{(-1,0)\oplus0}$, $z_2=z^{(0,-1)\oplus0}$ and $z_3=z^{(1,1)\oplus1}$. Then the local equation is $z_1z_2z_3=t$ and similarly for other vertices. We conclude that the family is semistable.
\end{example}

\begin{figure}[h]
\begin{center}
\begin{tikzpicture}[smooth, scale=0.9]
\draw[step=1cm,gray,very thin] (-2.5,-2.5) grid (9.5,9.5);
\draw[thick,red] (-1,-1) to (0,-1);
\draw[thick] (0,-1) to (1,0) to (2,0) to (3,1) to (4,1);
\draw[thick,green] (4,1) to (5,2);
\draw[thick,blue] (5,2) to (5,3);
\draw[thick,red] (5,2) to (6,2);
\draw[thick,blue] (-1,-1) to (-1,0);
\draw[thick] (-1,0) to (0,1) to (1,1) to (2,2) to (3,2) to (4,3) to (5,3);
\draw[thick] (0,1) to (0,2) to (1,3) to (2,3) to (3,4) to (4,4) to (5,5) to (6,5) to (6,4) to (5,3);
\draw[thick] (1,3) to (1,4);
\draw[thick,green] (1,4) to (2,5);
\draw[thick,red] (2,5) to (3,5);
\draw[thick,blue] (2,5) to (2,6);
\draw[thick] (3,5) to (4,6) to (5,6) to (6,7) to (7,7) to (7,6) to (6,5);
\draw[thick] (1,0) to (1,1);
\draw[thick] (3,1) to (3,2);
\draw[thick] (2,2) to (2,3);
\draw[thick] (4,3) to (4,4);
\draw[thick] (3,4) to (3,5);
\draw[thick] (5,5) to (5,6);
\draw[thick,green] (7,7) to (8,8);
\draw[thick,red] (8,8) to (9,8);
\draw[thick,blue] (8,8) to (8,9);
\draw[thick,green] (-2,-2) to (-1,-1);
\node at (-1,-1) {$\bullet$};
\node at (-0.6,-0.7) {$\scriptstyle{(0,0)}$};
\node at (5,2) {$\bullet$};
\node at (5.4,2.3) {$\scriptstyle{(6,3)}$};
\node at (2,5) {$\bullet$};
\node at (2.4,4.7) {$\scriptstyle{(3,6)}$};
\node at (8,8) {$\bullet$};
\node at (8.4,8.3) {$\scriptstyle{(9,9)}$};
\node at (0.1,0.2) {$\scriptstyle{2}$};
\node at (2.1,1.2) {$\scriptstyle{x}$};
\node at (1.1,2.2) {$\scriptstyle{y}$};
\node at (4.1,2.2) {$\scriptstyle{2x-4}$};
\node at (2.1,4.2) {$\scriptstyle{2y-4}$};
\node at (3.1,3.2) {$\scriptstyle{x+y-3}$};
\node at (5.1,4.2) {$\scriptstyle{2x+y-8}$};
\node at (4.1,5.2) {$\scriptstyle{x+2y-8}$};
\node at (6.2,6.2) {$\scriptstyle{2x+2y-14}$};
\draw [<->,red] (0.7,3.5) to [out=160,in=40] (7.3,6.8);
\draw [<->,red] (-0.9,-1.3) to [out=210,in=80] (2.2,5.2);
\end{tikzpicture}
\end{center}
\caption{A polyhedral decomposition of the 2-torus with a representative of a convex MPA-function. The kink is 1 across horizontal or vertical edges, and 2 across diagonal edges, which determines the smoothing. The balancing condition is satisfied and the MPA-function is locally single-valued.}
\label{fstrictlypolydecomp2torus}
\end{figure}
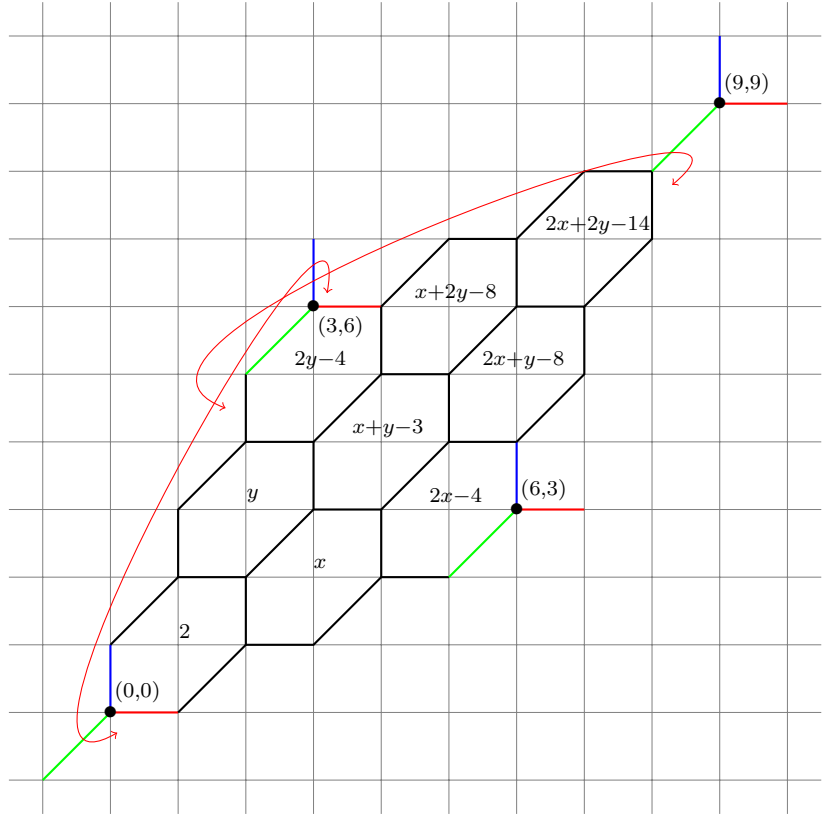

Next, we describe the global construction of \cite[Section 6]{GHS20} yielding a formal family $\widehat{\XX}\to{\rm Spf} \C[[t]]$ as the quotient of a non-finite type fan.
We will see how to detect the prelog 1-cycles in Example \ref{eexestrict} as well as how Example \ref{eexestrict} is a logarithmic modification of Example \ref{eexe}.

For $\gamma\in\Gamma$, the $\Gamma$-periodicity of $\varphi$ states that there is an affine linear function $\alpha_\gamma$ such that
\[
\varphi(x+\gamma)=\varphi(x)+\alpha_\gamma(x).
\]
Start with the polytope
\[
\Xi_\varphi := \left\{ (m, \varphi(m) + q) \, \big{|} \, q \in \R_{\geq0}  \right\} \subseteq M_\R \times \R,
\]
on which $\gamma\in\Gamma$ acts via
\[
(m,q) \mapsto (m+\gamma,q+\alpha_\gamma(m)).
\]
Next, consider the normal fan $\Sigma$ to $\Xi_\varphi$. $\Sigma$ is a fan in $N_\R\times\R$ with support contained in $N_\R\times\R_{\geq0}$, where $N={\rm Hom}(M,\Z)$. $\Sigma$ admits a natural map to the fan $\R_{\geq0}$ of $\A^1$ given by the projection to the last coordinate. $\Sigma$ is the cone over the faces $\check{\sigma}$ of the dual polyhedral decomposition $\check{\PP}$ of $N_\R$. Denoting by $X_\Sigma$ its associated toric variety, which is not of finite type, we thus have a flat morphism
\[
f: X_\Sigma \to {\rm Spec} \, \C[t].
\]
The generic fiber is the algebraic $n$-torus. Each ray of $\Sigma$, dual to a maximal cell $\sigma$ of $B$, corresponds to the component $\P_\sigma$ of the central fiber. More precisely, for $\sigma$ a maximal cell of $B$,
\[
\left\{  (m, \varphi(m) \, \big{|} \, m\in\sigma \right\}
\]
is the corresponding maximal face of $\Xi_\varphi$, and the primitive normal vector to this face is $\left(-d(\varphi|_\sigma)^t(1),1\right)$. From this description, we see that Example \ref{eexe} is the product family of a degeneration of an elliptic curve into a cycle of three $\P^1$. It is also clear that scaling $\varphi$ by a factor $r$ corresponds to dilating $\check{\PP}$ by $r$.

\begin{figure}[h]
\begin{center}
\begin{tikzpicture}[smooth, scale=0.6]
\draw[step=1cm,gray,very thin] (-0.5,-0.5) grid (3.5,3.5);
\draw[thick] (-0.5,0) to (3.5,0);
\draw[thick] (-0.5,1) to (3.5,1);
\draw[thick] (-0.5,2) to (3.5,2);
\draw[thick] (-0.5,3) to (3.5,3);
\draw[thick] (0,-0.5) to (0,3.5);
\draw[thick] (1,-0.5) to (1,3.5);
\draw[thick] (2,-0.5) to (2,3.5);
\draw[thick] (3,-0.5) to (3,3.5);
\node at (0,0) {$\bullet$};
\node at (1,0) {$\bullet$};
\node at (2,0) {$\bullet$};
\node at (3,0) {$\bullet$};
\node at (0,1) {$\bullet$};
\node at (1,1) {$\bullet$};
\node at (2,1) {$\bullet$};
\node at (3,1) {$\bullet$};
\node at (0,2) {$\bullet$};
\node at (1,2) {$\bullet$};
\node at (2,2) {$\bullet$};
\node at (3,2) {$\bullet$};
\node at (0,3) {$\bullet$};
\node at (1,3) {$\bullet$};
\node at (2,3) {$\bullet$};
\node at (3,3) {$\bullet$};
\draw [<->,red] (-0.7,3) to [out=160,in=20] (3.7,3);
\draw [<->,red] (0,-0.7) to [out=-110,in=110] (0,3.7);
\node at (0,-3.2) {$\quad$};
\end{tikzpicture}
\begin{tikzpicture}[smooth, scale=0.6]
\draw[step=1cm,gray,very thin] (-0.5,-0.5) grid (6.5,6.5);
\draw[thick] (-0.5,0) to (6.5,0);
\draw[thick] (-0.5,2) to (6.5,2);
\draw[thick] (-0.5,4) to (6.5,4);
\draw[thick] (-0.5,6) to (6.5,6);
\draw[thick] (0,-0.5) to (0,6.5);
\draw[thick] (2,-0.5) to (2,6.5);
\draw[thick] (4,-0.5) to (4,6.5);
\draw[thick] (6,-0.5) to (6,6.5);
\node at (0,0) {$\bullet$};
\node at (2,0) {$\bullet$};
\node at (4,0) {$\bullet$};
\node at (6,0) {$\bullet$};
\node at (0,2) {$\bullet$};
\node at (2,2) {$\bullet$};
\node at (4,2) {$\bullet$};
\node at (6,2) {$\bullet$};
\node at (0,4) {$\bullet$};
\node at (2,4) {$\bullet$};
\node at (4,4) {$\bullet$};
\node at (6,4) {$\bullet$};
\node at (0,6) {$\bullet$};
\node at (2,6) {$\bullet$};
\node at (4,6) {$\bullet$};
\node at (6,6) {$\bullet$};
\draw [<->,red] (-0.7,6) to [out=160,in=20] (6.7,6);
\draw [<->,red] (0,-0.7) to [out=-110,in=110] (0,6.7);
\end{tikzpicture}
\end{center}
\caption{The dual intersection complex of two degenerations of $E\times E$ differing by base change. Each vertex corresponds to a copy of $\P^1\times\P^1$ in the central fiber.
Scaling $\varphi$ by a factor of 2 on $B$ corresponds to dilating the decomposition by a factor of 2. Locally, the family is given by the cone over the dual intersection complex.}
\end{figure}
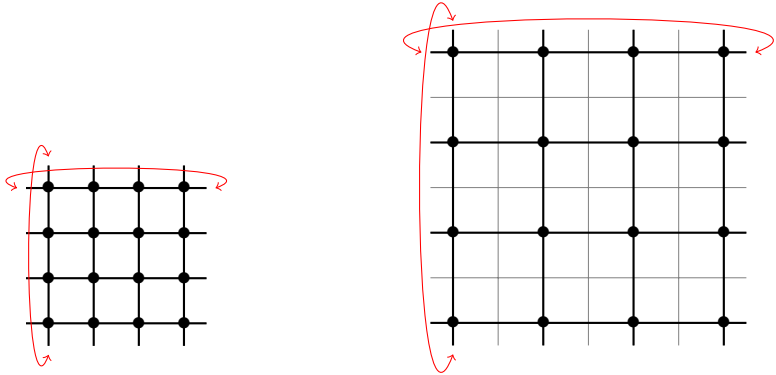

Denote by $\Sigma'$, resp.\ $\Sigma$, the fan for the family of Example \ref{eexe}, resp.\ Example \ref{eexestrict} and by $\check{\PP}'$, resp.\ $\check{\PP}$, the respective polyhedral decompositions of $N_\R$. Then $\check{\PP}$ is obtained from $\check{\PP}'$ by refining the decomposition, i.e.\ $\Sigma$ is a refinement of $\Sigma'$. In other words, the family $X_\Sigma$ is a log modification of the family $X_{\Sigma'}$.

\begin{figure}[h]
\begin{center}
\begin{tikzpicture}[smooth, scale=1.2]
\draw[step=1cm,gray,very thin] (-0.5,-0.5) grid (3.5,3.5);
\draw[thick] (-0.5,0) to (3.5,0);
\draw[thick] (-0.5,1) to (3.5,1);
\draw[thick] (-0.5,2) to (3.5,2);
\draw[thick] (-0.5,3) to (3.5,3);
\draw[thick] (0,-0.5) to (0,3.5);
\draw[thick] (1,-0.5) to (1,3.5);
\draw[thick] (2,-0.5) to (2,3.5);
\draw[thick] (3,-0.5) to (3,3.5);
\draw[thick] (-0.3,0.3) to (0.3,-0.3);
\draw[thick] (-0.3,1.3) to (1.3,-0.3);
\draw[thick] (-0.3,2.3) to (2.3,-0.3);
\draw[thick] (-0.3,3.3) to (3.3,-0.3);
\draw[thick] (0.7,3.3) to (3.3,0.7);
\draw[thick] (1.7,3.3) to (3.3,1.7);
\draw[thick] (2.7,3.3) to (3.3,2.7);
\node at (0,0) {$\bullet$};
\node at (0.3,0.2) {$\scriptstyle{(0,0)}$};
\node at (1,0) {$\bullet$};
\node at (2,0) {$\bullet$};
\node at (3,0) {$\bullet$};
\node at (3.3,0.2) {$\scriptstyle{(3,0)}$};
\node at (0,1) {$\bullet$};
\node at (1,1) {$\bullet$};
\node at (2,1) {$\bullet$};
\node at (3,1) {$\bullet$};
\node at (0,2) {$\bullet$};
\node at (1,2) {$\bullet$};
\node at (2,2) {$\bullet$};
\node at (3,2) {$\bullet$};
\node at (0,3) {$\bullet$};
\node at (0.3,3.2) {$\scriptstyle{(0,3)}$};
\node at (1,3) {$\bullet$};
\node at (2,3) {$\bullet$};
\node at (3.3,3.2) {$\scriptstyle{(3,3)}$};
\node at (3,3) {$\bullet$};
\draw [<->,red] (-0.7,3) to [out=160,in=20] (3.7,3);
\draw [<->,red] (0,-0.7) to [out=-110,in=110] (0,3.7);
\end{tikzpicture}
\end{center}
\caption{The dual intersection complex of the semistable degeneration of $E\times E$ of Example \ref{eexestrict}. The location of the vertices are read off from the slopes of $\varphi$. The resulting family is a log modification of the family of Example \ref{eexe}.}
\label{fdicexestrict}
\end{figure}
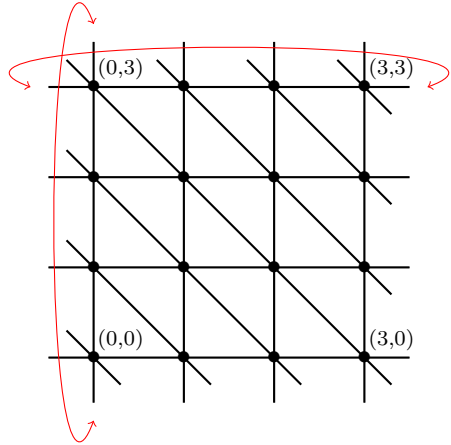

The action of $\Gamma$ on $\Xi_\varphi$ induces an action of $\Gamma$ on $\Sigma$ by
\[
\gamma : (n,p) \mapsto \left(n+(d\alpha_\gamma)^t(p),p\right).
\]
The quotient
\[
\left( X_\Sigma \times_{{\rm Spec} \, \C[t]} {\rm Spec} \, \C[t]/t^{k+1} \right) \big{/} \, \Gamma \to {\rm Spec} \, \C[t]/t^{k+1}
\]
gives the degeneration of abelian varieties $\XX \to {\rm Spec} \, \C[t]/t^{k+1}$  over a thickened point. Using \cite[Theorem 4.4]{RS20}, it can be extended to an analytic family.

Next, we describe the prelog 1-cycles of Example \ref{eexestrict} from the dual intersection complex of Figure \ref{fdicexestrict}. Denote by $\Sigma$ the corresponding fan. The cone over the dual intersection complex is actually the tropicalization of the family. Moreover, we can look at the tropicalization of a family of curves in the family. %in the general fiber will degenerate to a prelog curve in the special fiber.
By functoriality, the tropicalization of the family of curves maps to the dual intersection complex.
In other words, a family of curves limits to both prelog 1-cycles in the prelog Chow group of the central fibre, and tropical curves in the dual intersection complex. A priori, tropical curves are decorated by homology classes at their vertices, but since the components of the central fiber are toric, its Chow groups are identified with singular homology.

Consider a family of curves in the degeneration that over a very general fibre specialises to a generator of ${\rm CH}_1(E\times E)$. This family determines both an element of the prelog Chow group of the central fibre, and a tropical curves. In fact, let $\Sigma_E$ be the fan in $\R^2$ whose rays are generated by $(m,1)$ for $m\in\Z$, which by projection to the second component admits a natural morphism to the fan of $\A^1$. Taking the quotient by the action that identifies the rays $\overline{(m,1)}$ and $\overline{(m',1)}$ if $m-m'\in3\Z$, we obtain the degeneration of a genus 1 curve into a cycle of three $\P^1$. At the level of the fans, we have three natural maps inducing vertical, horizontal and diagonal morphisms $X_{\Sigma'}\to X_\Sigma$, which descend to the quotient. See Figure \ref{fdegofcurves} for the corresponding tropical curve.

\begin{figure}[h]
\begin{center}
\begin{tikzpicture}[smooth, scale=1.2]
\draw[step=1cm,gray,very thin] (-0.5,-0.5) grid (3.5,3.5);
\draw[thick] (-0.5,0) to (3.5,0);
\draw[thick] (-0.5,1) to (3.5,1);
\draw[thick] (-0.5,2) to (3.5,2);
\draw[thick] (-0.5,3) to (3.5,3);
\draw[thick] (0,-0.5) to (0,3.5);
\draw[thick] (1,-0.5) to (1,3.5);
\draw[thick] (2,-0.5) to (2,3.5);
\draw[thick] (3,-0.5) to (3,3.5);
\draw[thick] (-0.3,0.3) to (0.3,-0.3);
\draw[thick] (-0.3,1.3) to (1.3,-0.3);
\draw[thick] (-0.3,2.3) to (2.3,-0.3);
\draw[thick] (-0.3,3.3) to (3.3,-0.3);
\draw[thick] (0.7,3.3) to (3.3,0.7);
\draw[thick] (1.7,3.3) to (3.3,1.7);
\draw[thick] (2.7,3.3) to (3.3,2.7);
\node at (0,0) {$\bullet$};
\node at (0.3,0.2) {$\scriptstyle{(0,0)}$};
\node at (1,0) {$\bullet$};
\node at (2,0) {$\bullet$};
\node at (3,0) {$\bullet$};
\node at (3.3,0.2) {$\scriptstyle{(3,0)}$};
\node at (0,1) {$\bullet$};
\node at (1,1) {$\bullet$};
\node at (2,1) {$\bullet$};
\node at (3,1) {$\bullet$};
\node at (0,2) {$\bullet$};
\node at (1,2) {$\bullet$};
\node at (2,2) {$\bullet$};
\node at (3,2) {$\bullet$};
\node at (0,3) {$\bullet$};
\node at (0.3,3.2) {$\scriptstyle{(0,3)}$};
\node at (1,3) {$\bullet$};
\node at (2,3) {$\bullet$};
\node at (3.3,3.2) {$\scriptstyle{(3,3)}$};
\node at (3,3) {$\bullet$};
\draw [<->,red] (-0.7,3) to [out=160,in=20] (3.7,3);
\draw [<->,red] (0,-0.7) to [out=-110,in=110] (0,3.7);
\draw[thick] (-4,-0.5) to (-4,3.5);
\node at (-4,0) {$\bullet$};
\node at (-4,1) {$\bullet$};
\node at (-4,2) {$\bullet$};
\node at (-4,3) {$\bullet$};
\draw [<->,red] (-4,-0.7) to [out=-110,in=110] (-4,3.7);
\draw[->, line width=0.5mm] (-3.3,1.5) -- (-1,1.5);
\end{tikzpicture}
\end{center}
\caption{A morphism of dual intersection complexes induced by a morphism of fans, giving all generators of $\rm{CH}^{\rm prelog}_1(\XX_0)$.}
\label{fdegofcurves}
\end{figure}
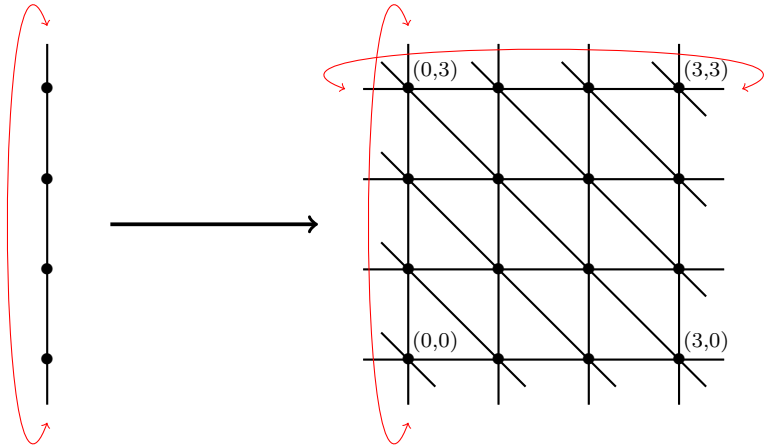

The behavior of the prelog Chow ring of this family under base change is informative.
Let $C=\mathrm{Spec}\, \C [[t]]$, $C' = \mathrm{Spec}\, \C [[s]]$, and consider the base change $C'\to C$ given by $t=s^r$ for some positive integer $r$. Denote by $\VV\to C$ the degeneration of the elliptic curve $E$ into a cycle of 3 lines. Denote by $\XX$ the semistable modification given by the quotient of the fan of Figure \ref{fdicexestrict}, with central fiber $X$:
\[
\xymatrix{
\XX \ar[r] \ar[rd] & \VV \times_C \VV \ar[d] \\
 & C 
}
\]
Then we can consider the base change
\[
\xymatrix{
\XX' = \XX\times_C C'\ar[d] \ar[r] & \XX \ar[d]\\
C' \ar[r] & C 
}
\]
which corresponds to multiplying the kinks of Figure \ref{fstrictlypolydecomp2torus} by a factor of $r$ and to dilating the dual intersection complex of Figure \ref{fdicexestrict} by a factor of $r$. We see that the total family is not smooth. This is remedied by subdividing further in the natural way, see Figure \ref{fdicexestrictbasechange} for the case $r=2$, leading to a new central fiber $X''_r$. Then $X''_r$ consists of $9 r^2$ copies of the degree 6 del Pezzo surface glued together in a way analogous to Figure \ref{fstrictlypolydecomp2torus}. Clearly no new prelog cycles have been introduced.

\begin{figure}[h]
\begin{center}
\begin{tikzpicture}[smooth, scale=1.9]
%\draw[step=1cm,gray,very thin] (-0.5,-0.5) grid (3.5,3.5);
\draw[very thick] (-0.25,0) to (3.25,0);
\draw[very thick] (-0.25,1) to (3.25,1);
\draw[very thick] (-0.25,2) to (3.25,2);
\draw[very thick] (-0.25,3) to (3.25,3);
\draw[very thick] (0,-0.25) to (0,3.25);
\draw[very thick] (1,-0.25) to (1,3.25);
\draw[very thick] (2,-0.25) to (2,3.25);
\draw[very thick] (3,-0.25) to (3,3.25);
\draw[very thick] (-0.2,0.2) to (0.2,-0.2);
\draw[very thick] (-0.2,1.2) to (1.2,-0.2);
\draw[very thick] (-0.2,2.2) to (2.2,-0.2);
\draw[very thick] (-0.2,3.2) to (3.2,-0.2);
\draw[very thick] (0.8,3.2) to (3.2,0.8);
\draw[very thick] (1.8,3.2) to (3.2,1.8);
\draw[very thick] (2.8,3.2) to (3.2,2.8);
\draw[thick] (-0.25,0.5) to (3.25,0.5);
\draw[thick] (-0.25,1.5) to (3.25,1.5);
\draw[thick] (-0.25,2.5) to (3.25,2.5);
\draw[thick] (0.5,-0.25) to (0.5,3.25);
\draw[thick] (1.5,-0.25) to (1.5,3.25);
\draw[thick] (2.5,-0.25) to (2.5,3.25);
\draw[thick] (-0.2,0.7) to (0.7,-0.2);
\draw[thick] (-0.2,1.7) to (1.7,-0.2);
\draw[thick] (-0.2,2.7) to (2.7,-0.2);
\draw[thick] (0.3,3.2) to (3.2,0.3);
\draw[thick] (1.3,3.2) to (3.2,1.3);
\draw[thick] (2.3,3.2) to (3.2,2.3);
\node at (0,0) {$\bullet$};
\node at (0.2,0.1) {$\scriptstyle{(0,0)}$};
\node at (1,0) {$\bullet$};
\node at (2,0) {$\bullet$};
\node at (3,0) {$\bullet$};
\node at (3.2,0.1) {$\scriptstyle{(6,0)}$};
\node at (0,1) {$\bullet$};
\node at (1,1) {$\bullet$};
\node at (2,1) {$\bullet$};
\node at (3,1) {$\bullet$};
\node at (0,2) {$\bullet$};
\node at (1,2) {$\bullet$};
\node at (2,2) {$\bullet$};
\node at (3,2) {$\bullet$};
\node at (0,3) {$\bullet$};
\node at (0.2,3.1) {$\scriptstyle{(0,6)}$};
\node at (1,3) {$\bullet$};
\node at (2,3) {$\bullet$};
\node at (3.2,3.1) {$\scriptstyle{(6,6)}$};
\node at (3,3) {$\bullet$};
\draw [<->,red] (-0.35,3) to [out=160,in=20] (3.35,3);
\draw [<->,red] (0,-0.35) to [out=-110,in=110] (0,3.35);
\end{tikzpicture}
\end{center}
\caption{Base change corresponds to a dilation and resolution to a subdivision.}
\label{fdicexestrictbasechange}
\end{figure}
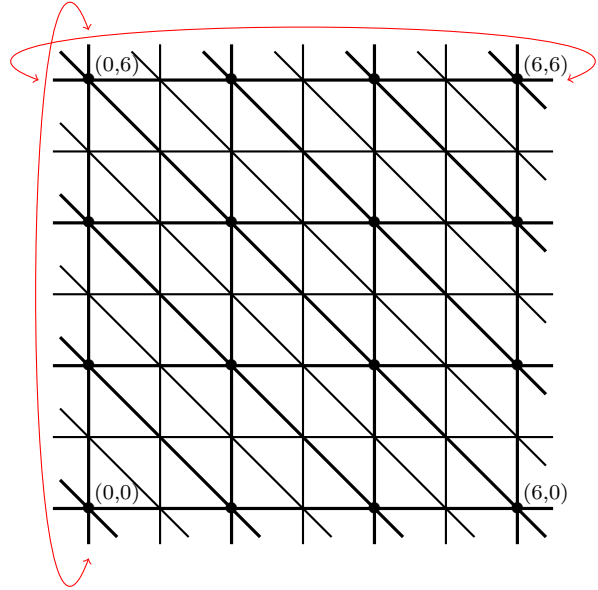

In higher dimensions, using the same method, we can describe specializations of cycles that are lower-dimensional abelian varieties, as torically described prelog cycles of the special fiber. Here we use the basic fact that toric subvarieties of toric varieties can be read off from the fan.

For a Mumford degeneration of an abelian surface, the \emph{tropical lifting problem} is studied by Nishinou in \cite{Ni20} with a full solution in the case of 3-valent tropical curves. This is the question whether a tropical curve in the dual intersection complex can be lifted to a family of degenerating holomorphic curves with tropicalization the given tropical curve. Studying curves by their tropicalization is of course a much finer classification problem than studying them by their class in the prelog Chow group.

In all dimensions, having constructed the family, \cite{GHS20} obtain sections of (powers of) the ample line bundle that the construction comes with. These are the \emph{theta functions}, which up to a factor agree with the classical theta functions. The theta functions are indexed by $B(\frac{1}{d}\Z)$ and their multiplication is given by the multiplication of broken lines. A careful analysis reveals the equations of the family in a high dimensional projective space. We refer the reader to \cite{GHS20} for details.

\providecommand{\bysame}{\leavevmode\hbox to3em{\hrulefill}\thinspace}
\providecommand{\href}[2]{#2}

\end{document}